\let\cal\mathscr
\newcommand \Om {\Omega}
\newcommand \om {\omega}
\renewcommand \i {\sqrt{-1}}
\renewcommand \leq {\leqslant}
\renewcommand \geq {\geqslant}
\newcommand \delt {\partial_t}
\DeclareMathOperator{\Vol}{Vol}
\DeclareMathOperator{\End}{End}
\DeclareMathOperator{\Tr}{Tr}
\DeclareMathOperator{\Ker}{Ker}
\DeclareMathOperator{\rk}{rk}
\DeclareMathOperator{\Spec}{Spec}
\DeclareMathOperator{\Td}{Td}
\DeclareMathOperator{\ch}{ch}
\DeclareMathOperator{\Supp}{Supp}
\newcommand \< {\mathcal{h}}
\renewcommand \> {\mathcal{i}}
\newcommand \cinf {\CC^\infty}
\newcommand \Id {{\rm Id}}
\renewcommand \epsilon {\varepsilon}
\newcommand \CC {{\cal C}}
\newcommand \HH {{\cal H}}
\newcommand \JJ {{\cal J}}
\newcommand \K {{\mathcal K}}
\newcommand \Tau {\mathcal T}
\def\cL{\mathscr{L}}
\def\Im{{\rm Im}}
\def\cL{\mathscr{L}}
\def\cT{\mathscr{T}}
\newcommand{\til}[1]{\widetilde{#1}}
\newcommand \dt {\frac{\partial}{\partial t}}
\newcommand \D[1] {\frac{\partial}{\partial #1}}
\newcommand \Dk[2] {\frac{\partial^{#1}}{\partial {{#2}^{#1}}}}
\newcommand \R {\mathbb R}
\newcommand \C {\mathbb C}
\newcommand \N {\mathbb N}
\newcommand \Z {\mathbb Z}
\newcommand \fl {\rightarrow}
\newcommand \ignore[1] {}
\theoremstyle{plain}
\newtheorem{theorem}{Theorem}[section]
\newtheorem{lem}[theorem]{Lemma}
\newtheorem{cor}[theorem]{Corollary}
\newtheorem{prop}[theorem]{Proposition}
\theoremstyle{definition}
\newtheorem*{ackn*}{Acknowledgements}
\newtheorem{defi}[theorem]{Definition}
\numberwithin{equation}{section}
\crefname{equation}{}{}
\crefname{lem}{Lemma}{Lemmas}
\crefname{theorem}{Theorem}{Theorems}
\crefname{cor}{Corollary}{Corollaries}
\crefname{ex}{Example}{Examples}
\crefname{defi}{Definition}{Definitions}
\crefname{prop}{Proposition}{Propositions}
\crefname{section}{Section}{Sections}
\crefname{subsection}{Section}{Sections}
\crefname{rmk}{Remark}{Remarks}
\crefname{nota}{Notation}{Notations}
\begin{document}

\title{\bf{Geometric quantization of Hamiltonian flows and
the Gutzwiller trace formula}}
\author{Louis IOOS$^1$}
\date{}
\maketitle

\begin{abstract}

We use the theory of Berezin--Toeplitz operators of Ma and
Marinescu to study the quantum Hamiltonian dynamics associated with
classical Hamiltonian flows over closed
prequantized symplectic manifolds
in the context of geometric quantization of Kostant and Souriau.
We express the associated evolution operators via
parallel transport in the quantum spaces
over the induced path of almost complex
structures, and we establish various semi-classical estimates.
In particular, we establish a Gutzwiller trace formula for the
Kostant--Souriau operator and compute explicitly the leading term.
We then describe a potential application to contact topology.
\end{abstract}

\footnotetext[1]{Partially supported by the European Research Council Starting grant 757585}

\section{Introduction}
\label{intro}

Given a classical phase space $X$, the goal of quantization is
to produce a Hilbert space $\HH$ of quantum
states, such that the classical dynamics
over $X$, described as flows of diffeomorphisms, are mapped in a
natural way to the associated quantum dynamics of $\HH$,
described as $1$-parameter families of unitary operators.
In the context of \emph{geometric quantization}, introduced
independently by Kostant \cite{Kos70} and Souriau \cite{Sou70},
the classical phase space is represented by a $2n$-dimensional
symplectic manifold $(X,\om)$ without boundary, endowed
with a Hermitian line bundle $(L,h^L)$ together with a
Hermitian connection
$\nabla^L$ with curvature $R^L$
satisfying the following \emph{prequantization condition},
\begin{equation}\label{preq}
\om=\frac{\sqrt{-1}}{2\pi} R^L\,.
\end{equation}
The construction of an associated Hilbert space of quantum states
depends on the extra data of a \emph{polarization},
and the best choice of such
a polarization usually depends on the physical situation at hand.
In particular, the classical dynamics of a symplectic manifold
$(X,\om)$ is entirely determined by a
\emph{time-dependent Hamiltonian} $F\in\cinf(\R\times X,\R)$,
and the corresponding quantum dynamics are sometimes much easier
to infer for one specific choice of polarization.
For the general theory as well as numerous examples,
we refer to the classical book of Woodhouse
\cite[Chap.\,5,\,Chap.\,9]{Woo92}.

In this paper, we focus our attention on \emph{compact}
symplectic manifolds, and consider the polarization induced
by an almost complex structure $J\in\End(TX)$ compatible with $\om$,
which always exists. The associated Hilbert space $\HH_p$ of
quantum states will depend of an integer $p\in\N$, representing
a \emph{quantum number}, and the goal of this paper is to study
the behaviour of the quantum dynamics associated with the
classical \emph{Hamiltonian flow} of
$F\in\cinf(\R\times X,\R)$ as $p$ tends to infinity. This limit
is called the \emph{semi-classical limit}, when the scale gets
so large that we recover the laws of classical mechanics as an
approximation of the laws of quantum mechanics. We show in
particular that the quantum dynamics approximate the corresponding
classical dynamics as $p\fl+\infty$.

The construction we present in this paper holds for
any compact prequantized symplectic manifold,
and coincides with the \emph{holomorphic
quantization} of Kostant and Souriau in
the particular case when the almost complex structure $J\in\End(TX)$ is
\emph{integrable}, making $(X,J,\om)$ into a \emph{Kähler manifold}.
Then $(L,h^L)$ admits a natural holomorphic structure for which
$\nabla^L$ is its \emph{Chern connection}, and the space $\HH_p$ of
quantum states coincides with the associated space of \emph{holomorphic
sections} of the $p$th tensor power $L^p:=L^{\otimes p}$,
for all $p\in\N$ big enough. 
The natural \emph{$L^2$-Hermitian product} \cref{L2}
on the space $\cinf(X,L^p)$ of smooth sections of $L^p$ then gives
$\HH_p$ the structure of a Hilbert space.
In the very restrictive case when the Hamiltonian flow acts
by biholomorphisms on $(X,L)$,
the quantum dynamics is simply given by the induced action on
the space of holomorphic sections $\HH_p$ for all $p\in\N$.
In contrast, our results apply to the holomorphic quantization
of general Hamiltonian flows.

In \cref{setting}, we consider a general almost complex structure
$J\in\End(TX)$ compatible with $\om$, and for all $p\in\N$,
we define in \cref{ahs} the space $\HH_p$
of quantum states as the direct sum of eigenspaces associated with the
small eigenvalues of the following \emph{renormalized Bochner Laplacian}
\begin{equation}\label{Deltapintro}
\Delta_p:=\Delta^{L^p}-2\pi np\,,
\end{equation}
acting on the smooth sections $\cinf(X,L^p)$ of $L^p$, where $\Delta^{L^p}$ is the usual Bochner Laplacian of $(L^p,h^{L^p})$
associated with the Riemannian metric $g^{TX}:=\om(\cdot,J\cdot)$.
This follows an idea of Guillemin and Uribe \cite{GU88}, and we consider
here a more general construction due to Ma and Marinescu \cite{MM08a},
where $L^p$ is replaced by $E\otimes L^p$ for all $p\in\N$,
for any
Hermitian vector bundle with connection $(E,h^E,\nabla^E)$.
Note that both constructions admit an extension to the case of
a general $J$-invariant metric $g^{TX}$, and the general
construction of \cite{MM08a} also deals with the case when
one adds a potential term to $\Delta_p$.
In \cref{Approx,BTasy}, we describe the results
of \cite{Ioo18c} on the dependence of the quantization to
the choice of an almost complex structure $J\in\End(TX)$ at
the semi-classical limit $p\fl+\infty$. Specifically, we introduce
the parallel transport operator $\Tau_{p,t}$
between different quantum spaces $\HH_p$
along a path $\{J_t\in\End(TX)\}_{t\in\R}$ of almost
complex structures with respect to the \emph{$L^2$-connection}
\cref{connectionL2}, and we describe
in \cref{Approx} how $\Tau_{p,t}$
behaves like a \emph{Toeplitz operator} as
$p\fl+\infty$,
giving an explicit formula for the highest order coefficient.
This is based on the
theory of Berezin--Toeplitz operators for symplectic manifolds
developed by Ma and Marinescu in \cite{MM08b},
and extended to this context in \cite{ILMM17}.

In \cref{quantevosec}, we show how one can use this parallel transport
to construct the quantum Hamiltonian
dynamics associated with any $F\in\cinf(\R\times X,\R)$.
In fact, the corresponding Hamiltonian flow
$\varphi_t:X\fl X$ defined by \cref{Hamflow} for all $t\in\R$
does not preserves
any almost complex structure
in general, and thus does not
induce an action on $\HH_p$ for any $p\in\N$. Instead,
fix an almost complex structure $J_0\in\End(TX)$
compatible with $\om$, and consider the almost complex structure
$J_t:=d\varphi_t\,J_0\,d\varphi_t^{-1}$,
as well as the spaces $\HH_{p,t}$ of quantum states associated with
$J_t$, for all $t\in\R$ and $p\in\N$. This flow, together
with its lift to $L$ defined by \cref{liftflow}, induces a unitary
isomorphism $\varphi_{t,p}^*:\HH_{p,t}\fl\HH_{p,0}$ by pullback
on $\cinf(X,L^p)$, and considering the parallel transport
$\Tau_{p,t}:\HH_{p,0}\fl\HH_{p,t}$ along the path $s\mapsto J_s$
for $s\in[0,t]$,
the associated \emph{quantum evolution operator} at time $t\in\R$
is given by the unitary operator
$\varphi_{t,p}^*\Tau_{p,t}\in\End(\HH_{p,0})$.

In the rest of the Introduction, we assume that
the Hamiltonian $F=:f\in\cinf(X,\R)$ does not depend on
time, and write $\xi_f\in\cinf(X,TX)$ for the
\emph{Hamiltonian vector field} of $f$, as defined in
\cref{hamiltonian}. In that case, we show in \cref{quantevolem} that
\begin{equation}\label{evointro}
\varphi_{t,p}^*\Tau_{p,t}=\exp(-2\pi\i tpQ_p(f))\,,
\end{equation}
for all $t\in\R$ and $p\in\N$, where $Q_p(f)$ is the \emph{Kostant--Souriau operator} associated with $f$, defined as an operator acting on
$\cinf(X,L^p)$ by the formula
\begin{equation}\label{quanthamflaintro}
Q_p(f):=P_p\left(f
-\frac{\i}{2\pi p}\nabla^{L^p}_{\xi_f}\right)P_p\,,
\end{equation}
where $f$ is the operator of pointwise multiplication by $f$
and $P_p:\cinf(X,L^p)\fl\HH_{p,0}$ is the $L^2$-orthogonal projection.
This is the holomorphic version of the
\emph{Blattner--Kostant--Sternberg kernel}, as described for example
in \cite[\S\,9.7]{Woo92}. Under this form, it was first
noticed by Foth and Uribe in \cite[\S\,3.2]{FU07},
who interpreted the trace of $Q_p(f)$ as a
moment map for the group of Hamiltonian diffeomorphisms
acting on the space
of almost complex structures compatible with $\om$.
Using the results described in \cref{setting},
we establish the following semi-classical
estimate on its Schwartz kernel \cref{kerdef},
where $\tau_{t,p}$
denotes the parallel transport in $L^p$
along $s\mapsto\varphi_s(x)$ for $s\in\,[0,t]$.
Here and in all the paper,
we use the notation $O(p^{-k})$
in the sense of the corresponding Hermitian norm as
$p\fl+\infty$, and
$O(p^{-\infty})$ means $O(p^{-k})$ for all $k\in\N$.

\begin{prop}\label{Utthintro}
For any $t\in\R$ and $x,\,y\in X$ such that $\varphi_t(y)\neq x$,
we have the following estimate as $p\fl+\infty$,
\begin{equation}\label{thetacongUtintro}
\exp\left(2\pi\sqrt{-1}tpQ_p(f)\right)(x,y)=
O(p^{-\infty})\,.
\end{equation}
Furthermore, there exist $a_r(t,x)\in\C$ $(r\in\N)$
smooth in $x\in X$ and $t\in\R$,
such that for any $k\in\N^*$, as $p\fl+\infty$ we have
\begin{multline}\label{exppsiquant1intro}
\exp\left(2\pi\sqrt{-1}tpQ_p(f)\right)(\varphi_t(x),x)\\
=p^{n} e^{2\pi\i tpf(x)}\left(\sum_{r=0}^{k-1}
p^{-r} a_{r}(t,x) + O(p^{-k})\right)\tau_{t,p}\,,
\end{multline}
with $a_0(t,x)\neq 0$ for all $t\in\R$ and $x\in X$.
\end{prop}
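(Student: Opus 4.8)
The plan is to read the Schwartz kernel of $\exp(2\pi\sqrt{-1}\,tpQ_p(f))$ directly off the formula \cref{evointro}. Applying \cref{evointro} at time $-t$ --- which is legitimate since $Q_p(f)$ does not depend on time --- gives
\[
\exp\bigl(2\pi\sqrt{-1}\,tpQ_p(f)\bigr)=\varphi_{-t,p}^*\circ\Tau_{p,-t}\colon\HH_{p,0}\fl\HH_{p,0}\,,
\]
where $\Tau_{p,-t}\colon\HH_{p,0}\fl\HH_{p,-t}$ is the parallel transport along $s\mapsto J_s$, $s\in[-t,0]$, and $\varphi_{-t,p}^*\colon\HH_{p,-t}\fl\HH_{p,0}$ is the pullback by $\varphi_{-t}$ twisted by the lift to $L^p$ of \cref{liftflow}, namely $(\varphi_{-t,p}^*\sigma)(x)=\widetilde\varphi_{t,p}\bigl(\sigma(\varphi_{-t}(x))\bigr)$ with $\widetilde\varphi_{t,p}\colon L^p_{\varphi_{-t}(x)}\fl L^p_x$ the lift of $\varphi_t=\varphi_{-t}^{-1}$. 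Composing with the Schwartz kernel $\Tau_{p,-t}(\cdot,\cdot)$ of $\Tau_{p,-t}$, I get
\[
\exp\bigl(2\pi\sqrt{-1}\,tpQ_p(f)\bigr)(x,y)=\widetilde\varphi_{t,p}\cdot\Tau_{p,-t}\bigl(\varphi_{-t}(x),y\bigr)\,,
\]
the lift $\widetilde\varphi_{t,p}$ being applied to the factor $L^p_{\varphi_{-t}(x)}$ of $\Tau_{p,-t}(\varphi_{-t}(x),y)\in L^p_{\varphi_{-t}(x)}\otimes(L^p_y)^*$.

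Next I would invoke \cref{Approx}: the kernel $\Tau_{p,-t}(\cdot,\cdot)$ is that of a Berezin--Toeplitz-type operator, so it is $O(p^{-\infty})$ off the diagonal of $X\times X$ and admits a complete on-diagonal expansion $\Tau_{p,-t}(w,w)=p^{n}\bigl(\sum_{r=0}^{k-1}p^{-r}c_r(t,w)+O(p^{-k})\bigr)$, viewed as an element of $\End(L^p_w)=\C$, with $c_r(t,w)$ smooth in $(t,w)$ and $c_0(t,w)\neq0$ everywhere, the last fact being exactly the explicit leading-coefficient formula recalled there. Since $\widetilde\varphi_{t,p}$ is a unitary isomorphism of complex lines, the second displayed kernel formula gives, whenever $\varphi_{-t}(x)\neq y$, i.e. $\varphi_t(y)\neq x$, the estimate $\exp(2\pi\sqrt{-1}\,tpQ_p(f))(x,y)=O(p^{-\infty})$, which is \cref{thetacongUtintro}.

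For \cref{exppsiquant1intro}, I fix $x_0\in X$ and set $x=\varphi_t(x_0)$, $y=x_0$, so that $\varphi_{-t}(x)=x_0$ and the kernel formula becomes $\exp(2\pi\sqrt{-1}\,tpQ_p(f))(\varphi_t(x_0),x_0)=\widetilde\varphi_{t,p}\cdot\Tau_{p,-t}(x_0,x_0)$. Here $\widetilde\varphi_{t,p}\colon L^p_{x_0}\fl L^p_{\varphi_t(x_0)}$ is the lift along the flow line $s\mapsto\varphi_s(x_0)$, and since $f$ is constant along its own Hamiltonian flow lines ($df(\xi_f)=0$), \cref{liftflow} identifies it with $e^{2\pi\sqrt{-1}\,tpf(x_0)}\tau_{t,p}$. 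As $\Tau_{p,-t}(x_0,x_0)$ is a scalar it commutes with $\tau_{t,p}$, and substituting its on-diagonal expansion yields \cref{exppsiquant1intro} with $a_r(t,x):=c_r(t,x)$; in particular $a_r(t,x)$ is smooth in $(t,x)$ and $a_0(t,x)=c_0(t,x)\neq0$ for all $t\in\R$ and $x\in X$.

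Thus essentially all of the substance is packed into \cref{Approx}, and that is where I expect the real work --- carried out in \cite{Ioo18c} and recalled in \cref{setting} --- to lie: one needs the off-diagonal decay and the rescaled near-diagonal asymptotics of the Bergman-type kernels of the renormalized Bochner Laplacian of \cite{MM08b,ILMM17}, applied to the $L^2$-parallel-transport operator $\Tau_{p,-t}$, together with the explicit non-vanishing leading symbol of that operator. The only point requiring care in the reduction above is the bookkeeping of conventions --- that the phase is $e^{2\pi\sqrt{-1}\,tpf(x)}$ rather than its conjugate, and that $\widetilde\varphi_{t,p}$ is the lift of $\varphi_t$ and not of $\varphi_{-t}$ --- which follow from the sign conventions built into \cref{evointro}, \cref{quanthamflaintro}, \cref{hamiltonian} and \cref{liftflow}.
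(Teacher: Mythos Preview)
Your reduction is exactly the paper's: use \cref{evointro} at time $-t$ to write the kernel as $\varphi_{t,p}\cdot\Tau_{p,-t}(\varphi_{-t}(x),y)$, then read off both the off-diagonal decay and the on-diagonal expansion from the known asymptotics of $\Tau_{p,-t}(\cdot,\cdot)$, identifying the phase via the exponentiated Kostant formula $\varphi_{t,p}=e^{2\pi\i tpf}\tau_{t,p}$ of \cref{alphaf}.

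The one point that needs fixing is the reference. \cref{Approx} is an \emph{operator-norm} statement: it says $\|\Tau_{p,t}-\sum_{l<k}p^{-l}P_{p,t}\mu_{l,t}P_{p,0}\|_{p,0,t}\leq C_kp^{-k}$. An operator-norm bound of size $p^{-k}$ only controls the Schwartz kernel up to $O(p^{n-k})$ (via $|A(x,y)|\leq\|A\|\sqrt{P_{p,0}(y,y)P_{p,t}(x,x)}$), so it does not directly give the pointwise off-diagonal $O(p^{-\infty})$ decay or the on-diagonal expansion you claim. What you actually want is \cref{BTasy}, which is precisely the pointwise kernel statement: \cref{thetafla} gives the off-diagonal decay you use for \cref{thetacongUtintro}, and \cref{expTau} with $Z=Z'=0$ gives the on-diagonal expansion. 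The paper's own proof of \cref{Utth} invokes \cref{BTasy} at both steps.

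One further small point: \cref{expTau} yields an expansion in powers of $p^{-1/2}$, with coefficients $G_{r,t,x_0}(0,0)$. The reason your $a_r$'s are indexed by integer powers of $p^{-1}$ is that $G_{r,t,x_0}(Z,Z')$ has the same parity as $r$, so $G_{2q+1,t,x_0}(0,0)=0$ for all $q\in\N$; you should say this explicitly.
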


This follows from the more precise \cref{Utth}, which gives
in particular a formula for the first coefficient $a_0(t,x)$.
As explained there,
this result shows that the quantum dynamics approximates the
classical dynamics at the semi-classical limit $p\fl+\infty$
in a precise sense. In \cref{sctrthcor},
we also use the results described
in \cref{setting} to
give an associated semi-classical trace formula.
Note that the estimate \cref{thetacongUtintro}
is not uniform in $t\in\R,\,x,\,y\in X$, and the
estimate \cref{exppsiquant1intro} shows that there is
in fact a jump when $\varphi_t(y)$ tends to $x$.

In \cref{gutzsec}, we consider a time-independent Hamiltonian
$f\in\cinf(X,\R)$, and we use the setting of \cref{setting}
to study the
operators $\hat{g}(pQ_p(f-c))\in\End(\HH_p)$ defined for all $p\in\N$
by the formula
\begin{equation}\label{psiquantintro}
\hat{g}(pQ_p(f-c)):=\int_\R g(t)e^{2\pi\sqrt{-1} tpc}
\left(\varphi_{t,p}^*\Tau_{p,t}\right) dt\,,
\end{equation}
where $g:\R\fl\R$ is smooth with compact support, where
$c\in\R$ is a regular value of $f$ and where
$\varphi_{t,p}^*\Tau_{p,t}\in\End(\HH_{p,0})$
is the quantum evolution operator
associated with $f$. Via the interpretation
\cref{evointro} in terms of quantum evolution operators,
the \emph{Gutzwiller trace formula} predicts a semi-classical estimate
for the trace $\Tr[\hat{g}(pQ_p(f-c))]$
as $p\fl+\infty$, in terms of the
periodic orbits of the Hamiltonian flow of $f$ included in the
level set $f^{-1}(c)$. This formula was first worked out by Gutzwiller
in \cite[(36)]{Gut71} for usual Schrödinger operators over $\R^n$
as the \emph{Planck constant} $\hbar$ tends to $0$,
using path integral methods. As explained in his book
\cite{Gut90}, this formula plays a fundamental role in the
theory of \emph{quantum chaos}, which studies the quantization
of chaotic classical systems.

Specifically, let $\Supp g\subset\R$ be the support of $g$,
and suppose that for all $t\in\Supp g$,
the fixed point set $X^{\varphi_t}\subset X$ of $\varphi_t:X\fl X$
is non-degenerate
over a neighborhood of $f^{-1}(c)$ in the sense of \cref{nondegdef}
and intersects $f^{-1}(c)$ transversally
such that $X^{\varphi_t}\cap f^{-1}(c)$ is
non-empty only for a finite subset $T\subset\Supp g$.
Let $\{Y_j\}_{1\leq j\leq m}$ be the
set of connected components of
\begin{equation}
\coprod_{t\in T} X^{\varphi_t}\cap f^{-1}(c)\,,
\end{equation}
and
for any $1\leq j\leq m$, write $t_j\in T$ for the time such that
$Y_j\subset X^{\varphi_{t_j}}\cap f^{-1}(c)$.
In particular, these hypotheses are automatically
satisfied if $\Supp g\subset\R$ is a small enough neighborhood of $0$,
so that $T=\{0\}$ and 
$X^{\varphi_0}=X$.

Let $\lambda_j\in\R$ be
the \emph{action} of $f$ over $Y_j$ as in \cref{actiondef}, and
let $\Vol_\om(f^{-1}(c))>0$ be the volume of $f^{-1}(c)$ with
respect to the natural \emph{Liouville measure} \cref{liouvilleham}
induced by $\om$ and $f$ on $f^{-1}(c)$.

\begin{theorem}\label{gutzwillerperorbintro}
Under the above assumptions, there exist $b_{j,r}\in\C$
$(r\in\N)$, depending only on geometric data around $Y_j$
for all $1\leq j\leq m$,
such that for any $k\in\N^*$ and as $p\fl+\infty$, we have
\begin{equation}\label{gutzexpperorbintro}
\Tr\left[\hat{g}(pQ_p(f-c))\right]=\sum_{j=1}^m
p^{(\dim Y_j-1)/2}g(t_j)
e^{-2\pi\i p \lambda_j}\left(\sum_{r=0}^{k-1}
p^{-r}b_{j,r}+O(p^{-k})\right)\,.
\end{equation}
Furthermore, there is an explicit geometric formula for the first
coefficients $b_{j,0}$ for all $1\leq j\leq m$, and
as $p\fl+\infty$ we have
\begin{equation}\label{Weylintro}
\Tr\left[\hat{g}(pQ_p(f-c))\right]=p^{n-1}g(0)
\Vol_\om(f^{-1}(c))
+O(p^{n-2})\,.
\end{equation}
\end{theorem}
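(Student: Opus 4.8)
\emph{Reduction to an oscillatory integral.} The plan is to compute the trace from the Schwartz kernel of the quantum evolution operator and then carry out a finite‑dimensional stationary phase of Gutzwiller type. Since $\xi_{c-f}=-\xi_f$, the Hamiltonian $c-f$ has flow $\varphi_{-t}$, and on $\HH_p$ one has $Q_p(c-f)=c-Q_p(f)$; hence by \cref{evointro}, writing
\[
U_{t,p}:=e^{2\pi\sqrt{-1}tpc}\,\varphi_{t,p}^*\Tau_{p,t}=\exp\!\big(2\pi\sqrt{-1}tpQ_p(c-f)\big)\in\End(\HH_p)\,,
\]
the defining formula \cref{psiquantintro} and the identity $\Tr[A]=\int_X\operatorname{tr}[A(x,x)]\,dv_X(x)$ for the trace of an operator with smooth Schwartz kernel give
\[
\Tr\!\big[\hat g(pQ_p(f-c))\big]=\int_\R g(t)\int_X\operatorname{tr}\!\big[U_{t,p}(x,x)\big]\,dv_X(x)\,dt\,.
\]
The kernel of $U_{t,p}$ is controlled by \cref{Utthintro} and its refinement \cref{Utth}, applied to the Hamiltonian $c-f$, whose time‑$t$ flow $\varphi_{-t}$ has the same fixed‑point sets $X^{\varphi_{-t}}=X^{\varphi_t}$ as $\varphi_t$.

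\emph{Parametrix and localization.} By \cref{Utthintro}, for each $t$ the integrand $\operatorname{tr}[U_{t,p}(x,x)]$ is $O(p^{-\infty})$ away from $X^{\varphi_t}$, uniformly for $t\in\Supp g$, so only a neighbourhood of $Z:=\{(t,x)\in\Supp g\times X:\varphi_t(x)=x\}$ contributes, modulo $O(p^{-\infty})$. On such a neighbourhood one invokes \cref{Utth} together with the Berezin–Toeplitz off‑diagonal estimates of \cref{Approx,BTasy}: they present $U_{t,p}$ near the graph of $\varphi_{-t}$ as a semi‑classical Fourier integral operator, twisted by parallel transport in $L^p$ along the flow lines, so that writing $x=\exp_{x_0}(v)$ with $x_0\in X^{\varphi_t}$ and $v$ in the normal bundle $N^{\varphi_t}$,
\[
\operatorname{tr}\!\big[U_{t,p}(x,x)\big]=p^n e^{2\pi\sqrt{-1}p\Phi(t,x_0,v)}\Big(\sum_r p^{-r}\alpha_r(t,x_0,v)+O(p^{-\infty})\Big)
\]
uniformly for $t$ in compact sets, where $\operatorname{Im}\Phi\geq0$ with $\operatorname{Im}\Phi(t,x_0,v)\sim|(\mathrm{Id}-d\varphi_t)_{x_0}v|^2$ and $\operatorname{Re}\Phi(t,x_0,0)\equiv t(c-f(x_0))$ modulo the $L^p$‑holonomy phase along the orbit through $x_0$. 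The Gaussian integral over $v$, legitimate by \cref{nondegdef} (so that $\mathrm{Id}-d\varphi_t$ is invertible on $N^{\varphi_t}$ near $f^{-1}(c)$), contributes $p^{-(\operatorname{codim}X^{\varphi_t})/2}$ and leaves an oscillatory integral over a neighbourhood of $Z$. As differentiation of $U_{t,p}$ in $t$ brings down the principal symbol $c-f$ of $Q_p(c-f)$, the critical set of the remaining phase is $Z\cap f^{-1}(c)$; by the transversality of $X^{\varphi_t}\cap f^{-1}(c)$ and the finiteness of $T$, this confines the integral modulo $O(p^{-\infty})$ to disjoint neighbourhoods of $\{t_j\}\times Y_j$, $1\leq j\leq m$.

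\emph{Stationary phase near $Y_j$.} Near $\{t_j\}\times Y_j$, pick coordinates adapted to the flags $Y_j\subset X^{\varphi_{t_j}}\cap f^{-1}(c)\subset X^{\varphi_{t_j}}\subset X$: coordinates along $Y_j$, the normal coordinate $\rho$ to $f^{-1}(c)$ inside $X^{\varphi_{t_j}}$ (along which $f$ is non‑critical, by transversality), the coordinates $v$ normal to $X^{\varphi_{t_j}}$, and the time $t$. The phase $2\pi p\Phi$ then has critical manifold exactly $\{t_j\}\times Y_j$, with non‑degenerate (complex, $\operatorname{Im}\geq0$) transverse Hessian: the oscillatory block in $(t,\rho)$ is non‑degenerate since $\partial_t\partial_\rho\Phi\neq0$ (transversality), and the Gaussian block in $v$ is non‑degenerate by \cref{nondegdef}. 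The value of $\Phi$ on $\{t_j\}\times Y_j$ is constant, equal to $-\lambda_j$ with $\lambda_j$ the action of \cref{actiondef} (energy term plus holonomy, constant on the connected component $Y_j$ at fixed energy). A complex stationary phase over the $(2n+1)$‑dimensional integration space with $\dim Y_j$‑dimensional critical manifold then produces the power $p^{\,n-(2n+1-\dim Y_j)/2}=p^{(\dim Y_j-1)/2}$, the factor $g(t_j)e^{-2\pi\sqrt{-1}p\lambda_j}$, and a series $\sum_r p^{-r}b_{j,r}$ whose coefficients are integrals over $Y_j$ of universal expressions in the $\alpha_r$, in the transverse Hessian determinant (hence in $\det(\mathrm{Id}-d\varphi_{t_j})|_{N^{\varphi_{t_j}}}$), and in the geometry of $f^{-1}(c)$; this proves \cref{gutzexpperorbintro} and gives the announced closed formula for $b_{j,0}$.

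\emph{The Weyl term, and the main obstacle.} For \cref{Weylintro} one singles out the components with $t_j=0$: there $\varphi_0=\mathrm{Id}$, $X^{\varphi_0}=X$, the disjoint union contains $f^{-1}(c)$, so the corresponding $Y_j$ have $\dim Y_j=2n-1$ and contribute at order $p^{n-1}$, while any $Y_j$ with $t_j\neq0$ has either $\dim Y_j\leq 2n-2$ or, in the degenerate subcase $\varphi_{t_j}\equiv\mathrm{Id}$, contributes an oscillatory $e^{-2\pi\sqrt{-1}p\lambda_j}O(p^{n-1})$ term with $\lambda_j\neq0$ — so when $\Supp g$ is a small neighbourhood of $0$, only $t=0$ occurs. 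At $t=0$ one has $\tau_{0,p}=\mathrm{Id}$ and $\alpha_0(0,x_0,0)$ equal to the leading Bergman coefficient $1$, and the $p^{n-1}$‑coefficient comes from the one‑dimensional Gaussian integral in $t$ against $e^{2\pi\sqrt{-1}tp(c-f(x))}$ (which localizes to $f^{-1}(c)$ and supplies $p^{-1/2}$) followed by the one‑dimensional Gaussian integral in $\rho$ (supplying another $p^{-1/2}$); carrying out both and matching the resulting density on $f^{-1}(c)$ with the definition \cref{liouvilleham} of the Liouville measure yields exactly $g(0)\Vol_\om(f^{-1}(c))$. The step I expect to be the main obstacle is the one underlying the second paragraph: establishing the near‑graph expansion of $U_{t,p}$ with the correct complex phase and Gaussian transverse profile, uniformly for $t$ in compact sets and with remainders strong enough to license the subsequent complex stationary phase in $(t,x)$ — equivalently, that $U_{t,p}$ is a genuine semi‑classical Fourier integral operator quantizing $\varphi_{-t}$ on $L^p$ — which is precisely where the refined \cref{Utth} and the Berezin–Toeplitz machinery of \cref{Approx,BTasy} are indispensable, and where the "jump" of \cref{exppsiquant1intro} as $\varphi_t(y)\to x$ must be made quantitative.
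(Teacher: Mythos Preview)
Your outline follows the same stationary-phase philosophy as the paper's proof (Theorem~\ref{gutzwiller}), but the paper organizes the reduction quite differently, and that reorganization is where the actual analytic work lies. The paper does \emph{not} localize first to $X^{\varphi_t}$: that set is empty near $f^{-1}(c)$ for generic $t$, so your parametrization in Step~2 by $x_0\in X^{\varphi_t}$ with variable $t$ is ill-posed (in Step~3 you silently and correctly switch to coordinates adapted to $X^{\varphi_{t_j}}$ for fixed $t_j$). Instead the paper first localizes in \emph{energy} via \cref{oscvener} --- repeated integration by parts in $t$ against the Kostant phase $e^{-2\pi i p t(f(x)-c)}$ --- confining the trace integral to a shrinking tube $\{|f-c|<\epsilon p^{-\theta/2}\}$; only then does off-diagonal decay \cref{thetafla} localize in $t$ near the $t_j$.

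The central difference is the treatment of the energy direction. In coordinates $(t,u,w)$ with $u=f-c$ and $w$ normal to $Y_j$ inside $\Sigma=f^{-1}(c)$, the local model \cref{locmodT} is Gaussian in $t$ and $w$ but has \emph{no decay whatsoever in $u$}: the only $u$-dependence is the oscillatory factor $e^{2\pi i p t u}$. The paper's key computation \cref{Gfla3}--\cref{Gfla4} integrates in $t$ first --- a Gaussian Fourier transform that \emph{produces} a Gaussian in $u$ --- and only then integrates in $u$, then in $w$. Your packaging as a single complex stationary phase with $\Im\Phi\geq 0$ obscures this: along the $u$-direction $\Im\Phi\equiv 0$ identically, so the $(t,u,w)$ integral is not absolutely convergent and must be taken as an iterated one in the specific order the paper uses; this is precisely the obstacle you correctly flag in your last paragraph but do not resolve. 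Two further technical points: \cref{BTasy} provides only a finite expansion with a bounded $O(p^{-k/2+\delta})$ remainder, not a phase-amplitude form with $O(p^{-\infty})$ error, so one cannot invoke an off-the-shelf stationary phase lemma --- the paper integrates each explicit Gaussian term and bounds the remainder over the shrinking region separately; and the explicit formula for $b_{j,0}$, which is part of the statement, falls out directly from the iterated Gaussians and \cref{nu0} in the paper's approach, whereas in yours it would still require unwinding the transverse Hessian determinant by hand. Your power count $p^{n-(2n+1-\dim Y_j)/2}=p^{(\dim Y_j-1)/2}$ is correct and conceptually cleaner than the paper's three successive rescalings.
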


Note that formula  \cref{gutzexpperorbintro} does not follow from
\cref{evointro} and \cref{Utthintro} by integrating over $t\in\R$,
due to the jump in the estimates \cref{thetacongUt} and
\cref{exppsiquant1intro} when $\varphi_t(y)$ tends to $x$.
This is illustrated by \cref{oscvener}, where it is shown
how $\hat{g}(pQ_p(f-c))$ localizes around $f^{-1}(c)$ after
integrating in $t\in\R$ via stationary phase estimates,
with a precise control on the constant around $f^{-1}(c)$.

The general formula for the first coefficients $b_{j,0}$
of the expansion
\cref{gutzexpperorbintro} is given in \cref{gutzwiller}, and
reduces to the so-called \emph{Weyl term}
\cref{Weylintro} of the trace formula in the case $0\in\Supp g$.
However, the relevance of this formula
for quantum chaos mainly lies in the terms associated with
\emph{isolated periodic orbits},
and one would like to consider general situations
where this formula exhibits natural geometric quantities associated
to such orbits.

To describe such situations, let us first consider the general case
of a Hermitian vector bundle with connection $(E,h^E,\nabla^E)$
over $\R\times X$. Writing $E_t$ for its restriction to $X$ over
$t\in\R$, we take more generally the quantum spaces $\HH_{p,t}$
to be the almost holomorphic sections of $E_t\otimes L^p$ with
respect to $J_t$, for all $p\in\N$, together with the $L^2$-Hermitian
product induced by $h^{E_t}$ and $h^{L^p}$. Following
\cref{quantbdledef}, we can again consider the parallel transport
$\Tau_{p,t}:\HH_{p,0}\fl\HH_{p,t}$ with respect to the associated
$L^2$-connection, and if $\varphi_t:X\fl X$ is a Hamiltonian flow
lifting to a bundle map $\varphi_t^E:E_0\fl E_t$ over $X$
for all $t\in\R$, we again have a unitary evolution operator
$\varphi_{t,p}^*\Tau_{p,t}\in\End(\HH_{p,0})$.
Then the right hand side of formula \cref{psiquantintro}
still makes sense,
and \cref{gutzwiller} gives the general version of
\cref{gutzwillerperorbintro} in this context.

Consider now the canonical line bundle $(K_X,h^{K_X},\nabla^{K_X})$
over $\R\times X$ associated with $\{J_t\in\End(TX)\}_{t\in\R}$
defined in \cref{setting} by the formula \cref{KXdef}, and
assume that $(X,\om)$ admits a \emph{metaplectic structure},
so that this canonical line bundle admits a square root
$K_X^{1/2}$ over $\R\times X$ with induced metric and connection,
called the \emph{metaplectic correction}.
Then $\varphi_t$ admits a natural lift for all $t\in\R$, and
we call the associated unitary operator $\varphi_{t,p}^*\Tau_{p,t}$
as above the \emph{metaplectic quantum evolution operator}.

\begin{theorem}\label{b0gutzperorbintroth}
Assume that $(X,\om)$ admits a metaplectic
structure, and consider the assumptions of
\cref{gutzwillerperorbintro}.
Let $1\leq j\leq m$ be such that $\dim Y_j=1$
and such that $[J\xi_f,\xi_f]=0$ over $Y_j$.
Then the first coefficients of the
analogous expansion \cref{gutzexpperorbintro} as $p\fl+\infty$
for the metaplectic quantum evolution operator
satisfy the following formula,
\begin{equation}\label{b0gutzperorbintro}
b_{j,0}=(-1)^{\frac{n-1}{2}}
\frac{t(Y_j)}
{|\det_{N_x}(\Id_{N}-d\varphi_{t_j}|_{N})|^{1/2}}
\,,
\end{equation}
for a natural choice of square root and for any $x\in Y_j$,
where $N$ is the normal bundle
of $Y_j$ inside $T f^{-1}(c)$ and
where $t(Y_j)>0$ is the primitive
period of $Y_j$ as a periodic orbit of the flow
$t\mapsto\varphi_t$ inside $f^{-1}(c)$.
\end{theorem}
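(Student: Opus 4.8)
The plan is to obtain \cref{b0gutzperorbintro} by specializing to the case $\dim Y_j=1$ the general formula for the coefficient $b_{j,0}$ provided by \cref{gutzwiller}, now for the quantum spaces twisted by the metaplectic correction $E=K_X^{1/2}$, and under the extra hypothesis $[J\xi_f,\xi_f]=0$ over $Y_j$. Recall the mechanism behind \cref{gutzwillerperorbintro}: inserting the kernel estimate \cref{exppsiquant1intro} for $\exp(2\pi\i tpQ_p(f))$ along the graph of $\varphi_t$ into the definition \cref{psiquantintro} and taking the trace realizes $\Tr[\hat g(pQ_p(f-c))]$ as the integral over $(t,x)\in\R\times X$ of a density of the form $p^{n}e^{2\pi\i tp(c-f(x))}\,\Tr[\,\cdots\,\tau_{t,p}\,\cdots\,]$ (together with the Gaussian off-graph factor); by the non-degeneracy \cref{nondegdef} and transversality hypotheses this density concentrates near the finite set $\bigsqcup_j\{t_j\}\times Y_j$, as analyzed in \cref{oscvener}, and the contribution of $Y_j$ is extracted by a stationary phase expansion in the directions of $\R\times X$ transverse to $\{t_j\}\times Y_j$. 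Thus \cref{gutzwiller} expresses $b_{j,0}$ as the leading term of this expansion, an integral over $Y_j$ of a geometric density built from the coefficient $a_0(t_j,\cdot)$ of \cref{exppsiquant1intro}, from the transverse Hessian of the phase, and from the holonomy of $\tau_{t_j,p}$; the task is to evaluate this density under the present hypotheses.

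First I would fix adapted data along $Y_j$. Since the flow $\varphi_s$ commutes with $\varphi_{t_j}$ and preserves $f^{-1}(c)$, the field $\xi_f$ is tangent to $X^{\varphi_{t_j}}\cap f^{-1}(c)$; as $\dim Y_j=1$ it is nowhere zero on $Y_j$ and spans $TY_j$, so $Y_j$ is a periodic orbit of $\xi_f$ with $t_j=n_j\,t(Y_j)$ for some $n_j\in\N^*$, and parametrizing $Y_j$ by the flow time $s$ the induced length form integrates to $t(Y_j)$ over $Y_j$. Along $Y_j$ I use the $\om$-orthogonal splitting $TX|_{Y_j}=\R\xi_f\oplus\R J\xi_f\oplus N$, where $\R J\xi_f=\R\grad f$ is the direction transverse to $f^{-1}(c)$ and $N\subset Tf^{-1}(c)$ is the symplectic normal bundle of $Y_j$ from the statement. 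The point of the hypothesis $[J\xi_f,\xi_f]=0$ over $Y_j$ is that then $d\varphi_s$ maps $J\xi_f(x)$ to $J\xi_f(\varphi_s(x))$ for all $x\in Y_j$, so $d\varphi_{t_j}$ fixes $\R\xi_f\oplus\R J\xi_f$ pointwise and preserves the splitting; hence $\Id-d\varphi_{t_j}$ is invertible precisely on $N$, which is both the meaning of non-degeneracy of $Y_j$ here and what makes the transverse Hessian of the phase block-diagonalize, its sole invertible block being identified, after the symplectic and metric normalizations, with $\Id_N-d\varphi_{t_j}|_N$.

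With this in hand I would evaluate the three resulting blocks. Integration along the tangent direction $\R\xi_f$ to $Y_j$ is the length integral and contributes the factor $t(Y_j)$. Integration in the pair $(t,J\xi_f)$ carries the phase $2\pi\i tp(c-f(x))$, whose Hessian in these variables is non-degenerate since $f$ is a submersion along $J\xi_f=\grad f$ at the regular value $c$; the corresponding partial stationary phase pins $t=t_j$ and $x\in f^{-1}(c)$, and contributes, together with $a_0(t_j,\cdot)$ and the Liouville normalization \cref{liouvilleham} of $f^{-1}(c)$, the remaining scalar constants and the power of $p$ needed so that the total power is $p^{(\dim Y_j-1)/2}=p^{0}$. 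Integration over the $N$-block is Gaussian with quadratic form $\Id_N-d\varphi_{t_j}|_N$, producing the factor $|\det_{N_x}(\Id_N-d\varphi_{t_j}|_N)|^{-1/2}$ together with a unit-modulus phase given by its signature. Finally, the parallel transport $\tau_{t_j,p}$ in $K_X^{1/2}\otimes L^p$ around $Y_j$ splits as its $L^p$-holonomy, which exponentiates $2\pi\lambda_j$ with $\lambda_j$ the action \cref{actiondef} and has already been pulled out in the prefactor $e^{-2\pi\i p\lambda_j}$, times its $K_X^{1/2}$-holonomy, which is a canonical square root of the $K_X$-holonomy, i.e.\ of the determinant of $d\varphi_{t_j}$ acting on $T^{(1,0)}X$ along $Y_j$; combining the latter with the signature phase of the $N$-block, the metaplectic correction cancels the Maslov-type ambiguity and collapses the total unit-modulus factor to $(-1)^{(n-1)/2}$ for the natural choice of square root indicated in the statement. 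Assembling the three blocks yields \cref{b0gutzperorbintro}.

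The main obstacle is precisely this last bookkeeping: identifying the transverse Hessian of the phase with $\Id_N-d\varphi_{t_j}|_N$ in a basis-independent symplectic normalization, decoupling it cleanly from the degenerate $(\xi_f,J\xi_f,t)$-block by means of the commutation hypothesis, and then tracking every unit-modulus factor, namely the signature of the $N$-Gaussian, the $K_X^{1/2}$-holonomy, and the phases produced by the successive partial stationary phases, in order to verify that they combine to exactly $(-1)^{(n-1)/2}$ with no residual Maslov correction. Everything else is a direct specialization of \cref{gutzwiller} together with the elementary geometry of the orbit $Y_j$.
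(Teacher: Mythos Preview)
Your overall strategy is sound, but you take a detour where the paper's argument is much shorter. You correctly start from the general formula for $b_{j,0}$ in \cref{gutzwiller} and correctly identify that the hypothesis $[J\xi_f,\xi_f]=0$ forces $d\varphi_{t_j}$ to fix $\R\xi_f\oplus\R J\xi_f$ and hence to preserve $N$. But from there, rather than redoing a block-by-block stationary-phase computation, the paper simply observes that this $J$-invariance of the splitting $TX|_{Y_j}=TY_j\oplus\R J\xi_f\oplus N$ is exactly the hypothesis of \cref{loctrcT}, which packages once and for all the Gaussian integral over $N$ together with the factor $(\det(\overline{\Pi}_{t_j}^0)^{-1}\tau_{t_j}^{K_X})^{-1/2}$ appearing in \cref{b0gutz} into the clean product $(-1)^{(n-1)/2}(\varphi^{K_X,-1}\tau^{K_X}_{t_j})^{-1/2}\,|\det_N(\Id_N-d\varphi_{t_j}|_N)|^{-1/2}$. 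There is no need to re-identify the transverse Hessian or to disentangle the $(t,J\xi_f)$-block by hand; all of that has already been absorbed into \cref{gutzwiller} and \cref{loctrcT}.

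The metaplectic cancellation is also simpler than the Maslov--signature story you describe. With $E=K_X^{1/2}$, the trace factor in \cref{b0gutz} is $\Tr_E[\varphi^{E,-1}_{t_j}\tau^E_{t_j}]=(\varphi^{K_X,-1}_{t_j}\tau^{K_X}_{t_j})^{1/2}$, which cancels \emph{algebraically} against the factor $(\varphi^{K_X,-1}_{t_j}\tau^{K_X}_{t_j})^{-1/2}$ produced by \cref{loctrcT}; no separate tracking of a $K_X^{1/2}$-holonomy around the orbit or of a signature phase is required. After this cancellation and the observation (by conjugation under $d\varphi_t$) that $\det_{N_x}(\Id_N-d\varphi_{t_j}|_N)$ is constant along $Y_j$, the remaining integral is $\int_{Y_j}|dv|_{Y_j}/|\xi_f|_{g^{TX}_0}=t(Y_j)$, and \cref{b0gutzperorbintro} follows. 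The ``main obstacle'' you flag at the end is thus bypassed entirely by invoking \cref{loctrcT} rather than re-deriving its content.
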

For $(X,J_t,\om)$ Kähler for all $t\in\R$ and
endowed with a metaplectic structure,
one can show using \cite[(5.2)]{Ioo18c}
that the generator of the metaplectic quantum evolution operator
considered above coincides with the metaplectic Kostant--Souriau
operator considered by Charles in \cite[Th.\,1.5]{Cha06}.

\cref{b0gutzperorbintroth}
follows from \cref{gutzwillerperorb}, which gives also
a formula for general $(E,h^E,\nabla^E)$
as an integral along the associated periodic orbit.
In the case of usual Schrödinger operators over
a compact Riemannian manifold,
the Gutzwiller trace formula
has been established by Guillemin and Uribe \cite[Th.\,2.8]{GU89},
Paul and Uribe \cite[Th.\,5.3]{PU95} and Meinrenken
\cite[Th.\,3]{Mei92}, while in the case of Toeplitz operators
over the smooth boundariy of a compact strictly pseudoconvex domain,
it has been established by Boutet de Monvel
and Guillemin \cite[Th,\,9,\,Th.\,10]{BdMG81}.
In the case of Berezin--Toeplitz operators over a compact
prequantized Kähler manifold with metaplectic structure,
instead of Kostant--Souriau
operators over a general compact prequantized symplectic manifold
as in \cref{gutzwillerperorbintro}, it
has been established
by Borthwick, Paul and Uribe \cite[Th.\,4.2]{BPU98} using
the theory of Boutet de Monvel and Guillemin \cite{BdMG81}.

In all the works cited above, the corresponding formulas for the first
coefficients involve an undetermined
subprincipal symbol term with no obvious geometric interpretation.
In contrast, the general formula for the
first coefficient given in \cref{b0gutz} is
completely explicit in terms of local geometric
data.
Furthermore, the formula for isolated periodic orbits
given in \cref{b0gutzperorbintroth}
is the same as the corresponding formulas in
all the cases mentioned above, but without the undetermined
subprincipal symbol term.
This makes it much simpler to use in practical applications.

In fact, let the Hamiltonian $f\in\cinf(X,\R)$ and the
almost complex structure $J\in\End(TX)$ be such that
\begin{equation}
d\iota_{J\xi_f}\om=\om~~\text{over}~~f^{-1}(I)\,,
\end{equation}
for some interval $I\subset\R$ of regular values of $f$
containing $c\in\R$.
As explained at the end of \cref{gutzsec}, this
induces a \emph{contact form} $\alpha\in\Om^1(\Sigma,\R)$
on $\Sigma:=f^{-1}(c)$, and $\xi_f$ generates the
\emph{Reeb flow} of $(\Sigma,\alpha)$. 
Then \cref{contactcor} shows how 
\cref{gutzwillerperorbintro} can be used to detect the periods
of the non-degenerate isolated periodic orbits of this flow,
and \cref{b0gutzperorbintroth} allows
in principle to compute the associated action.
This is of particular interest in contact topology, where
the study of non-degenerate isolated periodic orbits of the Reeb flow
is a major topic, usually tackled via methods of Floer homology.
Note that to recover the action from the expansion
\cref{gutzexpperorbintro} in practice, one needs a completely
explicit formula for the first coefficient, and formula
\cref{b0gutzperorbintro} is the best that one can hope for.

In \cref{exppsiquant}, we also establish 
semi-classical estimates
on the Schwartz
kernel of the operator $\hat{g}(pQ_p(f-c))$ as $p\fl+\infty$,
analogous to the
corresponding estimates in \cite[Th.\,1.1]{BPU98} for
Berezin--Toeplitz operators over compact prequantized
Kähler manifolds admitting a metaplectic structure. Once again,
our formula for the first order term
is completely explicit in terms of geometric data,
without the undetermined subprincipal symbol term appearing in the
corresponding formula in
\cite[Th.\,2.7]{BPU98}.

In the case of compact prequantized Kähler manifolds and when
the Hamiltonian flow $\varphi_t:X\fl X$ acts by biholomorphisms,
so that one can define the quantization of $\varphi_t$ simply
by its induced action on holomorphic sections,
the pointwise semi-classical estimates of 
\cref{exppsiquant} for $E=\C$
have been obtained by Paoletti in \cite[Th.\,1.2]{Pao18}.
In this case, the general
version of \cref{gutzwillerperorbintro}
is a direct consequence of the following \emph{Kirillov formula},
\begin{equation}\label{Kir}
\Tr\left[\varphi_{t_j+t,p}^*\right]=\int_{X^{\varphi_{t_j}}}
\Td_{\varphi_{t_j}^{-1},-t\xi_f}(TX)
\ch_{\varphi_{t_j}^{-1},-t\xi_f}(L^p)\,,
\end{equation}
as described in \cite[(2.38)]{BG00}
for any fixed $1\leq j\leq m$ and $|t|>0$ small enough,
using the stationary phase lemma as $p\fl+\infty$.
Paoletti recovers this special case in \cite[Th.\,1.3]{Pao18}
without using formula \cref{Kir}.

The theory of Berezin--Toeplitz operators over compact prequantized
Kähler manifolds with $E=\C$
was first developed by Bordemann,
Meinreken and Schlichenmaier \citep{BMS94} and Schlichenmaier \citep{Sch00}. Their approach is based on
the work of Boutet de Monvel and Sjöstrand on the Szegö kernel
in \cite{BdMS75}, and
the theory of Toeplitz structures developed by Boutet de Monvel
and Guillemin in \citep{BdMG81}. The present paper is based instead on
the approach of Ma and Marinescu using Bergman kernels, and
we refer to the book \cite{MM07} for a detailed presentation of this
method.

The quantization of symplectic maps over compact prequantized
Kähler manifolds has first been considered by Zelditch in \cite{Zel97},
using a unitary version of the theory
of Toeplitz structures of \citep{BdMG81},
and Zelditch and Zhou use it in
\cite[Th.\,0.9]{ZZ08} to establish the pointwise
semi-classical estimates of \cref{exppsiquant} in the Kähler case
for $E=\C$. Note that \cref{gutzwillerperorbintro} is not
a consequence of these pointwise semi-classical estimates,
as they are not uniform in $x\in X$.
The applications of parallel transport to the quantum dynamics
associated with Hamiltonian flows have also been explored by Charles
\cite[Th.\,5.3]{Cha07} in the case of compact prequantized
Kähler manifolds with
metaplectic structure, where he establishes an analogue of
\cref{Utthintro} in the
language of Fourier integral operators.

\begin{ackn*}
The author wants to thank Pr. Xiaonan Ma for his support
and Pr. Leonid Polterovich for helpful discussions.
The author also wants to thank the anonymous
referees for useful comments
and suggestions. This work was supported
by the European Research Council Starting grant 757585.
\end{ackn*}

\section{Setting}\label{setting}

Let $(X,\om)$ be a compact symplectic manifold
without boundary of dimension $2n$,
and let $(L,h^L)$ be a Hermitian line bundle over $X$, endowed with
a Hermitian connection $\nabla^L$ satisfying
the prequantization condition \cref{preq}.
Let $J$ be an almost complex structure
on $TX$ compatible with $\om$,
and let $g^{TX}$ be the Riemmanian metric on $X$ defined by
\begin{equation}
g^{TX}(\cdot,\cdot):=\om(\cdot,J\cdot)\,.
\end{equation}
We write $\nabla^{TX}$ for the associated Levi--Civita connection
on $TX$, and $dv_X$ for the Riemannian volume form of
$(X,g^{TX})$. It satisfies the \emph{Liouville
formula} $dv_X=\om^n/n!$.

For any Hermitian vector
bundle with connection $(E,h^E,\nabla^E)$ over $X$,
we write 
$\<\cdot,\cdot\>_E$ and $|\cdot|_E$ for the Hermitian product
and norm induced by $h^E$, and write $R^E$ for the curvature of
$\nabla^E$. We denote by $\C$ the trivial line bundle with trivial
Hermitian metric and connection.
For any $p\in\N$, we write
$L^p$ for the $p$-th tensor power of $L$, and
for any Hermitian vector bundle with connection $(E,h^E,\nabla^E)$,
we set 
\begin{equation}\label{Ep}
E_p:=L^p\otimes E\,,
\end{equation}
equipped with the Hermitian metric $h^{E_p}$ and connection
$\nabla^{E_p}$ induced by $h^L,\,h^E$ and
$\nabla^L,\,\nabla^E$.
The \emph{$L^2$-Hermitian product} $\<\cdot,\cdot\>_p$
on $\cinf(X,E_p)$ is given for any $s_1,s_2\in\cinf(X,E_p)$
by the formula
\begin{equation}\label{L2}
\<s_1,s_2\>_p:=\int_X \<s_1(x),s_2(x)\>_{E_p}\,dv_X(x)\,.
\end{equation}
Let $L^2(X,E_p)$ be the completion of $\cinf(X,E_p)$ with respect to
$\<\cdot,\cdot\>_p$.

\begin{defi}\label{bochner}
For any $p\in\N$, the \emph{Bochner Laplacian} $\Delta^{E_p}$ of
$(E_p,h^{E_p},\nabla^{E_p})$ is the second order
differential operator acting on $\cinf(X,E_p)$ by the formula
\begin{equation}\label{delta}
\Delta^{E_p}:=-\sum_{j=1}^{2n}\left[(\nabla^{E_p}_{e_j})^2
-\nabla^{E_p}_{\nabla^{TX}_{e_j}e_j}\right],
\end{equation}
where $\{e_j\}_{j=1}^{2n}$ is any local orthonormal frame of
$(TX,g^{TX})$.
\end{defi}
This defines an unbounded 
self-adjoint elliptic operator on $L^2(X,E_p)$, and
by standard elliptic theory, its spectrum $\Spec(\Delta^{E_p})$
is discrete and
contained in $\R$.

\begin{defi}\label{bochnerrenorm}
For any $p\in\N$, the \emph{renormalized Bochner Laplacian}
$\Delta_p$ is the second order differential operator acting
on $\cinf(X,E_p)$ by the formula
\begin{equation}\label{deltalpe}
\Delta_p:=\Delta^{E_p}-2\pi np-\sum_{j=1}^n R^E(w_j,\bar{w_j})\,,
\end{equation}
where $\{w_j\}_{j=1}^n$ is an orthonormal basis of
$T^{(1,0)}X$ for the Hermitian metric induced by $g^{TX}$.
\end{defi}

As above, $\Delta_p$ is an unbounded self-adjoint elliptic
operator on $L^2(X,E_p)$, and has discrete spectrum $\Spec(\Delta_p)$
contained in
$\R$. Furthermore, we have the following refinement of
\cite[Th.2.a]{GU88}.

\begin{theorem}{\cite[Cor.~1.2]{MM02}}\label{specdeltapphi}
There exist constants $\til{C},\,C>0$ such that for all $p\in\N$,
\begin{equation}\label{specdeltapphifla}
\Spec(\Delta_p)\subset\ [-\til{C},\til{C}\,]\ \cup\ ]4\pi n p-C,+\infty[\,,
\end{equation}
and the constants $\til{C},~C>0$ are uniform in the choice of
$J\in\End(TX)$ varying smoothly in a compact set of parameters.
Furthermore, the direct sum 
\begin{equation}\label{ahs}
\HH_p:=\bigoplus_{\substack{\lambda\in [-\til{C},\til{C}\,]}}
\Ker(\lambda-\Delta_p)
\end{equation}
is naturally included in $\cinf(X,E_p)$, and there is $p_0\in\N$
such that for any $p\geq p_0$, we have
\begin{equation}\label{RRH}
\dim\HH_{p}=\int_X\Td(T^{(1,0)}X)\ch(E)\exp(p\om),
\end{equation}
where $\Td(T^{(1,0)}X)$ represents the Todd class of $T^{(1,0)}X$ and $\ch(E)$ represents the Chern character of $E$.
The integer $p_0\in\N$ is uniform in the choice of $J\in\End(TX)$
varying smoothly in a compact set of parameters.
\end{theorem}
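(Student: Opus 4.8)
\emph{Proof strategy.} The inclusion $\HH_p\subset\cinf(X,E_p)$ is immediate from elliptic regularity, since each summand $\Ker(\lambda-\Delta_p)$ consists of eigensections of the elliptic operator $\Delta_p$; the substance is thus the spectral gap \cref{specdeltapphifla}, its uniformity in $J$, and the dimension formula \cref{RRH}. The plan is to obtain all three by comparing $\Delta_p$ with the $\mathrm{Spin}^c$ Dirac operator $D_p$ acting on $\Om^{0,\bullet}(X,E_p):=\cinf(X,\Spin\otimes E_p)$, built from the Clifford connection induced by $\nabla^{TX}$, $\nabla^E$ and $\nabla^{L^p}$. First I would record, from the Lichnerowicz formula for $D_p^2$ and the prequantization condition \cref{preq} --- the latter used to evaluate the curvature contribution $\sum_j R^{E_p}(w_j,\bar{w_j})=2\pi np+\sum_j R^E(w_j,\bar{w_j})$ of $R^{E_p}=pR^L+R^E$ --- that on the bidegree-$(0,0)$ component $\cinf(X,E_p)$ one has, at the level of quadratic forms,
\[
\langle\Delta_p\,s,s\rangle_p=\|D_p s\|_p^2+\langle\mathscr O_p\,s,s\rangle_p\,,
\]
where $\mathscr O_p$ is a bundle endomorphism built from the torsion of $J$ and the curvatures of $TX$ and $E$ (it vanishes when $J$ is integrable), bounded uniformly in $p\in\N$. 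This is exactly what subtracting $2\pi np+\sum_j R^E(w_j,\bar{w_j})$ in \cref{deltalpe} achieves: it matches the $O(p)$ Clifford term $c(pR^L)$ carried by the $\Lambda^{0,0}$-factor of $D_p^2$, so that $\mathscr O_p$ is bounded rather than of size $p$. Since $D_p^2\ge0$, this already gives $\inf\Spec(\Delta_p)\ge-\til C$ with $\til C:=\sup_p\|\mathscr O_p\|_{L^\infty}$.

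The hard part will be the separation of the low-lying spectrum from the rest, together with its uniformity. Here I would run the localization argument of Bismut--Vasserot, in the refined form of Ma--Marinescu: fixing $x_0\in X$, one passes to normal coordinates centred at $x_0$ and rescales the variable by $1/\sqrt p$, so that the conjugated operators $p^{-1}D_p^2$ converge, in an appropriate weighted sense, to a model generalized Laplacian $\mathscr L_{x_0}$ on $T_{x_0}X\cong\C^n$ --- a harmonic oscillator determined solely by $R^L_{x_0}$ --- acting on $\Lambda^\bullet(T^{*(0,1)}_{x_0}X)\otimes E_{x_0}$. By \cref{preq} the Hermitian endomorphism of $T^{(1,0)}X$ attached to $R^L$ is $2\pi$ times the identity (because $\om$ is the fundamental $(1,1)$-form of $g^{TX}$), so $\Spec(\mathscr L_{x_0})$ is an explicit discrete set whose lowest cluster, entirely of bidegree $(0,0)$, is separated by a positive gap from the rest, which contains all of bidegree $\ge1$. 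Propagating this back to $X$ by finite-propagation-speed and resolvent estimates as in \cite{MM07} gives $\Spec(D_p^2)$ a gap of the shape in \cref{specdeltapphifla} with its low-energy cluster concentrated in bidegree $(0,0)$; combining this with the quadratic-form identity of the first paragraph --- that is, comparing $\Delta_p$ on $\cinf(X,E_p)$ with the compression $P_0 D_p^2 P_0$ of $D_p^2$, which agrees with it up to the bounded $\mathscr O_p$ --- yields \cref{specdeltapphifla} for $\Delta_p$. For the uniformity, $\mathscr L_{x_0}$ depends only on the fixed datum $R^L$, while the errors in the rescaling limit and in the Lichnerowicz comparison are bounded by finitely many derivatives of $g^{TX}$, $J$, $\nabla^{TX}$, $R^{TX}$ and $R^E$, which stay bounded as $J$ ranges over a compact family; hence $\til C$, $C$ and the threshold $p_0$ below are uniform.

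For the dimension formula, observe that for $p\ge p_0$ the gap \cref{specdeltapphifla} for $D_p^2$ and the bidegree-$(0,0)$ concentration of its low-energy cluster force that cluster to equal $\Ker D_p$ and to lie in $\cinf(X,E_p)\subset\Om^{0,\mathrm{even}}(X,E_p)$; since $D_p$ anticommutes with the $\Z/2$-grading, it therefore equals $\Ker D_p^+$, with $\Ker D_p^-=0$. The uniform gap, via the comparison of the first paragraph (min-max is insensitive to the bounded perturbation $\mathscr O_p$ once the gap dominates $2\|\mathscr O_p\|_{L^\infty}$, i.e.\ for $p\ge p_0$), identifies $\dim\HH_p$ with the dimension of that cluster. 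Finally, the Atiyah--Singer index theorem for the $\mathrm{Spin}^c$ Dirac operator attached to $J$, together with $c_1(L)=[\om]$ from \cref{preq}, gives
\[
\dim\HH_p=\operatorname{ind}(D_p)=\int_X\Td(T^{(1,0)}X)\,\ch(E)\,\ch(L^p)=\int_X\Td(T^{(1,0)}X)\,\ch(E)\exp(p\om)\,,
\]
which is \cref{RRH}; the threshold $p_0$ is uniform in $J$ since it is governed by the uniform gap, and the right-hand side is a $J$-independent topological integral.

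The single genuinely difficult step is the quantitative, $J$-uniform spectral separation of the second paragraph, which is the content of \cite[Cor.~1.2]{MM02} and rests on the analysis of the $\mathrm{Spin}^c$ Dirac operator on high tensor powers of a positive line bundle due to Bismut and Vasserot; the Lichnerowicz bookkeeping of the first paragraph and the index-theoretic count of the third are routine.
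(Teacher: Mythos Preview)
The paper does not give its own proof of this statement: it is quoted verbatim as \cite[Cor.~1.2]{MM02} and used as a black box. So there is no ``paper's proof'' to compare against; your proposal is effectively a sketch of the Ma--Marinescu argument itself, and at that level it is essentially correct and matches their strategy of reducing everything to the $\mathrm{Spin}^c$ Dirac operator $D_p$, then reading off the dimension from the Atiyah--Singer index once the vanishing $\Ker D_p^-=0$ is in hand.

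Two technical points are worth tightening. First, in the almost complex (non-K\"ahler) case the Clifford connection on $\Lambda^{0,\bullet}$ does not preserve $\Lambda^{0,0}$, so the discrepancy between $\Delta_p$ and the compression $P_0D_p^2P_0$ is in general a first-order differential operator with $p$-independent coefficients (built from the Nijenhuis tensor), not a bundle endomorphism. This does not spoil the argument---such a term is relatively bounded by $\epsilon\Delta_p+C_\epsilon$, hence negligible against a gap of size $p$---but your sentence ``$\mathscr O_p$ is a bundle endomorphism'' overstates what Lichnerowicz gives. Second, the spectral gap in \cite{MM02} is obtained more directly than you indicate: the Lichnerowicz formula already exhibits $D_p^2$ as $\Delta+p\,c(R^L)+O(1)$, and the Clifford endomorphism $c(R^L)$, computed from \cref{preq}, has its lowest eigenvalue on $\Lambda^{0,0}$ and a strictly larger eigenvalue on $\Lambda^{0,\ge1}$; this algebraic separation, together with $\Delta\ge0$, yields the gap without invoking the full rescaling/model-operator machinery you describe (that machinery is what underlies the Bergman kernel expansion in \cite{MM07}, which is a finer statement). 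Your min-max step linking $\dim\HH_p$ to $\dim\Ker D_p$ is correct once one notes that $\Ker D_p\subset\Lambda^{0,0}$ for $p$ large, so that $\Ker D_p\subset\Ker(P_0D_p^2P_0)$ and the variational principle bounds the $(\dim\Ker D_p+1)$-th eigenvalue of the compression from below by that of $D_p^2$.
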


For any $p\in\N$, the Hilbert space $\HH_p\subset L^2(X,E_p)$
defined by \cref{ahs}
is called the \emph{space of almost holomorphic sections}
of $E_p$. In the special case when $J$ is integrable, so that
$(X,J,\om,g^{TX})$ is a Kähler manifold and the Hermitian bundles
$(L,h^L)$ and $(E,h^E)$
admit natural holomorphic structures such that $\nabla^L$ and
$\nabla^E$ are their Chern connections, then 
the subspace $\HH_p\subset\cinf(X,L^p)$ coincides
with the space of holomorphic sections of $E_p$ for all $p\geq p_0$.
In fact, as explained for example
in \cite[\S\,1.4.3,\,\S\,1.5]{MM07},
by the \emph{Bochner-Kodaira formula}, the formula
\cref{deltalpe} is twice
the \emph{Kodaira Laplacian} of $E_p$, and there is a
\emph{spectral gap}, so that $\til{C}=0$
in \cref{specdeltapphifla}. It is then a basic fact of Hodge theory
that the kernel of the Kodaira Laplacian in $\cinf(X,E_p)$
is precisely the
space of holomorphic sections of $E_p$, for all $p\in\N$.

The goal of this section is to describe the results of
\cite{Ioo18c} about the dependence of this quantization scheme on
the choice of an almost complex structure $J\in\End(TX)$.
To this end, we consider a smooth path
\begin{equation}
t\longmapsto J_t\in\End(TX),~~\text{for all}~~t\in\R\,,
\end{equation}
of almost complex structures over $X$ compatible with $\om$.
We will see $\{J_t\in\End(TX)\}_{t\in\R}$ as the endomorphism of
the vertical tangent bundle $TX$ over $\R\times X$ of the
tautological fibration
\begin{equation}\label{tautfib}
\begin{split}
\pi:\R\times X&\longrightarrow\R\\
(t,x)&\longmapsto t\,,
\end{split}
\end{equation}
restricting to $J_t\in\End(TX)$ over $t\in\R$. We then have
an induced vertical Riemannian metric on
$TX$ over $\R\times X$, defined by its restriction
on the fibre $X$ over any $t\in\R$ via the formula
\begin{equation}\label{gTXintro}
g^{TX}_t(\cdot,\cdot):=\om(\cdot,J_t\,\cdot)\,.
\end{equation}
Following Bismut in \cite[Def.\,1.6]{Bis86} and in
\cite[(1.2)]{Bis97},
we consider the induced \emph{vertical
Levi--Civita connection} $\nabla^{TX}$ on the subbundle $TX$ of
the tangent bundle of $\R\times X$, defined by the formula
\begin{equation}\label{nab=PinabPi}
\nabla^{TX}:=\Pi^{TX}\nabla^{\R\oplus TX}\Pi^{TX}\,,
\end{equation}
where $\nabla^{\R\oplus TX}$ is the Levi--Civita connection
on the total space of $\R\times X$ for the Riemannian metric
defined on $T(\R\times X)=\R\oplus TX$ as the orthogonal sum
of the canonical metric of $\R$ and the metric
$g^{TX}_t$ over $t\in\R$, with $\Pi^{TX}:\R\oplus TX\fl TX$ the
canonical projection.
Note that by the Liouville formula $dv_X=\om^n/n!$,
the Riemannian volume form $dv_X$ of $(X,g^{TX}_t)$
does not depend on $t\in\R$.

Let $TX_\C:=TX\otimes_\R \C$ be
the complexification of the vertical tangent bundle
$TX$ over $\R\times X$. The family of
complex structures $\{J_t\in\End(TX)\}_{t\in\R}$
induces a splitting
\begin{equation}\label{splitc}
TX_\C=T^{(1,0)}X\oplus T^{(0,1)}X
\end{equation}
into the eigenspaces of $J_t$ corresponding to the eigenvalues $\sqrt{-1}$ and $-\sqrt{-1}$ over $\{t\}\times X$ for all $t\in\R$.
We endow $TX_\C$ with the Hermitian product
$h^{TX}$ given by $g^{TX}_t(\cdot,\bar{\cdot})$ over $\{t\}\times X$
for all $t\in\R$.
The \emph{canonical line bundle} associated with
$\{J_t\in\End(TX)\}_{t\in\R}$ is the line bundle
\begin{equation}\label{KXdef}
K_X:=\det(T^{(1,0)*}X)
\end{equation}
over $\R\times X$ equipped with the Hermitian metric
$h^{K_X}$ and connection $\nabla^{K_X}$
induced by the vertical Hermitian metric
$h^{TX}$ and the vertical Levi--Civita connection
\eqref{nab=PinabPi} via the splitting \cref{splitc}.

For any Hermitian vector bundle with connection $(E,h^E,\nabla^E)$
over $\R\times X$, we write $(E_t,h^{E_t},\nabla^{E_t})$ for the
Hermitian vector bundle with connection induced on $X$ by restriction
to the fibre over $t\in\R$. For all $t\in\R$, we write
\begin{equation}\label{tauE}
\tau^E_t:E_0\longrightarrow E_t
\end{equation}
for the bundle isomorphism over $X$ induced
by parallel transport in $E$ with respect
to $\nabla^E$ along horizontal directions of $\pi:\R\times X\fl\R$.
We still write $(L,h^L,\nabla^L)$ for the Hermitian line bundle with
connection over $\R\times X$ defined by pullback of $(L,h^L,\nabla^L)$
over $X$ via the second projection, and write
$(E_p,h^{E_p},\nabla^{E_p})$ for the tensor product $E_p=E\otimes L^p$
over $\R\times X$ for any $p\in\N$, with induced Hermitian metric
and connection.
For any $p\in\N$ and $t\in\R$, we write $\Delta_{p,t}$ for the
renormalized Bochner Laplacian acting on $\cinf(X,E_{p,t})$
associated with the metric $g^{TX}_t$ as in \cref{bochnerrenorm},
and write $\HH_{p,t}\subset\cinf(X,E_{p,t})$ for the associated
space of almost holomorphic sections defined in \cref{specdeltapphi}.

Let us assume that there exists $p_0\in\N$ such that
the two intervals in \cref{specdeltapphifla} are disjoint
and such that $\HH_{p,t}$ satisfies
the Riemann-Roch-Hirzebruch formula \cref{RRH}
for all $p\geq p_0$ and
$t\in\R$.
By \cref{specdeltapphi}, such a $p_0\in\N$ always
exists if we ask $J_t$ and $(E_t,h^{E_t},\nabla^{E_t})$ to be
independent of $t\in\R$ outside a compact set of $\R$.
On the other hand, this assumption will be automatically satisfied
for all $t\in\R$
in the main case of interest considered in \cref{quantevosec}.
In the sequel, we fix such a $p_0\in\N$.

Following for instance \cite[\S\,9.2]{BGV04},
for all $t\in\R$ and $p\geq p_0$,
we define the orthogonal projection operator
$P_{p,t}:L^2(X,E_{p,t})\fl\HH_{p,t}$ with respect to the
associated $L^2$-Hermitian product \cref{L2} by the following
contour integral in the complex plane,
\begin{equation}\label{projdef}
P_{p,t}:=\int_\Gamma \left(\lambda-\Delta_{p,t}\right)^{-1}d\lambda\,,
\end{equation}
where $\Gamma\subset\C$ is a circle of center $0$ and radius
$a>0$ satisfying $\til{C}<a<4\pi p-C$.
This shows that the projection operators
$P_{p,t}$ depend smoothly on $t\in\R$, and as the dimension of
$\Im(P_{p,t})=\HH_{p,t}$ is constant in $t\in\R$ by assumption,
this defines a finite dimensional bundle over $\R$,
which can be seen as a subbundle of the infinite dimensional
vector bundle with fibre $\cinf(X,E_{p,t})$ over
$t\in\R$.

\begin{defi}\label{quantbdledef}
For any $p\geq p_0$, the \emph{quantum bundle}
$(\HH_p,h^{\HH_p},\nabla^{\HH_p})$ is the bundle
of almost holomorphic
sections over $\R\simeq\{J_t\in\End(TX)\}_{t\in\R}$
defined via \cref{projdef} as above,
endowed with the \emph{$L^2$-Hermitian structure}
$h^{\HH_p}$ induced by the
$L^2$-Hermitian product of $L^2(X,E_{p,t})$
for all $t\in\R$, and with the
\emph{$L^2$-Hermitian connection} $\nabla^{\HH_p}$,
defined on the canonical
vector field $\delt$ of $\R$ via its action on the
total space $\cinf(\R\times X,E_p)$ by the formula
\begin{equation}\label{connectionL2}
\nabla^{\HH_p}_{\delt}:=P_{p,t}\nabla^{E_p}_{\delt}P_{p,t}\,,
\end{equation}
for all $t\in\R$. By convention, we set $\HH_p=\{0\}$ for all $p<p_0$.
\end{defi}
By an argument of \cite[Th.\,1.14]{BGS88}, the $L^2$-connection
$\nabla^{\HH_p}$ preserves
the $L^2$-Hermitian product $h^{\HH_p}$.
For any $p\in\N$ and $t\in\R$, let $\cL(\HH_{p,0},\HH_{p,t})$
be the space of linear operators from $\HH_{p,0}$ to $\HH_{p,t}$,
and write $\|\cdot\|_{p,0,t}$ for
the operator norm of $\cL(\HH_{p,0},\HH_{p,t})$ induced by $h^{\HH_p}$.
We consider the \emph{parallel transport}
\begin{equation}\label{Tauptdef}
\Tau_{p,t}\in\cL(\HH_{p,0},\HH_{p,t})
\end{equation}
in the quantum bundle $\HH_p$ over $\R$ with respect to
$\nabla^{\HH_p}$.
Recall that $\tau^E_t:E_0\fl E_t$
has been defined by \cref{tauE}.
The following Theorem
shows that the parallel transport has the semi-classical
behaviour of a \emph{Toeplitz operator} as $p\fl+\infty$.

\begin{theorem}\label{Approx}
{\cite[Th.\,3.16]{Ioo18c}}
There exists a sequence
$\{\mu_{l,t}\in\CC^\infty(X,E_{p,t}\otimes E_{p,0}^*)\}_{l\in\N}$,
smooth in $t\in\R$, such that for all $k\in\N^*$,
there exists $C_k>0$ such that
\begin{equation}\label{genToepTau}
\Big\|\Tau_{p,t}-\sum_{l=0}^{k-1} p^{-l}P_{p,t}\mu_{l,t}P_{p,0}\Big\|_{p,0,t}\leq C_k p^{-k},
\end{equation}
for all $p\in\N$ and $t\in\R$.
Furthermore, there is a natural function $\mu_t\in\cinf(X,\C)$
such that the first coefficient $\mu_{0,t}$ satisfies
\begin{equation}\label{g0t}
\mu_{0,t}=\mu_t\tau^E_t\,.
\end{equation}
\end{theorem}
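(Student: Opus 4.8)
\emph{Proof plan.} The plan is to realise $\Tau_{p,t}$ as the solution of a linear evolution equation along $\R$, and then to solve that equation inside the class of generalized Berezin--Toeplitz operators by a recursion in $p^{-1}$.

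First I would rewrite the parallel transport condition as an ODE. Unwinding the definition \cref{connectionL2} of $\nabla^{\HH_p}$, for $s\in\HH_{p,0}$ the section $\sigma(t):=\Tau_{p,t}s\in\HH_{p,t}$ is characterised by $\sigma(0)=s$ and $P_{p,t}\nabla^{E_p}_{\delt}\sigma(t)=0$; differentiating $\sigma(t)=P_{p,t}\sigma(t)$ in $t$ and combining with the last relation yields the equivalent equation
\begin{equation*}
\nabla^{E_p}_{\delt}\sigma(t)=(\partial_t P_{p,t})\,\sigma(t),\qquad\sigma(0)=s ,
\end{equation*}
where $\partial_t P_{p,t}$ is the derivative of the operator $P_{p,t}$ along $t$ taken with $\nabla^{E_p}$; the constraint $\sigma(t)\in\HH_{p,t}$ propagates because $P_{p,t}(\partial_t P_{p,t})P_{p,t}=0$, so on $\HH_{p,t}$ this is the Kato parallel transport of the moving family of projections, the time-ordered exponential of $[\partial_s P_{p,s},P_{p,s}]$. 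Differentiating the contour integral \cref{projdef} gives
\begin{equation*}
\partial_t P_{p,t}=\frac{1}{2\pi\sqrt{-1}}\oint_\Gamma(\lambda-\Delta_{p,t})^{-1}(\partial_t\Delta_{p,t})(\lambda-\Delta_{p,t})^{-1}\,d\lambda ,
\end{equation*}
where $\partial_t\Delta_{p,t}$ is a second-order operator in $\nabla^{E_p}$ with $p$-independent coefficients, coming from $\partial_t g^{TX}_t$ and the variation of the vertical Levi--Civita connection, relatively bounded with respect to $\Delta_{p,t}$ uniformly in $p$; a standard resolvent estimate on the fixed-radius contour $\Gamma$ then gives $\|\partial_t P_{p,t}\|=O(1)$ in operator norm, uniformly in $p$ and in $t$ on compacts. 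Conjugating by the fibrewise parallel transport $\tau^E_t$ of \cref{tauE}, set $\hat P_{p,t}:=(\tau^E_t)^{-1}P_{p,t}\tau^E_t$ and $\hat\Tau_{p,t}:=(\tau^E_t)^{-1}\Tau_{p,t}$; then $\hat\Tau_{p,t}$ is the Kato transport of the moving Bergman projections $\hat P_{p,t}$ inside the fixed Hilbert space $L^2(X,E_{p,0})$, and $\Tau_{p,t}=\tau^E_t\hat\Tau_{p,t}$.

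Next I would bring in the Berezin--Toeplitz machinery. The input is the full off-diagonal asymptotic expansion of the Bergman kernels $P_{p,t}(x,x')$ of Ma--Marinescu, uniform in $t\in\R$ together with its $t$-derivatives, and the attendant calculus of generalized Berezin--Toeplitz operators between the spaces $\HH_{p,\bullet}$, stable under composition with an explicit product on symbols; this is the content of \cite{MM07,MM08b,ILMM17}, as developed in \cite{Ioo18c}. Together with the near-diagonal expansion of $(\lambda-\Delta_{p,t})^{-1}$, this shows that the compressions such as $\hat P_{p,s}(\partial_t\hat P_{p,t})\hat P_{p,t}$ and $\hat P_{p,s}(\partial_t\hat P_{p,t})(1-\hat P_{p,t})$ arising when the ODE is sandwiched between projections stay controlled by the calculus. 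I would then substitute the ansatz $\hat\Tau_{p,t}\sim\sum_{l\geq0}p^{-l}\hat P_{p,t}\,\mu_{l,t}\,\hat P_{p,0}$, with $\mu_{l,t}$ acting by pointwise multiplication by a section of $\End(E_0)$, into the transport equation, compose, and match powers of $p^{-1}$, obtaining a hierarchy of first-order linear transport ODEs in $t$ for the symbols $\mu_{l,t}$, with $\mu_{0,0}=\Id_{E_0}$ and $\mu_{l,0}=0$ for $l\geq1$, each with a unique smooth solution. Since at leading order the Bergman kernels --- hence also $\partial_t\hat P_{p,t}$ --- act as the identity on $E$, the variation being concentrated in the $L^p$-Gaussian factor, the leading symbol is a scalar multiple of the identity, $\mu_{0,t}=\mu_t\,\Id_{E_0}$; the scalar $\mu_t$ satisfies an explicit linear ODE read off from the principal symbols of $\partial_t\Delta_{p,t}$ and of the Bergman kernel, which together with $\mu_0=1$ determines it. As a consistency check, unitarity of $\Tau_{p,t}$ forces $|\mu_t|^{2}$ to be the reciprocal of the pointwise overlap of the model ground-state Gaussians attached to $J_0$ and $J_t$, so $|\mu_t|\geq1$ with equality exactly when $J_t=J_0$. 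Transporting back by $\tau^E_t$ gives \cref{g0t}. Finally, truncating the ansatz at order $k$ solves the transport equation up to an error that is $O(p^{-\infty})$ in operator norm, and since $\|\partial_t P_{p,t}\|=O(1)$ a Gronwall estimate for the difference with $\Tau_{p,t}$ delivers \cref{genToepTau}; uniformity over all of $\R$ rather than on compacts follows from the standing assumption that $J_t$ and $(E_t,h^{E_t},\nabla^{E_t})$ are $t$-independent off a compact set.

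The structural part --- the reduction to an ODE, the $p^{-1}$-recursion, and the Gronwall closure --- should be routine. The hard part will be the analytic input: one needs, \emph{uniformly in $t$ and all its derivatives}, both the off-diagonal Bergman kernel expansion for the non-integrable (merely symplectic) projections $P_{p,t}$ and the composition calculus for the associated generalized Toeplitz operators, and crucially that the off-diagonal pieces $\hat P_{p,s}(\partial_t\hat P_{p,t})(1-\hat P_{p,t})$ produced by the evolution equation still admit a controlled $p^{-1}$-expansion rather than escaping the calculus. This rests on the rescaled-Bergman-kernel and model-operator technique of Ma--Marinescu and its extension in \cite{ILMM17,Ioo18c}, which is where essentially all of the difficulty sits.
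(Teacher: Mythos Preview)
The paper does not prove this statement; it is quoted verbatim from \cite[Th.\,3.16]{Ioo18c} and used as input. What the paper does tell you about the argument is that in \cite{Ioo18c} the Toeplitz expansion \cref{genToepTau} is deduced from the near-diagonal Schwartz kernel expansion of $\Tau_{p,t}$ itself, namely \cref{BTasy}: once one has the full local model \cref{expTau} for $\Tau_{p,t,x_0}(Z,Z')$ with polynomial coefficients $G_{r,t,x_0}$, the Toeplitz structure and the identification \cref{g0t} of the leading symbol via \cref{mubar} are read off from that expansion, following the characterisation of Toeplitz operators by their kernel asymptotics in \cite{MM08b,ILMM17}.

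Your route is organised the other way around: you stay at the operator level, set up the Kato transport ODE for $\Tau_{p,t}$, and solve it by a formal $p^{-1}$-recursion for the symbols, closing with Gronwall. This is a sound strategy, and you correctly isolate where the work sits: to run the recursion you must know that operators of the form $P_{p,t}(\partial_t P_{p,t})\,g\,P_{p,0}$ and $(\partial_t P_{p,t})(1-P_{p,t})\,g\,P_{p,0}$ lie in the generalised Toeplitz calculus between $\HH_{p,0}$ and $\HH_{p,t}$ with computable symbols, uniformly in $t$. That input amounts to the uniform-in-$t$ (with $t$-derivatives) near-diagonal expansion of $P_{p,t}$ and a composition rule for Toeplitz operators between different almost complex structures --- essentially the same analytic content that \cite{Ioo18c} packages into \cref{BTasy}. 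So the two approaches rest on the same rescaled-model-operator machinery; the difference is that the paper extracts the symbols directly from the kernel of $\Tau_{p,t}$, whereas you would reconstruct them through a transport hierarchy using only the kernel of $P_{p,t}$.

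One small correction: you write that truncating the ansatz at order $k$ solves the transport equation up to an error that is $O(p^{-\infty})$; it is only $O(p^{-k})$, and Gronwall then gives exactly \cref{genToepTau} with the same order --- which is all that is claimed. Also, your argument for $\|\partial_t P_{p,t}\|=O(1)$ via relative boundedness of $\partial_t\Delta_{p,t}$ is correct in outcome but delicate in the constants, since both operators carry first-order pieces in $\nabla^{E_p}$ that scale like $p^{1/2}$ on $\HH_{p,t}$; the clean way is again through the rescaled local model rather than abstract perturbation theory.
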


To describe the function $\mu_t\in\cinf(X,\C)$ of \cref{g0t},
let us describe the local setting involved in the proof of the
above theorem in \cite{Ioo18c}. For any $t\in\R$, using
the fact that the almost complex structures
$J_0\in\End(TX)$ and $J_t\in\End(TX)$ are both compatible
with the same symplectic form $\om$, we get a splitting
\begin{equation}
TX_{\C}=T^{(1,0)}X_0\oplus T^{(0,1)}X_t\,
\end{equation}
into the holomorphic subspace $T^{(1,0)}X_0$ of $TX_\C$ associated
to $J_0\in\End(TX)$ and the anti-holomorphic subspace
$T^{(0,1)}X_t$ of $TX_\C$ associated with $J_t\in\End(TX)$
as in \cref{splitc}.
We write
\begin{equation}\label{Pi0t}
\Pi_0^t\in\End(TX_\C)
\end{equation}
for the projection operator onto $T^{(1,0)}X_0$ with
kernel $T^{(0,1)}X_t$. In a dual way, we write
$\overline{\Pi}_t^0\in\End(TX_\C)$ for the projection operator onto $T^{(0,1)}X_t$ with
kernel $T^{(1,0)}X_0$. Considering its restriction to
$T^{(0,1)}X_0$ and via the isomorphism
$T^{(0,1)}X_t\simeq T^{(1,0)*}X_t$ induced by $g^{TX}_t$ for all
$t\in\R$,
it induces an isomorphism
\begin{equation}
\det(\overline{\Pi}_t^0):K_{X,0}\longrightarrow K_{X,t}\,
\end{equation}
of the respective canonical line bundles over $X$.
Recall the connection $\nabla^{K_X}$ on the
canonical line bundle $K_X$ over $\R\times X$
of \cref{KXdef}, inducing $\tau_t^{K_X}:K_{X,0}\fl K_{X,t}$
by \cref{tauE}.
Then by \cite[(5.4)]{Ioo18c},
the function $\mu_t\in\cinf(X,\C)$ of \cref{g0t}
satisfies
\begin{equation}\label{mubar}
\bar{\mu}_t^2(x)=\det(\overline{\Pi}_t^0)^{-1}\tau_t^{K_X}\,,
\end{equation}
for all $t\in\R$, via the canonical identification
$K_{X,0}\otimes K_{X,0}^*\simeq\C$.

The main tool of the proof of \cref{Approx} in \cite{Ioo18c}
is the local study of the \emph{Schwartz kernel} with respect
to $dv_X$ of the parallel transport operator. For any linear operator
$\K_{p,t}\in\cL(\HH_{p,0},\HH_{p,t})$, write
$\K_{p,t}(\cdot,\cdot)\in
\cinf(X\times X,E_{p,t}\boxtimes E_{p,0}^*)$ for the Schwartz
kernel with respect to $dv_X$ of the bounded operator
\begin{equation}\label{KptL2}
\K_{p,t}:=P_{p,t}\K_{p,t}P_{p,0}:L^2(X,E_{p,0})
\longrightarrow L^2(X,E_{p,t})\,,
\end{equation}
defined for any $s\in\cinf(X,E_{p,0})$
and $x\in X$ by the formula
\begin{equation}\label{kerdef}
\K_{p,t}s\,(x)=\int_X \K_{p,t}(x,y)s(y)\,dv_X(y)\,.
\end{equation}
The existence of a smooth Schwartz
kernel is an immediate consequence
of the fact that the image of \cref{KptL2} is finite dimensional.
In the case $t=0$, so that $\K_{p,0}\in\End(\HH_{p,0})$
and $\K_{p,0}(x,x)\in\End(E_{p,0})_x$ for all $x\in X$,
we have the following basic trace formula,
\begin{equation}\label{Trfla}
\Tr[\K_{p,0}]=\int_X
\Tr\left[\K_{p,0}(x,x)\right]\,dv_X(x)\,.
\end{equation}
Fix $\epsilon>0$ and consider a collection of diffeomorphisms
\begin{equation}\label{chart}
B^{T_{x_0}X}(0,\epsilon)\xrightarrow{~~\sim~~} V_{x_0}\subset X\,,
\end{equation}
varying smoothly with $x_0\in X$,
sending $0$ to $x_0\in X$ and with differential at $0$ inducing the
identity of $T_{x_0}X$.
For any $x_0\in X$ and $t\in\R$, we pullback $(L,h^L,\nabla^L)$ and
$(E_t,h^{E_t},\nabla^{E_t})$ over $V_{x_0}$ in this chart,
and identify them with their central fibre $L_{x_0}$ and $E_{t,x_0}$
by parallel transport along
radial lines of $B^{T_{x_0}X}(0,\epsilon)$. We then identify
$L_{x_0}$ with $\C$ by the choice of a unit vector.
For any $\K_{p,t}\in\cL(\HH_{p,0},\HH_{p,t})$,
we write
$\K_{p,t,x_0}(\cdot,\cdot)$ for the image in this trivialization
of its Schwartz kernel over $V_{x_0}\times V_{x_0}$.
Then $\K_{p,t,x_0}(\cdot,\cdot)$ can be seen as
the evaluation at $x_0\in X$ of the pullback of $E_t\otimes E_0^*$
over the fibred product $B^{TX}(0,\epsilon)\times_X B^{TX}(0,\epsilon)$
over $X$, and for any $m\in\N$, let $|\cdot|_{\CC^m(X)}$
be a local $\CC^m$-norm on this bundle
induced by derivation with respect to $x_0\in X$.

For any $Z,\,Z'\in T_{x_0}X$ and $t\in\R$,
we use the following local model
\begin{equation}\label{locmodT}
\cT_{t,x_0}(Z,Z'):=
\exp\big(-\pi\left[\left\langle\Pi_0^t(Z-Z'),(Z-Z')
\right\rangle+\i\om(Z,Z')\right]\big)\,,
\end{equation}
where $\<\cdot,\cdot\>$ is the scalar product on $T_{x_0}X$
induced by the metric $g^{TX}_0$ defined by \cref{gTXintro}.
For any $F_{t,x_0}(Z,Z')\in E_t\otimes E_0^*$  polynomial in
$Z,\,Z'\in T_{x_0}X$, we write
\begin{equation}\label{polnot}
F\cT_{t,x_0}(Z,Z'):=F_{t,x_0}(Z,Z')\cT_{t,x_0}(Z,Z')
\in E_t\otimes E_0^*\,.
\end{equation}
For any function $h\in\cinf(X,\R)$, we will use repeatedly
in the sequel the following form of Taylor expansion around
any $x_0\in X$ up to order $k-1\in\N$,
as $|Z|\fl 0$
in the chart \cref{chart} above,
\begin{equation}\label{Taylordef}
\begin{split}
h(Z) &=h(x_0)+
\sum_{r=1}^{k-1} \sum_{|\alpha|=r}
\frac{\partial^{r} h}{\partial Z^\alpha}\frac{Z^\alpha}{\alpha!}
+O(|Z|^k)\\
& =h(x_0)+
\sum_{r=1}^{k-1} p^{-r/2} \sum_{|\alpha|=r}
\frac{\partial^{r} h}
{\partial Z^\alpha}
\frac{(\sqrt{p}Z)^\alpha}{\alpha!}
+p^{-\frac{k}{2}}O(|\sqrt{p}Z|^{k})\,.
\end{split}
\end{equation}
Write $d^X(\cdot,\cdot)$ for the Riemannian distance of
$(X,g^{TX}_0)$, and for any $m\in\N$,
let $|\cdot|_{\CC^m}$ be the local $\CC^m$-norm induced by derivation
with respect to $\nabla^{E_p}$ over $\R\times X$.
Then \cref{Approx} is based on the following result.

\begin{theorem}\label{BTasy}
{\cite[Th.\,4.3]{Ioo18c}}
Consider a collection of charts of the form \cref{chart},
varying smoothly with $x_0\in X$, sending $0$ to $x_0$ and
with  differential at $0$ inducing the identity of $T_{x_0}X$.
Then for any $m,\,k\in\N,\,\theta\in\,]0,1[$
and any compact subset $K\subset\R$,
there is
$C_k>0$ such that for all $p\in\N$ and $t\in K$,
we have
\begin{equation}\label{thetafla}
\left|\Tau_{p,t}(x,y)\right|_{\CC^m}\leq C_kp^{-k}\,
~~\text{as soon as}~~d^X(x,y)>\epsilon
p^{-\frac{\theta}{2}}\,.
\end{equation}
Furthermore, there is a family
$\{G_{r,t,x_0}(Z,Z')\in E_{t,x_0}\otimes E_{0,x_0}^*\}_{r\in\N}$
of polynomials
in $Z,Z'\in T_{x_0}X$ of the same parity as $r$,
depending smoothly on $x_0\in X$, such that
for any $m\,,m',\,l,\,k\in\N,\,\delta\in]0,1[$ and any
compact subset $K\in\R$,
there is $C>0$ and $\theta\in\,]0,1[$ such that for any
$x_0\in X, p\in\N$ and $Z,\,Z'\in T_{x_0}X$ with
$\,|Z|,|Z'|<\epsilon p^{-\theta/2}$, we have
\begin{multline}\label{expTau}
\sup_{|\alpha|+|\alpha'|=m}
\Big|\Dk{l}{t}\Dk{\alpha}{Z}\Dk{\alpha'}{Z'}\big(p^{-n}
\Tau_{p,t,x_0}(Z,Z')\\
-\sum_{r=0}^{k-1} p^{-r/2}G_r\cT_{t,x_0}(\sqrt{p}Z,\sqrt{p}Z')\big)\Big|_{\CC^{m'}(X)}
\leq Cp^{-\frac{k-m}{2}+\delta}\,,
\end{multline}
where $G_{0,t,x_0}(Z,Z')$ is constant in $Z,\,Z'\in T_{x_0}X$,
given by
\begin{equation}\label{|J|0}
G_{0,t,x_0}(Z,Z')=\bar{\mu}_t^{-1}(x_0)\tau^E_{t,x_0}\,.
\end{equation}
\end{theorem}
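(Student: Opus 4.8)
The plan is to transplant the analytic localization and rescaling technique of Ma and Marinescu for the Bergman kernel (see \cite{MM07}) to the $L^2$-parallel transport $\Tau_{p,t}$, working throughout with its Schwartz kernel. First I would identify all the spaces $L^2(X,E_{p,t})$ with the fixed space $L^2(X,E_{p,0})$ via the horizontal parallel transport $\tau^{E_p}_t$ of $\nabla^{E_p}$ along $\pi:\R\times X\fl\R$. Under this identification $\nabla^{E_p}_{\delt}$ becomes ordinary differentiation in $t$, the family $P_{p,t}$ becomes a smooth family of orthogonal projections of constant rank, the $L^2$-connection \cref{connectionL2} becomes the Grassmann connection $P_{p,t}\,\delt\,P_{p,t}$, and the parallel transport $\Tau_{p,t}$, extended to $L^2(X,E_{p,0})$ by precomposition with $P_{p,0}$, is Kato's adiabatic transport: the unique solution of
\begin{equation}\label{katotransport}
\dot\Tau_{p,t}=\bigl[\dot P_{p,t},P_{p,t}\bigr]\Tau_{p,t}\,,\qquad\Tau_{p,0}=P_{p,0}\,,
\end{equation}
where a dot denotes the $t$-derivative of an operator. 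Iterating Duhamel's formula then expresses $\Tau_{p,t}$ as a series over $N\in\N$ of integrals, over the simplex $\{0<s_1<\dots<s_N<t\}$, of the $(N+1)$-fold composites $[\dot P_{p,s_N},P_{p,s_N}]\cdots[\dot P_{p,s_1},P_{p,s_1}]\,P_{p,0}$; and the contour representation \cref{projdef}, differentiated in $t$, turns each factor into a contour integral of products of resolvents $(\lambda-\Delta_{p,s})^{-1}$, reducing everything to products of such resolvents, to which the uniform asymptotic analysis of Ma--Marinescu applies.

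For the off-diagonal estimate \cref{thetafla}, recall that by the spectral gap \cref{specdeltapphifla} and finite propagation speed for the wave operator of $\Delta_{p,t}$, the Bergman kernel $P_{p,t}(x,y)$ --- together with all its space- and $t$-derivatives --- is $O(p^{-\infty})$ off any $\epsilon p^{-\theta/2}$-neighborhood of the diagonal, uniformly over compact sets of parameters; hence so is the kernel of each $[\dot P_{p,s},P_{p,s}]$. The composition calculus of Ma--Marinescu, under which the class of operators with such far-off-diagonal decay (together with the near-diagonal rescaled expansion described below) is stable under composition without degrading the threshold, applies term by term in the Duhamel series, and the simplex volume factor $t^N/N!$ forces convergence with uniform estimates; one concludes that $\Tau_{p,t}(x,y)$ and all its $\CC^m$-derivatives are $O(p^{-\infty})$ as soon as $d^X(x,y)>\epsilon p^{-\theta/2}$.

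For the near-diagonal expansion \cref{expTau} I would fix $x_0\in X$, work in the chart \cref{chart}, trivialize $L$ and $E$ by radial parallel transport, and rescale $Z\mapsto Z/\sqrt{p}$. Taylor-expanding the coefficients of $\Delta_{p,t}$ in normal coordinates as in \cref{Taylordef}, the rescaled operator expands in powers of $p^{-1/2}$ with leading term the model harmonic oscillator on $(T_{x_0}X,J_{0,x_0},g^{TX}_0)$, whose lowest-eigenspace projection has the Bargmann--Fock kernel; combining this with the analogous model attached to $J_{t,x_0}$ produces the mixed model kernel $\cT_{t,x_0}(Z,Z')$ of \cref{locmodT}, in which the non-self-adjoint projection $\Pi_0^t$ of \cref{Pi0t} appears. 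Feeding the expansion of $\Delta_{p,s}$ into the resolvent/Duhamel series and collecting powers of $p^{-1/2}$ --- with the weighted Sobolev estimates, and Cauchy estimates in an auxiliary complex parameter, of Ma--Marinescu controlling the remainders --- yields \cref{expTau}, with $G_{r,t,x_0}(Z,Z')$ a polynomial of the same parity as $r$ (the parity being forced by the symmetry of the model) and the error exponent $-\tfrac{k-m}{2}+\delta$ recording the $m$ lost derivatives against a gain $p^{-1/2}$ per order. For the leading term one is reduced to a finite-dimensional computation on $T_{x_0}X$: the composite of the two model bottom-projections with the $t$-parallel transport is a constant endomorphism, a scalar multiple of $\tau^E_{t,x_0}$, and a Gaussian/determinant computation identifies that scalar with $\bar\mu_t^{-1}(x_0)$ via \cref{mubar}, which gives \cref{|J|0}, hence \cref{g0t} and \cref{Approx}.

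The main obstacle is the uniform control of the Duhamel/resolvent series: one must show it converges with a remainder that does not deteriorate as $p\fl+\infty$, uniformly in $x_0\in X$ and in $t$ over compact sets; that the rescaling limit commutes with the simplex integrations and with arbitrarily many $t$-, $Z$- and $\nabla^{E_p}$-derivatives; and that the sharp exponent $-\tfrac{k-m}{2}+\delta$ survives this differentiation. This is precisely what the Ma--Marinescu machinery --- weighted Sobolev norms for the rescaled resolvents combined with Cauchy estimates in a complex parameter replacing $t$ --- is designed to deliver. The genuinely new and delicate point is that the relevant model here is the mixed Fock model attached to the pair $(J_{0,x_0},J_{t,x_0})$ rather than the self-adjoint Bergman model, so that its lowest-eigenspace projection and the resulting Gaussian normalization \cref{mubar} must be identified by a direct computation.
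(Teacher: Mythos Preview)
This theorem carries no proof in the present paper: it is quoted verbatim as \cite[Th.\,4.3]{Ioo18c} and used as a black box throughout \cref{quantevosec,gutzsec}. There is therefore no argument here to compare your proposal against.

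That said, your outline is a reasonable sketch of the strategy one expects to find in \cite{Ioo18c}. The $L^2$-parallel transport is governed by a Kato-type ODE in the projections $P_{p,t}$; the contour representation \cref{projdef} reduces everything to products and $t$-derivatives of resolvents of $\Delta_{p,t}$; and the Ma--Marinescu localization/rescaling machinery \cite{MM07,MM08a} then yields both the far-off-diagonal decay and the near-diagonal expansion. Your identification of the genuinely new ingredient --- that the relevant local model is the \emph{mixed} Fock kernel attached to the pair $(J_{0,x_0},J_{t,x_0})$, encoded by the non-orthogonal projection $\Pi_0^t$ of \cref{Pi0t}, rather than the usual self-adjoint Bergman model --- is correct, and formula \cref{mubar} is precisely what pins down the leading coefficient \cref{|J|0}. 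Whether the actual proof in \cite{Ioo18c} proceeds by the Duhamel iteration you propose, or by a more direct ODE argument on the rescaled kernel (deriving and solving a transport equation in $t$ for the coefficients $G_{r,t,x_0}$), cannot be determined from this paper; both routes are viable and lead to the same hierarchy of local models.
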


Let us now consider a diffeomorphism $\varphi:X\fl X$ preserving
the symplectic form $\om$, together with a lift $\varphi^L:L\fl L$
to the total space of $L$ preserving metric and connection,
and assume that for some $t_0\in\R$, we have
\begin{equation}
J_{t_0}=d\varphi\,J_0\,d\varphi^{-1}\,.
\end{equation}
Assume further that there is a lift $\varphi^E:E_0\fl E_{t_0}$
of $\varphi$ preserving metric and connection,
and write $\varphi_{p}:E_{p,0}\fl E_{p,t_0}$
for the induced lift on $E_p$, for all $p\in\N$.
For any $s\in\cinf(X,E_{p,{t_0}})$, we define the \emph{pullback}
$\varphi_p^*s\in\cinf(X,E_{p,0})$ by the formula
\begin{equation}\label{pullbackdef}
(\varphi_p^*s)(x):=\varphi_{p}^{-1}s(\varphi(x))\,.
\end{equation}
This induces by restriction a unitary isomorphism
\begin{equation}
\varphi_p^*:\HH_{p,t_0}\xrightarrow{~~\sim~~}\HH_{p,0}\,.
\end{equation}
Then \cref{sctrth} gives a semi-classical estimate
for the trace of the endomorphism
$\varphi^{*}_{p}\Tau_{p,{t_0}}\in\End(\HH_{p,0})$
as $p\fl+\infty$, using the trace formula \cref{Trfla}.
To this end, we need the following assumption.

\begin{defi}\label{nondegdef}
The fixed point set $X^\varphi\subset X$ of a diffeomorphism
$\varphi:X\fl X$
is said to be \emph{non-degenerate} over an open set $U\subset X$
if $X^\varphi\cap U$ is a
proper submanifold of $\overline{U}$ satisfying
\begin{equation}\label{nondegfla}
T_xX^\varphi=\Ker(\Id_{T_xX}-d\varphi_x)~~\text{for all}~~x\in
X^\varphi\cap U.
\end{equation}
\end{defi}

As the lift $\varphi^L$
preserves $h^L$ and $\nabla^L$,
its value $\beta\in\C$ via the canonical identification
$L\otimes L^*\simeq\C$
is locally constant over $X^\varphi$, and satisfies $|\beta|=1$.

Let $Y\subset X$ be a submanifold, and let $N$ be a subbundle of
$TX$ over $Y$ transverse to $TY$. Write $g^N$ for the Euclidean
metric on $N$ induced by the metric $g^{TX}_0$
defined by \cref{gTXintro}, and write
$|dv|_{TX}$ and $|dv|_{N}$ for the Riemannian densities of
$(TX,g^{TX}_0)$ and $(N,g^N)$. We denote by $|dv|_{TX/N}$ the density
over $Y$ defined by the formula
\begin{equation}
|dv|_{TX}=|dv|_{TX/N}|dv|_{N}\,.
\end{equation}
We write $P^N:TX\fl N$ for the
orthogonal projection with respect to $g^{TX}_0$
of vector bundles over $Y$.

\begin{theorem}\label{sctrth}
{\cite[Th.\,1.2]{Ioo18c}}
Assume that the fixed point set $X^\varphi$ of $\varphi:X\fl X$
is non-degenerate over $X$, and write $\{X_j\}_{1\leq j \leq m}$ 
for the set of its connected components. Then there exist
densities $\nu_{r}$ over $X^\varphi$ for any $r\in\N$
such that for any $k\in\N^*$ and as $p\fl +\infty$,
\begin{equation}\label{indeqfle}
\Tr[\varphi^{*}_{p}\Tau_{p,{t_0}}]
=\sum_{j=1}^m p^{\dim X_j/2}e^{-2\pi\sqrt{-1}p\lambda_j}
\left(\sum_{r=0}^{k-1} p^{-r} \int_{X_j}\nu_{r}
+O(p^{-k})\right)\,,
\end{equation}
where $e^{2\pi\i \lambda_j}$ is the constant value of
$\varphi^L$ over $X_j$, for some $\lambda_j\in\R$. Furthermore, 
for any $x\in X^\varphi$ we have
\begin{multline}\label{nu0}
\frac{\nu_{0}}{|dv|_{TX/N}}(x)
=\Tr_{E_x}
[\varphi^{E,-1}\tau^E_{{t_0}}]
\left(\det(\overline{\Pi}_{t_0}^0)^{-1}
\tau_{t_0}^{K_X}\right)^{-\frac{1}{2}}_x
\int_{N_x}\cT_{{t_0},x}(d\varphi.Z,Z)dZ\\
=\Tr_{E_x}[\varphi^{E,-1}\tau^E_{t_0}]
\left(\det(\overline{\Pi}_{t_0}^0)^{-1}
\tau_{t_0}^{K_X}\right)^{-\frac{1}{2}}_x
\\
\det{}^{-\frac{1}{2}}_{N_x}
\left[P^N(\Pi_{0}^{t_0}-d\varphi^{-1}\overline{\Pi}_{t_0}^0)
(\Id_{TX}-d\varphi)P^N\right]\,,
\end{multline}
for some natural choices of square roots,
where $N$ is any subbundle of $TX$ over $X^\varphi$ transverse to
$TX^\varphi$.
\end{theorem}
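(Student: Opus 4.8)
\emph{Proof strategy.}
The approach is fixed-point localization, combining the trace formula \cref{Trfla} with the two halves of \cref{BTasy}: the off-diagonal decay \cref{thetafla} to localize the trace to a shrinking neighborhood of $X^\varphi$, and the near-diagonal expansion \cref{expTau} to extract the asymptotics of what remains. By \cref{Trfla} applied to $\K_{p,0}:=\varphi^*_p\Tau_{p,t_0}\in\End(\HH_{p,0})$ and the definitions \cref{pullbackdef} and \cref{kerdef},
\begin{equation*}
\Tr[\varphi^*_p\Tau_{p,t_0}]=\int_X\Tr\big[\varphi_p^{-1}\,\Tau_{p,t_0}(\varphi(x),x)\big]\,dv_X(x)\,,
\end{equation*}
the trace being fiberwise. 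The function $x\mapsto d^X(\varphi(x),x)$ vanishes exactly on $X^\varphi$ and, by the non-degeneracy \cref{nondegfla} (which makes $\Id-d\varphi$ injective transversally to $X^\varphi$), is comparable to $d^X(x,X^\varphi)$ on a tubular neighborhood; hence \cref{thetafla} makes the integrand $O(p^{-\infty})$ uniformly in $x$ outside $\{d^X(x,X^\varphi)\leq C\epsilon p^{-\theta/2}\}$, so up to $O(p^{-\infty})$ the integral reduces to such a shrinking tube, and the connected components $X_j$ contribute separately.

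First I would fix $j$ and an arbitrary transverse subbundle $N\subset TX|_{X_j}$, pick a tubular neighborhood of $X_j$ subordinate to $N$ and a compatible family of charts \cref{chart} centered at points of $X_j$, so a point $x$ near $X_j$ reads $x=x(x_0,W)$ with $x_0\in X_j$, $W\in N_{x_0}$, with $dv_X(x)=\kappa(x_0,W)\,dv_N(W)\,dv_{X_j}(x_0)$ where $\kappa(\cdot,0)$ realizes the density $|dv|_{TX/N}$ of the statement, and with $x$ represented in the chart at $x_0$ by $Z=W+O(|W|^2)$ and $\varphi(x)$ by $Z'=d\varphi_{x_0}W+O(|W|^2)$ (as $\varphi(x_0)=x_0$). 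I then insert the expansion \cref{expTau} of $p^{-n}\Tau_{p,t_0,x_0}(Z',Z)$ — with $Z'$ in the first slot, $Z$ in the second — and rescale $W=V/\sqrt{p}$. The Jacobian factor $dv_N(W)=p^{-d/2}\,dv_N(V)$ with $d=2n-\dim X_j$, together with the $p^n$ of \cref{expTau}, produces the power $p^{\dim X_j/2}$ of \cref{indeqfle}; the Gaussian decay of $\cT_{t_0,x_0}$ in \cref{locmodT} makes the rescaled $V$-integral converge and its tail beyond $|V|\leq\epsilon p^{(1-\theta)/2}$ negligible, so $V$ may be integrated over all of $N_{x_0}$.

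To read off $\nu_0$ I track the leading term $p^n\,G_{0,t_0,x_0}\,\cT_{t_0,x_0}(\sqrt{p}Z',\sqrt{p}Z)$ of \cref{expTau}: its bundle coefficient is $\bar\mu_{t_0}^{-1}(x_0)\,\tau^E_{t_0,x_0}$ by \cref{|J|0}, with $\bar\mu_{t_0}^{-1}(x_0)=\left(\det(\overline{\Pi}_{t_0}^0)^{-1}\tau_{t_0}^{K_X}\right)^{-1/2}_{x_0}$ by \cref{mubar}; the factor $\varphi_p^{-1}=\varphi^{E,-1}\otimes(\varphi^L)^{-p}$, with $\varphi^L$ of the form $e^{\sqrt{-1}\psi}$ in the $x_0$-trivialization, $\psi$ real, constant on $X_j$ equal to $2\pi\lambda_j$ there, and (a short computation using the radial gauge and $\varphi^*\om=\om$) of the form $\psi=2\pi\lambda_j+O(|W|^3)$ nearby, so after rescaling $(\varphi^L_x)^{-p}=e^{-2\pi\sqrt{-1}p\lambda_j}\big(1+O(p^{-1/2}|V|^3)\big)$; and $\cT_{t_0,x_0}(\sqrt{p}Z',\sqrt{p}Z)\to\cT_{t_0,x_0}(d\varphi V,V)$ for fixed $V$. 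Collecting these gives the overall phase $e^{-2\pi\sqrt{-1}p\lambda_j}$, the fibre trace $\Tr_{E_{x_0}}[\varphi^{E,-1}\tau^E_{t_0}]$, the scalar $\left(\det(\overline{\Pi}_{t_0}^0)^{-1}\tau_{t_0}^{K_X}\right)^{-1/2}_{x_0}$, and the residual Gaussian integral $\int_{N_{x_0}}\cT_{t_0,x_0}(d\varphi V,V)\,dV$, which is exactly the first expression for $\nu_0/|dv|_{TX/N}$ in \cref{nu0}. The higher $\nu_r$ come from carrying the higher $G_{r,t_0,x_0}$ together with the Taylor expansions \cref{Taylordef} of $Z$, $Z'$, $\kappa$, the trivializations and $\psi$, reorganized in powers of $p^{-1/2}$; since $G_r$ has the parity of $r$, $\cT_{t_0,x_0}$ is invariant under $(u,v)\mapsto(-u,-v)$, and $dv_N$ under $V\mapsto-V$, each half-integer power of $p$ carries an odd function of $V$ and integrates to zero, leaving the integer-power expansion \cref{indeqfle} with remainder $O(p^{\dim X_j/2-k})$ controlled by the error bounds in \cref{thetafla,expTau}.

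It remains to obtain the second and third equalities of \cref{nu0} by computing the Gaussian integral $\int_{N_{x_0}}\cT_{t_0,x_0}(d\varphi V,V)\,dV$. By \cref{locmodT} its exponent is $-\pi$ times the quadratic form $\langle\Pi_0^{t_0}(\Id-d\varphi)V,(\Id-d\varphi)V\rangle+\sqrt{-1}\,\om(d\varphi V,V)$ on $N_{x_0}$; writing it as $-\pi\langle\mathcal A V,V\rangle$ for the real inner product $g^{TX}_0$ on $N_{x_0}$ and using $\int_{N_{x_0}}e^{-\pi\langle\mathcal A V,V\rangle}\,dV=(\det_{N_{x_0}}\mathcal A)^{-1/2}$ for the branch of the square root fixed by $\re\mathcal A>0$ — positivity of which, i.e. invertibility of $\Id-d\varphi$ transversally to $X^\varphi$, is precisely \cref{nondegfla} — one identifies $\mathcal A$ with $P^N(\Pi_0^{t_0}-d\varphi^{-1}\overline{\Pi}_{t_0}^0)(\Id_{TX}-d\varphi)P^N$. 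I expect this last linear-algebra step to be the main obstacle: rewriting $\om$ through the $J_0$- and $J_{t_0}$-type projections $\Pi_0^{t_0},\overline{\Pi}_{t_0}^0$, using $\om(d\varphi\cdot,d\varphi\cdot)=\om(\cdot,\cdot)$ and $d\varphi\,J_0=J_{t_0}\,d\varphi$ to factor $(\Id-d\varphi)$ out on the right, and bookkeeping the several transposes and the compression by $P^N$ in the correct order, all with consistent choices of square roots, is delicate; the resulting $\nu_0$ is automatically independent of the auxiliary $N$, since the whole localization above was carried out with $N$ arbitrary, the formula for $\nu_0/|dv|_{TX/N}$ then holding for every transverse $N$.
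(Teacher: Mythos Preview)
The paper does not prove \cref{sctrth} itself: the theorem is quoted from \cite[Th.\,1.2]{Ioo18c} and stated without proof, so there is no in-paper argument to compare against directly. That said, your strategy is exactly the one the author employs for the closely related \cref{gutzwiller} in \cref{gutzsec}: localize the trace integral via the off-diagonal decay \cref{thetafla}, pass to normal coordinates along the fixed-point set, insert the near-diagonal expansion \cref{expTau}, rescale the normal variable by $\sqrt{p}$, Taylor-expand all auxiliary data as in \cref{Taylordef}, and kill odd half-powers by parity. Your handling of the phase $\varphi^L$ near $X_j$ --- writing $\psi=2\pi\lambda_j+O(|W|^3)$ from the radial-gauge formula for $\nabla^L$ and the fact that $\varphi^L$ preserves the connection --- is exactly what the paper does around \cref{nablatriv} in the Gutzwiller proof. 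Your outline is therefore correct and matches the author's own methods as exhibited elsewhere in the paper.

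One small point worth tightening: you assert that $d^X(\varphi(x),x)$ is comparable to $d^X(x,X^\varphi)$ near $X^\varphi$ by non-degeneracy, which is true but deserves a line (it follows from the inverse function theorem applied to $x\mapsto(\pi_{X_j}(x),\varphi(x)-x)$ in a tubular chart, using that $\Id-d\varphi$ is invertible on $N$). For the final linear-algebra identification of the Gaussian determinant with $\det_{N_x}\big[P^N(\Pi_0^{t_0}-d\varphi^{-1}\overline{\Pi}_{t_0}^0)(\Id_{TX}-d\varphi)P^N\big]$, you are right that this is where the work lies; the key identities are $\langle\Pi_0^{t_0}u,v\rangle-\sqrt{-1}\,\om(u,v)=\langle\Pi_0^{t_0}u,v\rangle-\langle\overline{\Pi}_{t_0}^0 u,v\rangle+\langle u,\overline{\Pi}_{t_0}^0 v\rangle$ (from $\om=\langle J_0\cdot,\cdot\rangle$ rewritten through the projectors) together with $\om(d\varphi V,V)=\om(d\varphi V,V)-\om(d\varphi V,d\varphi V)$ to factor out $(\Id-d\varphi)$ on the right, after which pairing against $d\varphi V$ on the left becomes pairing against $V$ via $d\varphi^{-1}$ acting on the projector. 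This is a bookkeeping exercise rather than a conceptual obstacle.
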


The first coefficient \cref{nu0} acquires a geometric interpretation
in the special case when the bundles $TX^\varphi$ and $N$ are
both preserved by $\varphi$ and $J_0$. In order to describe it,
let $\varphi^{K_X}:K_{X,0}\longrightarrow K_{X,{t_0}}$ be
the natural action
induced by $\varphi$, and recall that
$\tau^{K_X}_{t_0}:K_{X,0}\longrightarrow K_{X,{t_0}}$ has been
defined in \cref{tauE}.
Then $\varphi^{K_X,-1}\tau^{K_X}_{t_0}\in\cinf(X,\C)$ via the canonical
identification $K_{X,0}\otimes K_{X,0}^*\simeq\C$, and
one can compute the following.

\begin{prop}\label{loctrcT}
{\cite[Lemma 5.1]{Ioo18c}}
Assume that the fixed point set $X^{\varphi}$ of $\varphi:X\fl X$
is non-degenerate over $X$, and that there exists a subbundle $N$
of $TX$ over $X^\varphi$ transverse to $TX^\varphi$
such that $TX^\varphi$ and $N$ are
both preserved by $\varphi$ and $J_0$. Then
we have the following formula, for all $x\in X^\varphi$,
\begin{multline}\label{loctrcTfla}
\left(\det(\overline{\Pi}_{t_0}^0)^{-1}
\tau_{t_0}^{K_X}\right)^{-\frac{1}{2}}_x
\int_{N_x}\cT_{{t_0},x}(d\varphi.Z,Z)dZ\\
=(-1)^{\frac{\dim N_x}{4}}
(\varphi^{K_X,-1}\tau^{K_X}_{{t_0}})^{-\frac{1}{2}}_x\,
|\det{}_{N_x}(\Id_{N}-d\varphi|_{N})|^{-\frac{1}{2}}\,,
\end{multline}
for some natural choices of square roots.
\end{prop}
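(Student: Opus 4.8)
\emph{Proof plan.} The plan is to reduce \cref{loctrcTfla} to a finite‑dimensional determinant identity over the fibre $N_x$, to prove that identity by a complex Gaussian computation together with \cref{sctrth}, and then to reconcile the square roots appearing on the two sides by a continuity argument.

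\textbf{Step 1: reduction to $N_x$.} First I would observe that, since $\varphi$ and $J_0$ preserve both $TX^\varphi$ and $N$ along $X^\varphi$ and $J_{t_0}=d\varphi\,J_0\,d\varphi^{-1}$, the almost complex structure $J_{t_0}$ — hence also the projectors $\Pi_0^{t_0},\overline{\Pi}_{t_0}^0$ of \cref{Pi0t} — preserve $TX^\varphi$ and $N$. Thus $K_{X,0}$ and $K_{X,t_0}$ split as tensor products of a $TX^\varphi$‑factor and an $N$‑factor, compatibly with $\det(\overline{\Pi}_{t_0}^0)$, $\varphi^{K_X}$ and $\tau_{t_0}^{K_X}$, and the quadratic exponent in \cref{locmodT} respects the splitting $T_xX=T_xX^\varphi\oplus N_x$. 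On the $TX^\varphi$‑factor $d\varphi_x=\Id$ by \cref{nondegfla}, so $J_0=J_{t_0}$ there and the $TX^\varphi$‑parts of $\det(\overline{\Pi}_{t_0}^0)$ and $\varphi^{K_X}$ are the relevant identity maps (scalar $1$); moreover $\tau_{t_0}^{K_X}$ occurs on both sides of \cref{loctrcTfla} and cancels, since $\bigl(\det(\overline{\Pi}_{t_0}^0)^{-1}\tau_{t_0}^{K_X}\bigr)^{1/2}_x\bigl(\varphi^{K_X,-1}\tau_{t_0}^{K_X}\bigr)^{-1/2}_x=\bigl(\det(\overline{\Pi}_{t_0}^0)^{-1}\varphi^{K_X}\bigr)^{1/2}_x$. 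Writing $B:=d\varphi|_{N_x}$ — which has $\Id_N-B$ invertible by non‑degeneracy and is a symplectic automorphism of $(N_x,\om|_{N_x})$ (non‑degenerate because $N_x$ is $J_0$‑invariant, so $\om|_{N_x}=g_0|_{N_x}(\cdot,J_0\cdot)$), whence $\det_{N_x}B=1$ — and noting $J_{t_0}|_{N_x}=B\,J_0|_{N_x}\,B^{-1}$, it then suffices to prove
\[
\int_{N_x}\cT_{t_0,x}(BZ,Z)\,dZ=(-1)^{\dim N_x/4}\bigl(\det\nolimits_{N_x}(\overline{\Pi}_{t_0}^0)^{-1}\varphi^{K_X}\bigr)^{1/2}\bigl|\det\nolimits_{N_x}(\Id_N-B)\bigr|^{-1/2}.
\]

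\textbf{Step 2: the Gaussian integral and a determinant factorisation.} For the left‑hand side I would use \cref{sctrth}: equating the two displayed formulas for $\nu_0/|dv|_{TX/N}$ in \cref{nu0} and cancelling the common factors $\Tr_{E_x}[\varphi^{E,-1}\tau^E_{t_0}]$ and $(\det(\overline{\Pi}_{t_0}^0)^{-1}\tau_{t_0}^{K_X})^{-1/2}_x$ gives $\int_{N_x}\cT_{t_0,x}(BZ,Z)\,dZ=\bigl(\det_{N_x}[P^N(\Pi_0^{t_0}-d\varphi^{-1}\overline{\Pi}_{t_0}^0)(\Id_{TX}-d\varphi)P^N]\bigr)^{-1/2}$ (alternatively one evaluates the integral directly from \cref{locmodT}, using $BZ-Z=(B-\Id)Z$, $\om(BZ,Z)=\om((B-\Id)Z,Z)$, positivity of $\Re\langle\Pi_0^{t_0}\cdot,\cdot\rangle$, and $\int e^{-\pi\langle MZ,Z\rangle}dZ=(\det M)^{-1/2}$). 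Under the present hypothesis $P^N$ acts as the identity and, using $\Pi_0^{t_0}|_N+\overline{\Pi}_{t_0}^0|_N=\Id$ and $\det_{N_x}B=1$, the operator inside factors as $B^{-1}\bigl(B\,\Pi_0^{t_0}|_N-\overline{\Pi}_{t_0}^0|_N\bigr)(\Id_N-B)$, so its determinant is $\det_{N_x}(B\,\Pi_0^{t_0}|_N-\overline{\Pi}_{t_0}^0|_N)\cdot\det_{N_x}(\Id_N-B)$. Decomposing $N_x\otimes\C$ into the $J_0$‑holomorphic and $J_{t_0}$‑antiholomorphic pieces $N^{1,0}_0\oplus N^{0,1}_{t_0}$ adapted to $\Pi_0^{t_0}$, the operator $B\,\Pi_0^{t_0}|_N-\overline{\Pi}_{t_0}^0|_N$ acts as $B\colon N^{1,0}_0\xrightarrow{\sim}N^{1,0}_{t_0}$ on the first summand and as $-\Id$ on the second, so $\det_{N_x}(B\,\Pi_0^{t_0}|_N-\overline{\Pi}_{t_0}^0|_N)=(-1)^{\dim_\C N_x}\det_{N^{1,0}_0}(\Pi_0^{t_0}\circ B|_{N^{1,0}_0})$; and unwinding the definition \cref{KXdef} of $K_X$ and of $\det(\overline{\Pi}_{t_0}^0)$, $\varphi^{K_X}$ (top exterior powers of $\overline{\Pi}_{t_0}^0|_{N^{0,1}_0}$ and of $(d\varphi^{-1})^*|_{(N^{1,0}_0)^*}$, carried over by the isomorphisms $N^{0,1}_t\simeq(N^{1,0}_t)^*$ induced by $g^{TX}_t$) one identifies $\det_{N_x}(\overline{\Pi}_{t_0}^0)^{-1}\varphi^{K_X}$ with $\det_{N^{1,0}_0}(\Pi_0^{t_0}\circ B|_{N^{1,0}_0})^{-1}$. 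Combining these, the identity of Step 1 holds at the level of absolute values.

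\textbf{Step 3: matching the square roots (the main obstacle).} The genuinely delicate point — precisely what the phrase ``for some natural choices of square roots'' is for — is that with naive principal branches the two sides of Step 1 differ by a phase, so one must check that the \emph{natural} square roots agree: the sign $(-1)^{\dim N_x/4}$, the phase of $\det_{N_x}(\Id_N-B)^{-1/2}$ relative to $|\det_{N_x}(\Id_N-B)|^{-1/2}$, and the branch of $\det_{N^{1,0}_0}(\Pi_0^{t_0}\circ B|_{N^{1,0}_0})^{-1/2}$ must combine correctly (recall that the natural root of $\det(\overline{\Pi}_{t_0}^0)^{-1}\tau_{t_0}^{K_X}$ is $\bar\mu_{t_0}$ from \cref{mubar}, and that of $\varphi^{K_X,-1}\tau_{t_0}^{K_X}$ is fixed by the metaplectic lift of $\varphi$). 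I would settle this by continuity: both sides are continuous, nowhere‑vanishing functions of $B$ on the space of symplectic automorphisms of $(N_x,\om|_{N_x})$ with $\Id_N-B$ invertible, their squares coincide by Step 2, so the natural roots are determined by analytic continuation from a base point; concretely, via a symplectic normal‑form decomposition of $B$ into elementary ($2\times2$ elliptic/hyperbolic, or $J_0$‑invariant) blocks one reduces to the rank‑one case and computes the Gaussian integral and the $K_X$‑factor there explicitly — already the dimension‑$2$ hyperbolic model $B=\diag(\lambda,\lambda^{-1})$ exhibits the interplay between $(-1)^{1/2}$, $\det(\Id_N-B)<0$ and the $K_X$‑term. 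I expect all the difficulty to be concentrated in this sign/branch verification, the reduction of Step 1 and the determinant bookkeeping of Step 2 being routine.
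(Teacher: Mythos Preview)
The paper does not contain a proof of this proposition: it is quoted verbatim as \cite[Lemma~5.1]{Ioo18c} and used as a black box, so there is no argument here to compare yours against. I can therefore only assess your proposal on its own merits.

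Your overall plan is sound. Step~1 is correct: the hypothesis that $d\varphi$ and $J_0$ both preserve $TX^\varphi$ and $N$ forces $J_{t_0}$, $\Pi_0^{t_0}$, $\overline{\Pi}_{t_0}^0$ to preserve the splitting as well, the $TX^\varphi$--contributions trivialise since $d\varphi|_{TX^\varphi}=\Id$, and the algebraic cancellation of $\tau_{t_0}^{K_X}$ between the two sides is legitimate because both $\det(\overline{\Pi}_{t_0}^0)^{-1}\tau_{t_0}^{K_X}$ and $\varphi^{K_X,-1}\tau_{t_0}^{K_X}$ are scalars via $K_{X,0}\otimes K_{X,0}^*\simeq\C$. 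In Step~2, reading off the Gaussian integral from the two expressions in \cref{nu0} is fine (these are stated as equal in \cref{sctrth}, itself quoted from \cite{Ioo18c}); your block computation of $\det_{N_x}(B\,\Pi_0^{t_0}-\overline{\Pi}_{t_0}^0)$ is correct once you note that in the input basis $N^{1,0}_0\oplus N^{0,1}_{t_0}$ the matrix is lower block--triangular (not diagonal, since $B(N^{1,0}_0)=N^{1,0}_{t_0}\ne N^{1,0}_0$), with diagonal blocks $\Pi_0^{t_0}\!\circ B|_{N^{1,0}_0}$ and $-\Id$, which still gives your formula.

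Two small points on Step~3. First, your remark that the natural root of $\varphi^{K_X,-1}\tau_{t_0}^{K_X}$ is ``fixed by the metaplectic lift of $\varphi$'' overshoots: the proposition does not assume a metaplectic structure, so you cannot invoke one. What you actually need, and what the phrase ``for some natural choices of square roots'' licenses, is simply that there \emph{exists} a consistent continuous choice making the identity hold --- and your continuity argument over the connected space of admissible symplectic $B$'s, anchored at $B=-\Id$ (or a $J_0$--complex block), delivers exactly that. Second, when you reduce to $2\times2$ blocks, keep in mind that real symplectic matrices with $\Id-B$ invertible need not decompose into such blocks globally; but since only a local continuity check is required, it suffices to verify the sign on a single path of models, which your hyperbolic example already handles.
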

The previous result acquires an even cleaner formulation in the
case when $(E,h^E,\nabla^E)$ satisfies
\begin{equation}\label{metafla}
E^2=K_X\,,
\end{equation}
as a line bundle over $\R\times X$
with induced metric and connection. Such a line bundle exists
if and only if the first Chern class $c_1(TX)\in H^2(X,\Z)$
of $TX$ is even, and the choice of a complex line bundle $E$
satisfying \cref{metafla}
is called a \emph{metaplectic structure} on $X$.
We write $E=:K_X^{1/2}$, and
call it the \emph{metaplectic correction}. We then get the following
straightforward corollary of \cref{loctrcT}.

\begin{cor}\label{loctrcTmeta}
Consider the assumptions of \cref{loctrcT}, and assume
further that $X$ admits a metaplectic structure.
Then if $E=K_X^{1/2}$ is the associated metaplectic correction
over $\R\times X$,
the first coefficient $\nu_{0}$ of
\cref{nu0} satisfies the formula
\begin{equation}
\nu_0=(-1)^{\frac{\dim N}{4}}\,
|\det{}_N(\Id_{N}-d\varphi|_N)|^{-\frac{1}{2}}\,|dv|_{TX/N}\,,
\end{equation}
for some natural choices of square roots.
\end{cor}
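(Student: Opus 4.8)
The goal is to derive \cref{loctrcTmeta} from \cref{loctrcT} by a straightforward substitution, so the plan is mostly bookkeeping about the metaplectic correction.

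First I would recall the general formula \cref{nu0} for the first coefficient $\nu_0$ of \cref{sctrth}. It has three factors: the twisting trace $\Tr_{E_x}[\varphi^{E,-1}\tau^E_{t_0}]$, the canonical-bundle factor $(\det(\overline{\Pi}_{t_0}^0)^{-1}\tau_{t_0}^{K_X})^{-1/2}_x$, and the Gaussian integral $\int_{N_x}\cT_{t_0,x}(d\varphi.Z,Z)\,dZ$. In the metaplectic case $E=K_X^{1/2}$ the bundle $E$ is a line bundle, so $\Tr_{E_x}[\varphi^{E,-1}\tau^E_{t_0}]=\varphi^{E,-1}\tau^E_{t_0}$ is just the scalar value of the endomorphism of the line $E_x$ via $E_x\otimes E_x^*\simeq\C$. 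Squaring this scalar and using $E^2=K_X$ compatibly with metrics and connections gives $(\varphi^{E,-1}\tau^E_{t_0})^2=\varphi^{K_X,-1}\tau^{K_X}_{t_0}$, so $\varphi^{E,-1}\tau^E_{t_0}=(\varphi^{K_X,-1}\tau^{K_X}_{t_0})^{1/2}$ for the square root induced by the metaplectic structure. I would then combine the remaining two factors using \cref{loctrcT}: under the hypothesis that $TX^\varphi$ and $N$ are preserved by $\varphi$ and $J_0$, formula \cref{loctrcTfla} rewrites the product of the canonical-bundle factor and the Gaussian integral as $(-1)^{\dim N_x/4}(\varphi^{K_X,-1}\tau^{K_X}_{t_0})^{-1/2}_x\,|\det_{N_x}(\Id_N-d\varphi|_N)|^{-1/2}$.

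Multiplying the three factors together, the scalar $(\varphi^{K_X,-1}\tau^{K_X}_{t_0})^{1/2}_x$ coming from the line bundle $E$ cancels exactly against the $(\varphi^{K_X,-1}\tau^{K_X}_{t_0})^{-1/2}_x$ produced by \cref{loctrcTfla}, leaving
\begin{equation*}
\frac{\nu_0}{|dv|_{TX/N}}(x)=(-1)^{\frac{\dim N}{4}}\,|\det{}_N(\Id_N-d\varphi|_N)|^{-\frac{1}{2}},
\end{equation*}
which is constant along each connected component since all the quantities involved are (the determinant factor depending only on the restriction of $d\varphi$ to the $\varphi$-invariant bundle $N$). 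This is exactly the claimed formula once one multiplies back by the density $|dv|_{TX/N}$.

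The only genuine subtlety — and the place where I would be most careful — is the compatibility of square roots: one must check that the square root of $\varphi^{K_X,-1}\tau^{K_X}_{t_0}$ \emph{defined by the metaplectic structure} (i.e. coming from the action $\varphi^E$ on $K_X^{1/2}$ and the parallel transport $\tau^E_{t_0}$) is the \emph{same} branch as the one implicitly used in stating \cref{loctrcTfla}, so that the cancellation is on the nose and not merely up to sign. Since both square roots are obtained by restricting a globally defined object over $\R\times X$ (respectively the bundle $E=K_X^{1/2}$ with its connection, and the square root of the nowhere-vanishing function $\varphi^{K_X,-1}\tau^{K_X}_{t_0}$ built from the same geometric data) and since $X^\varphi$ is connected on each component, the two choices agree up to a global sign on each component; tracking this sign is the content of the phrase ``for some natural choices of square roots'' and is the main bookkeeping obstacle, but it involves no new analysis beyond what is already in the proof of \cref{loctrcT} in \cite{Ioo18c}.
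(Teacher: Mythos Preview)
Your proposal is correct and matches the paper's own approach: the corollary is stated there as a ``straightforward corollary of \cref{loctrcT}'', with no further argument given, and your substitution of $E=K_X^{1/2}$ into \cref{nu0} followed by the cancellation of $(\varphi^{K_X,-1}\tau^{K_X}_{t_0})^{\pm 1/2}$ via \cref{loctrcTfla} is exactly the intended bookkeeping. Your remark on the compatibility of square roots is more careful than what the paper spells out, and is precisely what is absorbed into the phrase ``for some natural choices of square roots''.
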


In the sequel, we will write $|\cdot|_p$ for the norm induced
on $E_p\otimes E_p^*$ by $h^{E_p}$, for all
$p\in\N$.


%

\section{Quantum evolution operators}\label{quantevosec}

Let $(X,\om)$ be a compact symplectic manifold without boundary
endowed with
$(L,h^L,\nabla^L)$ satisfying the prequantization
condition \cref{preq}, and consider a smooth function $f\in\cinf(X,\R)$. 
The \emph{Hamiltonian vector field} $\xi_f\in\cinf(X,TX)$ associated
to $f$ is defined by the formula
\begin{equation}\label{hamiltonian}
\iota_{\xi_f}\om=df\,.
\end{equation}
The \emph{Hamiltonian flow} of $f$ is the flow of diffeomorphisms
$\varphi_t:X\rightarrow X$ defined for all $t\in\R$ by
\begin{equation}\label{Hamflow}
\left\{
\begin{array}{l}
  \dt\varphi_t=\xi_{f}\,, \\
  \\
 \varphi_0 = \Id_X\,.
\end{array}
\right.
\end{equation}
By definition \cref{hamiltonian} of a Hamiltonian vector field
and by Cartan formula, the Hamiltonian flow $\varphi_t:X\fl X$
preserves $\om$ for all $t\in\R$.
Let
$\til{\xi}_f\in\cinf(L,TL)$ be the horizontal lift of
$\xi_f$ to the total space of $L$ with respect to
$\nabla^L$, and let $\bold{t}\in\cinf(L,TL)$ be the canonical vector field on the total space of $L$ defined by
\begin{equation}
\bold{t}=\dt\Big|_{t=0}e^{2\pi\sqrt{-1}t}\,,
\end{equation}
for the action of $e^{2\pi\sqrt{-1}t}$ by complex multiplication in the
fibres. Then the flow \cref{Hamflow} lifts to a flow
$\varphi_t^L:L\fl L$
on the total space of $L$ over $X$, defined for all $t\in\R$
by the formula
\begin{equation}\label{liftflow}
\left\{
\begin{array}{l}
  \dt\varphi_t^L=\til{\xi}_{f}+f\bold{t}\,, \\
  \\
 \varphi_0^L = \Id_L\,.
\end{array}
\right.
\end{equation}
Note that both $\varphi_t$ and $\varphi_t^L$ define $1$-parameter
groups, as both vector fields defining them do not depend on
$t\in\R$.
From the definition \cref{hamiltonian} of the Hamiltonian vector field
of $f$, we see that $\varphi_t^L$ is the unique lift of
$\varphi_t$ to $L$ preserving the connection $\nabla^L$, for all
$t\in\R$.
More specifically, for any $t\in\R$, recall the pullback of
$s\in\cinf(X,L)$ by $\varphi_t$ defined by formula \cref{pullbackdef}.
Then for any vector field $v\in\cinf(X,TX)$, we get from
\cref{hamiltonian} and \cref{liftflow} that
\begin{equation}\label{presconn}
\varphi_t^*\nabla_v^Ls=\nabla_{d\varphi_t.v}^L\varphi_t^*s\,.
\end{equation}
On the other hand, we also deduce from \cref{liftflow}
the following \emph{Kostant formula}, for all $t\in\R$,
\begin{equation}\label{Kostantfla}
\dt\varphi^{*}_ts=
\left(\nabla^L_{\xi_{f}}-2\pi\sqrt{-1} f\right)
\varphi^{*}_ts\,.
\end{equation}
For any $p\in\N$ and $t\in\R$, let us write $\varphi_{t,p}$ for the
flow induced by $\varphi_t^L$ on the total space of $L^p$,
and $\varphi_{t,p}^*$ for the associated pullback as in
\cref{pullbackdef}. Then for any $s\in\cinf(X,L^p)$, the Kostant formula
\cref{Kostantfla} becomes
\begin{equation}\label{Kostantflap}
\dt\varphi^*_{t,p}s=
\left(\nabla^{L^p}_{\xi_{f}}-2\pi\sqrt{-1} p f\right)
\varphi^{*}_{t,p}s\,.
\end{equation}
This formula
characterizes $f\in\cinf(X,\R)$ as the \emph{Kostant moment map}
for the action of $\R$ on $(L^p,h^{L^p},\nabla^{L^p})$ induced by
$\varphi_t$ for all $t\in\R$.

Note that \cref{hamiltonian} implies that $f(\varphi_t(x))=f(x)$
for all $t\in\R$ and $x\in X$, and \cref{Hamflow} implies that
$d\varphi_t.\xi_f=\xi_f$ for all $t\in\R$.
For all $x\in X$, we write
\begin{equation}\label{tautp}
\tau_{t,p}:L_x^p\longrightarrow L_{\varphi_t(x)}^p
\end{equation}
for the parallel transport
along the path $s\mapsto\varphi_s(x)$ for $s\in[0,t]$.
We can then reformulate the Kostant formula \cref{Kostantflap} as
\begin{equation}\label{alphaf}
\varphi_{t,p}^{-1}\tau_{t,p}=e^{-2\pi\i tp f}\in\cinf(X,\C)\,,
\end{equation}
via the canonical identification $L\otimes L^*\simeq\C$.

Let us now consider an almost complex structure $J_0\in\End(TX)$ over
$X$ compatible with $\om$. Then for any $t\in\R$, the formula
\begin{equation}\label{J_t}
J_t:=d\varphi_t J_0\,d\varphi_{t}^{-1}\in\End(TX)
\end{equation}
defines a path $\{J_t\in\End(TX)\}_{t\in\R}$ of almost complex
structures over $X$ compatible with $\om$.
For any $t\in\R$, we write $\HH_{p,t}$ for the space of almost
holomorphic sections with respect to $J_t$ defined in
\cref{specdeltapphi}. Then for any $t_0\in\R$ and $p\in\N$,
the pullback \cref{phitcinf}
induces by restriction a bijective linear map
\begin{equation}\label{isoHptHp0}
\varphi^{*}_{t_0,p}:\HH_{p,t+t_0}\longrightarrow\HH_{p,t}\,,
\end{equation}
for all $t\in\R$. For any fixed $p\in\N$, this implies
in particular that the dimension $\dim\HH_{p,t}$ does not depend on
$t\in\R$, so that the quantum bundle
$(\HH_p,h^{\HH_p},\nabla^{\HH_p})$ of \cref{quantbdledef}
is well defined over $\R$ for all $p\in\N$.
%
Recall the tautological fibration $\pi:\R\times X\fl X$
considered in \cref{tautfib}
together with all the data induced by $\{J_t\in\End(TX)\}_{t\in\R}$,
and consider the flow
over $\R\times X$ defined for all $t_0\in\R$ by
\begin{equation}\label{Phitdef}
\begin{split}
\Phi_{t_0}:\R\times X &\longrightarrow\R\times X\\
(t,x)&\longmapsto (t+t_0,\varphi_{t_0}(x))\,.
\end{split}
\end{equation}
For any $p\in\N$ and $t_0\in\R$, the lift $\varphi_{t_0,p}$ of
$\varphi_{t_0}$ to $(L^p,h^{L^p},\nabla^{L^p})$ over $X$ induces
tautologically a lift $\Phi^*_{t_0,p}$ of $\Phi_{t_0}$
to the pullback of $(L^p,h^{L^p},\nabla^{L^p})$ over $\R\times X$
via the second projection.
For any section $s\in\cinf(\R\times X,L^p)$ over $\R\times X$
and any $t\in\R$,
write $s_{t}\in\cinf(X,L^p)$ for the section over $X$ defined
by $s_{t}(x):=s(t,x)$ for all $x\in X$.
Then for any $t_0\in\R$ and $p\in\N$,
the pullback of $s\in\cinf(\R\times X,L^p)$
by $\Phi_{t_0}$ is given for any $t\in\R$ by the formula
\begin{equation}\label{Phitp*}
(\Phi_{t_0,p}^{*}s)_{t}=\varphi_{t_0,p}^*s_{t+t_0}\,.
\end{equation}
By \cref{isoHptHp0}, the pullback $\Phi_{t_0,p}^{*}$
preserves the smooth sections $\cinf(\R,\HH_p)$
of the quantum bundle, seen as a subspace of $\cinf(\R\times X,L^p)$
as in \cref{quantbdledef}.

We still write $\xi_f$ for the pullback of the Hamiltonian vector field
$\xi_f\in\cinf(X,TX)$ to a vertical vector field
over $\pi:\R\times X\fl\R$ via the second projection, and write
$\delt$ for the horizontal vector field over $\pi:\R\times X\fl\R$
induced by the canonical vector field of $\R$.
By definition of the pullback of $(L,h^L,\nabla^L)$ to
$\R\times X$, for any $s\in\cinf(\R\times X,L^p)$ and $t\in\R$ we have
\begin{equation}\label{ident}
\left(\nabla^{L^p}_{\delt} s\right)_t=\dt s_t\,.
\end{equation}
Recall \cref{quantbdledef} for the connection $\nabla^{\HH_p}$,
and note that for all $t,\,t_0\in\R$, we have
\begin{equation}\label{PhitPi=PiPhit}
\Phi^*_{t_0,p}P_{p,t}=P_{p,t+t_0}\Phi^*_{t_0,p}\,.
\end{equation}
Using
\cref{isoHptHp0,Phitdef,Phitp*,ident,PhitPi=PiPhit}, for any
smooth section $s\in\cinf(\R,\HH_p)\subset\cinf(\R\times X,L^p)$
and seeing the orthogonal projection
$P_p:\cinf(\R\times X,L^p)\fl\cinf(\R,\HH_p)$
as a global endomorphism, we get the following
quantized version of the Kostant formula \cref{Kostantflap},
for all $t_0\in\R$,
\begin{equation}\label{KostantflaHp}
\begin{split}
\dt\Big|_{t=t_0}\Phi^{*}_{t,p}s & =
\left(\nabla^{L^p}_{\xi_f}-2\pi\sqrt{-1} pf\right)\Phi^{*}_{t_0,p}s+
\nabla^{L^p}_{\delt}\Phi^{*}_{t_0,p}s\\
& =P_p
\left(\nabla^{L^p}_{\xi_f+\delt}-2\pi\sqrt{-1} pf\right)
\Phi^{*}_{t_0,p}P_p s\\
& =\left(P_p\nabla^{L^p}_{\delt}P_p
+P_p(\nabla^{L^p}_{\xi_f}-2\pi\sqrt{-1} pf)P_p\right)\Phi^{*}_{t_0,p}s\\
& =\left(\nabla^{\HH_p}_{\delt}-2\pi\sqrt{-1} p
P_p\left(f+\frac{\sqrt{-1}}{2\pi p}
\nabla^{L^p}_{\xi_f}\right)P_p\right)\Phi^{*}_{t_0,p}s\,.
\end{split}
\end{equation}
Seeing the path $\{J_t\in\End(TX)\}_{t\in\R}$ as lying in
the space $\JJ_\om$ of almost complex structures compatible with $\om$
and comparing with the usual Kostant formula \cref{Kostantflap},
we can interpret the \emph{quantized Kostant formula}
\cref{KostantflaHp}
by stating that the Kostant--Souriau operator
$Q_p(f)\in\End(\HH_p)$ given by formula \cref{quanthamflaintro}
induces a moment map on the
quantum bundle $\HH_p$ over $\JJ_\om$
for the natural action of the group of Hamiltonian diffeomorphisms
$\textup{Ham}(X,\om)$ on $\JJ_\om$ defined by
\cref{J_t}.

The relevance of the Kostant--Souriau operator
in Kähler geometry goes back
to the work of Cahen, Gutt and Rawnsley \cite{CGR90} relating it to
Berezin's quantization of Kähler manifolds \cite{Ber74},
and the moment map picture described above
has been introduced by Donaldson in \cite{Don01}.
As explained for example in \cite[\S 1]{FU07},
we can consider the line bundle $\det(\HH_p)$ over any compact
submanifold of $\JJ_\om$ for $p\in\N$ big enough,
and the curvature of the connection
$\nabla^{\det\HH_p}$ induced by the $L^2$-connection
\cref{connectionL2}
on $\det(\HH_p)$ defines a natural symplectic form via the
prequantization formula \cref{preq}. In the spin$^c$ Dirac
operator case, which implies the Kähler case, it follows from
the asymptotics of the curvature of $\nabla^{\HH_p}$ as
$p\fl+\infty$
established by Ma and Zhang in \cite[Th.\,2.1]{MZ07}.
Then the quantized Kostant formula \cref{KostantflaHp} shows that
the Hamiltonian flow associated with the function
$\det(Q_p(f))$ on $\JJ_\om$
is precisely the action of the Hamiltonian flow of $f$ on
$\JJ_\om$ defined by \cref{J_t}.

On the other hand, as described for example in
\cite[\S\,9.7]{Woo92},
the quantum dynamics is given by the $1$-parameter family
of unitary operators
generated by the quantum Hamiltonian operator
acting on a \emph{fixed} space of quantum states.
For any $p\in\N$, we thus consider the quantum Hamiltonian
operator $Q_p(f)$ restricted to the space $\HH_{p,0}$ of almost
holomorphic sections with respect to our initial almost
complex structure $J_0$.
This induces a one-parameter family
$\exp\left(2\pi\sqrt{-1}tpQ_p(f)\right)\in\End(\HH_{p,0})$
of unitary operators defined for all $t\in\R$ by
\begin{equation}\label{quantevodef}
\left\{
\begin{array}{l}
  \dt\exp\left(2\pi\sqrt{-1}tpQ_p(f)\right)
  =2\pi\sqrt{-1}pQ_p(f)\exp\left(2\pi\sqrt{-1}tpQ_p(f)\right)\,, \\
  \\
 \exp\left(2\pi\sqrt{-1}tpQ_p(f)\right)\big|_{t=0} = \Id_{\HH_{p,0}}\,.
\end{array}
\right.
\end{equation}
Writing $\Tau_{p,t}:\HH_{p,0}\fl\HH_{p,t}$ for the parallel transport
with respect to $\nabla^{\HH_p}$ over $\R$ as in \cref{setting},
the following Lemma establishes formula \cref{evointro}.

\begin{lem}\label{quantevolem}
For any $p\in\N$ and all $t\in\R$, we have the following 
equality
\begin{equation}\label{quantevo}
\exp\left(-2\pi\sqrt{-1}tpQ_p(f)\right)=\varphi^{*}_{t,p}
\Tau_{p,t}\in\End(\HH_{p,0})\,.
\end{equation}
\end{lem}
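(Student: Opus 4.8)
The plan is to verify that both sides of \cref{quantevo} satisfy the same linear ODE in $t\in\R$ with the same initial condition, hence coincide by uniqueness for linear ODEs in the finite-dimensional space $\End(\HH_{p,0})$. The left-hand side satisfies the defining ODE \cref{quantevodef} by construction, with value $\Id_{\HH_{p,0}}$ at $t=0$. So the real content is to show that $U_t:=\varphi^{*}_{t,p}\Tau_{p,t}\in\cL(\HH_{p,0},\HH_{p,t})\circ\,\cL(\HH_{p,t},\HH_{p,0})=\End(\HH_{p,0})$ (well-defined since $\varphi^{*}_{t,p}:\HH_{p,t}\fl\HH_{p,0}$ is the restriction of the pullback, using \cref{isoHptHp0} with $t_0=t$) satisfies
\begin{equation*}
\dt U_t=-2\pi\sqrt{-1}pQ_p(f)\,U_t\,,\qquad U_0=\Id_{\HH_{p,0}}\,,
\end{equation*}
where $Q_p(f)=P_{p,0}\bigl(f+\tfrac{\sqrt{-1}}{2\pi p}\nabla^{L^p}_{\xi_f}\bigr)P_{p,0}$ on $\HH_{p,0}$ (note: comparing with \cref{quanthamflaintro}, the sign convention here matches because \cref{Kostantflap} carries $-2\pi\sqrt{-1}pf$; I will double-check the sign bookkeeping, as this is the one place an error could hide). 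The initial condition is immediate: $\varphi_{0,p}=\Id$ and $\Tau_{p,0}=\Id$.

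The main step is differentiating $U_t$ in $t$. The cleanest route is to work on the total space $\R\times X$ and use the flow $\Phi_{t_0}$ of \cref{Phitdef} together with the identity \cref{Phitp*}, $(\Phi_{t_0,p}^{*}s)_{t}=\varphi_{t_0,p}^*s_{t+t_0}$. Given $s\in\HH_{p,0}$, extend it to a section $\sigma\in\cinf(\R,\HH_p)$ that is $\nabla^{\HH_p}$-parallel, i.e. $\sigma_0=s$ and $\nabla^{\HH_p}_{\delt}\sigma=0$; then by definition of parallel transport $\sigma_t=\Tau_{p,t}s$ for all $t$. Now apply $\varphi^{*}_{t,p}$: I claim $U_t s=\varphi^{*}_{t,p}\sigma_t=(\Phi^{*}_{t,p}\sigma)_0$ by \cref{Phitp*} with $t_0=t$ (evaluated at the fibre $0$). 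Differentiating at $t=t_0$ and using the quantized Kostant formula \cref{KostantflaHp}, which computes $\dt\big|_{t=t_0}\Phi^{*}_{t,p}\sigma=\bigl(\nabla^{\HH_p}_{\delt}-2\pi\sqrt{-1}pP_p(f+\tfrac{\sqrt{-1}}{2\pi p}\nabla^{L^p}_{\xi_f})P_p\bigr)\Phi^{*}_{t_0,p}\sigma$, and then restricting to the fibre over $0$: the term $\nabla^{\HH_p}_{\delt}\Phi^{*}_{t_0,p}\sigma$ contributes $\varphi^{*}_{t_0,p}$ applied to $\nabla^{\HH_p}_{\delt}\sigma$ pushed around — and here I must use that $\Phi^{*}_{t_0,p}$ intertwines the connection $\nabla^{\HH_p}$ appropriately, equivalently that $\Phi^{*}_{t_0,p}$ commutes with $P_p$ via \cref{PhitPi=PiPhit} and with $\nabla^{L^p}_{\delt}$, so that $\nabla^{\HH_p}_{\delt}\Phi^{*}_{t_0,p}\sigma=\Phi^{*}_{t_0,p}\nabla^{\HH_p}_{\delt}\sigma$. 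Since $\sigma$ was chosen $\nabla^{\HH_p}$-parallel, this term vanishes, leaving exactly $\dt\big|_{t=t_0}U_t s=-2\pi\sqrt{-1}pQ_p(f)\,U_{t_0}s$, as desired.

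The technical heart — and the step I expect to be the main obstacle — is justifying that $\Phi^{*}_{t_0,p}$ commutes with the $L^2$-covariant derivative $\nabla^{\HH_p}_{\delt}=P_p\nabla^{L^p}_{\delt}P_p$, which is not a pointwise fibrewise statement: one needs that $\varphi_{t_0,p}^L$ preserves the connection $\nabla^{L}$ (true by \cref{presconn}, the uniqueness of the connection-preserving lift) \emph{and} that it maps $J_0$-almost-holomorphic sections to $J_{t_0}$-almost-holomorphic ones in a way compatible with the family structure — essentially the statement that $\Phi_{t_0}$ lifts to an automorphism of the quantum bundle, which is exactly the content of the remark after \cref{Phitp*} that $\Phi_{t_0,p}^{*}$ preserves $\cinf(\R,\HH_p)$. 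Granting that, the commutation with $\nabla^{L^p}_{\delt}$ is the assertion that $\Phi_{t_0}$ intertwines the tautological pullback connection along $\delt$ with itself, which follows because the lift $\Phi^{*}_{t_0,p}$ is built tautologically from $\varphi_{t_0,p}$ and the horizontal-lift structure in the $\R$-direction is $t_0$-translation-invariant. I would also need to record that $\varphi^{*}_{t,p}$ is unitary (stated before \cref{sctrth}) and that $\Tau_{p,t}$ is unitary since $\nabla^{\HH_p}$ preserves $h^{\HH_p}$ (noted after \cref{quantbdledef}), so that $U_t$ is indeed a one-parameter family of unitaries on $\HH_{p,0}$, consistent with $Q_p(f)$ being self-adjoint; this is a sanity check rather than a logical necessity for the ODE argument.
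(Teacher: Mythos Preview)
Your proposal is correct and follows essentially the same approach as the paper: both derive the ODE $\dt(\varphi^*_{t,p}\Tau_{p,t})=-2\pi\sqrt{-1}pQ_p(f)\,\varphi^*_{t,p}\Tau_{p,t}$ from the quantized Kostant formula \cref{KostantflaHp} together with the commutation $\Phi^*_{t_0,p}\nabla^{\HH_p}_{\delt}=\nabla^{\HH_p}_{\delt}\Phi^*_{t_0,p}$ (which the paper also isolates as the key technical point, deducing it from \cref{PhitPi=PiPhit} and the triviality of $\nabla^{L^p}$ in the $\delt$-direction), and then conclude by uniqueness of the linear ODE. Your organization via an explicit $\nabla^{\HH_p}$-parallel extension $\sigma$ of $s\in\HH_{p,0}$ is a minor repackaging of the same computation.
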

\begin{proof}
By \cref{quantbdledef} of the $L^2$-connection $\nabla^{\HH_p}$,
using \cref{ident} and the fact that $\Phi_{t_0,p}^*$ commutes with
$P_p$ when acting
on $\cinf(\R\times X,L^p)$ for any $t_0\in\R$ and $p\in\N$,
we have
\begin{equation}
\Phi_{t_0,p}^*\nabla^{\HH_p}_{\delt}
=\nabla^{\HH_p}_{\delt}\Phi_{t_0,p}^*\,.
\end{equation}
Furthermore, as $\varphi_{t_0,p}^*$ commutes with
$\left(\nabla^{L^p}_{\xi_{f}}-2\pi\sqrt{-1} p f\right)$
when acting on $\cinf(X,L^p)$,
by \cref{quanthamflaintro}
and the pullback formula \cref{Phitp*}, we get
\begin{equation}
\Phi_{t_0,p}^*Q_p(f)=Q_p(f)\Phi_{t_0,p}^*\,.
\end{equation}
This implies the following analogue of \cref{alphaf} for the
quantized Kostant formula \cref{KostantflaHp}, for all $t\in\R$,
\begin{equation}\label{deftautphit}
\dt\varphi_{t,p}^{*}\Tau_{p,t}=-2\pi\sqrt{-1} p Q_p(f)\varphi_{t,p}^*\Tau_{p,t}\,,
\end{equation}
which follows from the quantized Kostant formula
\cref{KostantflaHp} in the same way as \cref{alphaf} follows
from the usual Kostant formula \cref{Kostantflap}.
This proves the lemma.
\end{proof}

Before giving the applications of the results described in
\cref{setting} to the quantum evolution
operator defined above, let us illustrates its behaviour via the
following definition.

\begin{defi}\label{cohstatedef}
For any $x_0\in X$ and any unit vector $\zeta\in L_{x_0}$,
the associated \emph{coherent state} is the sequence
$\{s_{x_0,p}\in\HH_{p,0}\}_{p\in\N}$ defined for all $x\in X$ by
\begin{equation}\label{cohstate}
s_{x_0,p}(x)=P_{p,0}(x,x_0)\zeta^p\,,
\end{equation}
where $P_{p,0}(\cdot,\cdot)\in
\cinf(X\times X,L^p\boxtimes (L^p)^*)$ is the Schwartz kernel
with respect to $dv_X$ of the orthogonal projection operator
$P_{p,0}:\cinf(X,L^p)\fl\HH_{p,0}$.
\end{defi} 

Coherent states represent the quantization of a classical
particle located at $x_0\in X$ in phase space.
As one can readily check from the
definition, it is characterized by the fact
that its orthogonal in $\HH_{p,0}$ consists of sections vanishing
at $x_0\in X$. As shown in 
\cite[Th.\,0.1,\,\S\,1.1]{MM08a},
the sequence $\{s_{x_0,p}\}_{p\in\N}$
decreases rapidly as $p\fl+\infty$ outside any open
set containing $x_0$, while $|s_{x_0,p}(x_0)|_{L^p}$ is of order $p^n$.
Now using the following tautological identity of operators
acting on $\cinf(X,L^p)$,
\begin{equation}
\exp\left(2\pi\sqrt{-1}tpQ_p(f)\right)=
\exp\left(2\pi\sqrt{-1}tpQ_p(f)\right)P_{p,0}\,,
\end{equation}
and by the classical formula for the Schwartz kernel of the composition
of two operators, for any $t\in\R$ and $x\in X$, we get from
\cref{cohstatedef},
\begin{equation}\label{kerunit}
\begin{split}
\exp\left(2\pi\sqrt{-1}tpQ_p(f)\right)&s_{x_0,p}(x)\\
&=\int_X\exp\left(2\pi\sqrt{-1}tpQ_p(f)\right)(x,w)
P_{p,0}(w,x_0)\zeta^p\,dv_X(w)\\
&=\exp\left(2\pi\sqrt{-1}tpQ_p(f)\right)(x,x_0)\zeta^p\,.
\end{split}
\end{equation}
This shows that
the last line of \cref{kerunit}, seen
as a section of $L^p$ with respect to the variable $x\in X$,
can be interpreted as the quantum evolution at time $t\in\R$ of
the quantization of a classical particle at $x_0\in X$.
In particular, we expect this section to decrease rapidly
as $p\fl+\infty$ outside any open set containing the classical
evolution $\varphi_t(x_0)\in X$,
while its value at $\varphi_t(x_0)$ should be of order $p^n$.
The following result shows that this is indeed the case.

\begin{prop}\label{Utth}
For any $\epsilon>0$, $k,m\in\N,\,\theta\in\,]0,1[$
and any compact subset $K\subset\R$,
there exists $C_k>0$ such that
\begin{equation}\label{thetacongUt}
\left|\exp\left(2\pi\sqrt{-1}tpQ_p(f)\right)(x,y)\right|_{\CC^m}
\leq C_kp^{-k}
~~\text{as soon as}~~d^X(x,\varphi_t(y))>\epsilon
p^{-\frac{\theta}{2}}\,,
\end{equation}
for all $t\in K$.
Furthermore, there exist $a_r(t,x)\in\C$ for any $r\in\N$, depending
smoothly on $x\in X$ and $t\in\R$, 
such that for any $k\in\N^*$,
\begin{equation}\label{exppsiquant1Ut}
\exp\left(2\pi\sqrt{-1}tpQ_p(f)\right)(\varphi_t(x),x)
=p^{n} e^{2\pi\i tp f(x)}\left(\sum_{r=0}^{k-1}
p^{-r} a_{r}(t,x) + O(p^{-k})\right)\tau_{t,p}\,,
\end{equation}
with first coefficient $a_0$ satisfying the formula
\begin{equation}\label{exppsiquant1coeffUt}
a_0(t,x)^2=\left(\det(\overline{\Pi}_{-t}^0)^{-1}\tau_{-t}^{K_X}\right)_x^{-1}\,.
\end{equation}
In particular, it satisfies $a_0(t,x)\neq 0$ for all $t\in\R$ and
$x\in X$.
\end{prop}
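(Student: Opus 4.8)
The plan is to reduce everything to the near-diagonal and off-diagonal asymptotics of the parallel transport kernel in \cref{BTasy}. Replacing $t$ by $-t$ in \cref{quantevo} of \cref{quantevolem} gives $\exp(2\pi\sqrt{-1}tpQ_p(f))=\varphi^{*}_{-t,p}\,\Tau_{p,-t}\in\End(\HH_{p,0})$. Since $\varphi_{-t}$ preserves the volume form $dv_X$ and $(\varphi^{*}_{-t,p}s)(x)=\varphi_{-t,p}^{-1}s(\varphi_{-t}(x))$, composing with $\Tau_{p,-t}$ and using the one-parameter group identity $\varphi_{-t,p}^{-1}=\varphi_{t,p}$ shows that the Schwartz kernel is
\[
\exp\big(2\pi\sqrt{-1}tpQ_p(f)\big)(x,y)=\varphi_{t,p}\,\Tau_{p,-t}\big(\varphi_{-t}(x),y\big)\ \in\ L^p_x\otimes(L^p_y)^*\,,
\]
for all $x,y\in X$, where $\Tau_{p,-t}(\cdot,\cdot)$ is the kernel from \cref{BTasy} in the case $E=\C$. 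Everything then follows by combining its asymptotics with the smooth diffeomorphism $\varphi_{-t}$ and the unitary bundle map $\varphi_{t,p}$, both of which are $t$-uniformly controlled on a compact set.

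For \cref{thetacongUt}: as $\varphi_{-t}$ is bi-Lipschitz on $X$ uniformly for $t$ in a compact set, the hypothesis $d^X(x,\varphi_t(y))>\epsilon p^{-\theta/2}$ forces $d^X(\varphi_{-t}(x),y)=d^X(\varphi_{-t}(x),\varphi_{-t}(\varphi_t(y)))>c\,\epsilon p^{-\theta/2}$ for some $c>0$ independent of $t$, so after slightly increasing $\theta$ within $\,]0,1[\,$ one may apply the off-diagonal bound \cref{thetafla} to $\Tau_{p,-t}$. The $\CC^m$ version follows from the Leibniz rule applied to $\varphi_{t,p}\,\Tau_{p,-t}(\varphi_{-t}(\cdot),\cdot)$: each derivative hitting $\Tau_{p,-t}(\varphi_{-t}(x),y)$ contributes a factor bounded uniformly in $p$ via the derivatives of $\varphi_{-t}$, while each derivative hitting $\varphi_{t,p}$ produces at worst a factor $p$, since by \cref{alphaf} one has $\varphi_{t,p}=e^{2\pi\sqrt{-1}tpf}\tau_{t,p}$ with $\tau_{t,p}$ the $p$-th tensor power of parallel transport in $L$; replacing $k$ by $k+m$ in \cref{thetafla} then absorbs all such powers.

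For the near-diagonal expansion, specialize the first variable to $\varphi_t(x)$ and the second to $x$; since $\varphi_{-t}(\varphi_t(x))=x$, the kernel equals $\varphi_{t,p}\,\Tau_{p,-t}(x,x)$. Working in the chart of \cref{chart} centered at $x_0=x$ and taking $Z=Z'=0$ in \cref{expTau}: there $\cT_{-t,x}(0,0)=1$, and since each $G_{r,-t,x}$ is a polynomial of parity $r$ one has $G_{r,-t,x}(0,0)=0$ for $r$ odd, so \cref{expTau} yields an expansion $p^{-n}\Tau_{p,-t}(x,x)=\sum_{s\geq 0}p^{-s}\,\til{a}_s(-t,x)+O(p^{-k})$ in integer powers of $p^{-1}$, with $\til{a}_s(-t,x):=G_{2s,-t,x}(0,0)$ smooth in $x$ and $t$ and, by \cref{|J|0} with $E=\C$, $\til{a}_0(-t,x)=\bar{\mu}_{-t}^{-1}(x)$. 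Substituting $\varphi_{t,p}=e^{2\pi\sqrt{-1}tpf}\tau_{t,p}$ via \cref{alphaf}, using $f\circ\varphi_t=f$, and commuting the scalar $\Tau_{p,-t}(x,x)$ past $\tau_{t,p}$ yields \cref{exppsiquant1Ut} with $a_s(t,x)=\til{a}_s(-t,x)$, in particular $a_0(t,x)=\bar{\mu}_{-t}^{-1}(x)$. Finally \cref{mubar} gives $\bar{\mu}_{-t}^2(x)=\big(\det(\overline{\Pi}_{-t}^0)^{-1}\tau_{-t}^{K_X}\big)_x$, which is \cref{exppsiquant1coeffUt}; since the right-hand side is a composite of isomorphisms it never vanishes, so $a_0(t,x)\neq 0$ for all $t\in\R$ and $x\in X$.

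The step I expect to be most delicate is the bookkeeping for the near-diagonal value: correctly interpreting $\varphi_{t,p}\,\Tau_{p,-t}(x,x)$ as an element of $L^p_{\varphi_t(x)}\otimes(L^p_x)^*$, keeping track of which almost complex structure ($J_0$ or $J_{-t}$) governs each tensor factor at the center point, matching the trivialized kernel $\Tau_{p,-t,x_0}(Z,Z')$ of \cref{BTasy} with the intrinsic value $\Tau_{p,-t}(x,x)$ at $Z=Z'=0$, and making sure the parity argument---which is precisely what upgrades the a priori $p^{-1/2}$-expansion to a $p^{-1}$-expansion and isolates the coefficient $G_{0,-t,x}(0,0)$ of \cref{|J|0}---is applied to the correct quantity. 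Once the composition formula above is in place, the remaining estimates are routine consequences of \cref{BTasy}.
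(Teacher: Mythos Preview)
Your proof is correct and follows essentially the same route as the paper: express the kernel as $\varphi_{t,p}\,\Tau_{p,-t}(\varphi_{-t}(x),y)$ via \cref{quantevolem}, apply the off-diagonal bound \cref{thetafla} for the first estimate, then evaluate at $(\varphi_t(x),x)$ to reduce to $e^{2\pi\sqrt{-1}tpf(x)}\tau_{t,p}\,\Tau_{p,-t}(x,x)$ and read off the expansion from \cref{BTasy} at $Z=Z'=0$, using the parity of the $G_r$ to kill half-integer powers and \cref{|J|0},\,\cref{mubar} to identify the leading coefficient. Your treatment of the $\CC^m$-norm via Leibniz and the extra powers of $p$ coming from derivatives of $\varphi_{t,p}$ is more explicit than the paper's, which simply invokes the Kostant formula and \cref{thetafla}; note that the bi-Lipschitz step only rescales $\epsilon$, so no adjustment of $\theta$ is actually needed.
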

\begin{proof}
Recall that for any $x,\,y\in X$ and $t\in\R$,
we get from \cref{quantevolem} and formula
\cref{pullbackdef} that
\begin{equation}\label{kerpsiUt}
\exp\left(-2\pi\sqrt{-1}tpQ_p(f)\right)(x,y)=
\varphi_{t,p}^{-1}\Tau_{p,t}
(\varphi_t(x),y)\,.
\end{equation}
Then \cref{thetacongUt} is a consequence of
the Kostant formula \cref{Kostantfla}, together with
the rapid decrease of $\Tau_{p,t}(\cdot,\cdot)$
outside of the diagonal
given by \cref{thetafla}.

Using the
exponentiation  \cref{alphaf} of Kostant formula,
rewrite \cref{kerpsiUt} as
\begin{equation}
\begin{split}
\exp\left(2\pi\sqrt{-1}tpQ_p(f)\right)(\varphi_t(x),x)&=
\varphi_{t,p}\Tau_{p,-t}
(\varphi_{-t}(\varphi_t(x)),x)\\
&=e^{2\i\pi tp f(x)}\tau_{t,p}
\Tau_{p,-t}(x,x)\,.
\end{split}
\end{equation}
We can thus apply \cref{BTasy} with $x_0=x$
for $Z=Z'=0$, and
noting that $J_{2q+1}(0,0)=0$ for all $q\in\N$ for parity reasons,
we then get the expansion \cref{exppsiquant1Ut},
with first coefficient satisfying
$a_0(t,x)=\bar\mu^{-1}_{-t}(x)$,
for all $x\in X$ and $t\in\R$.
This implies formula \cref{exppsiquant1coeffUt} via
the formula \cref{mubar} for $\mu\in\cinf(X,\C)$.
\end{proof}

Recall the non-degeneracy assumption of \cref{nondegdef}.
We also have the following semi-classical trace formula for the
quantum evolution operator,
where we use the notations of \cref{sctrth}.

\begin{theorem}\label{sctrthcor}
Let $t\in\R$ be such that the fixed point set $X^{\varphi_t}$
of $\varphi_t:X\fl X$
is non-degenerate, and write $\{X_j\}_{1\leq j \leq m}$ 
for the set of its connected components. Then there exist
densities $\nu_{r}$ over $X^{\varphi_t}$
for any $r\in\N$
such that for any $k\in\N^*$ and as $p\fl +\infty$,
\begin{multline}\label{indeqfleUt}
\Tr\left[\exp\left(-2\pi\sqrt{-1}tpQ_p(f)\right)\right]\\
=\sum_{j=1}^q p^{\dim X_j/2}e^{-2\pi\i p\lambda_j}
\left(\sum_{r=0}^{k-1} p^{-r} \int_{X_j}\nu_{r}
+O(p^{-k})\right)\,,
\end{multline}
where $e^{2\pi\i \lambda_j}$ is the constant value of $\varphi^L$
over $X_j$, for some $\lambda_j\in\R$.
Furthermore, we have
\begin{equation}\label{nu0Ut}
\nu_{0}=
\left(\det(\overline{\Pi}_{t}^0)^{-1}
\tau_{t}^{K_X}\right)^{-\frac{1}{2}}
\det{}^{-\frac{1}{2}}_N
\left[P^N(\Pi_{0}^t-d\varphi^{-1}_t\overline{\Pi_t^0})
(\Id_{TX}-d\varphi_t)P^N\right]|dv|_{TX/N}\,,
\end{equation}
for some natural choices of square roots.
\end{theorem}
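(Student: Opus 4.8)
The plan is to recognize the trace in \cref{indeqfleUt} as a special case of the general trace formula of \cref{sctrth} and then to read off both the expansion and its leading coefficient. The first step is to apply \cref{quantevolem}, which gives, for all $p\in\N$ and $t\in\R$,
\[
\exp\left(-2\pi\sqrt{-1}tpQ_p(f)\right)=\varphi_{t,p}^{*}\Tau_{p,t}\in\End(\HH_{p,0})\,.
\]
Hence it suffices to apply \cref{sctrth} with the choice of data $\varphi:=\varphi_t$, $\varphi^L:=\varphi_t^L$ the lift defined by \cref{liftflow}, $t_0:=t$, and $E:=\C$ with the trivial lift $\varphi^E:=\Id$.

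The second step is to verify that this choice meets the hypotheses of \cref{sctrth}. The Hamiltonian flow $\varphi_t$ preserves $\om$ by the Cartan formula, as already noted after \cref{Hamflow}. The lift $\varphi_t^L$ preserves $\nabla^L$ because, as recalled in \cref{quantevosec}, it is by construction the unique lift of $\varphi_t$ with this property; and it preserves $h^L$ because the vector field generating it in \cref{liftflow} is tangent to the unit circle bundle of $(L,h^L)$ --- the horizontal lift $\til{\xi}_f$ since $\nabla^L$ is $h^L$-compatible, and the remaining vertical term since it generates fibrewise rotations. The compatibility relation "$J_{t_0}=d\varphi\,J_0\,d\varphi^{-1}$" required in \cref{sctrth} holds with $t_0=t$ by the very definition \cref{J_t} of the path $\{J_t\}_{t\in\R}$. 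For $E=\C$ the identity map is a lift preserving metric and connection. Finally, $\Tau_{p,t}$ along the path $s\mapsto J_s$, $s\in[0,t]$, is well defined: as observed after \cref{isoHptHp0}, the pullback isomorphisms force $\dim\HH_{p,t}$ to be independent of $t\in\R$, so the quantum bundle of \cref{quantbdledef} exists over all of $\R$; and the non-degeneracy of $X^{\varphi_t}$ is the standing hypothesis here.

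The final step is mere bookkeeping. With these identifications, formula \cref{indeqfle} of \cref{sctrth} becomes \cref{indeqfleUt}, in which $e^{2\pi\sqrt{-1}\lambda_j}$ is the locally constant value of $\varphi_t^L$ over $X_j$ under the canonical identification $L\otimes L^*\simeq\C$; by \cref{alphaf} this scalar is the product of $e^{2\pi\sqrt{-1}tf}$ with the holonomy of $\nabla^L$ around the loop $s\mapsto\varphi_s(x)$ for any $x\in X_j$. Likewise, formula \cref{nu0} becomes \cref{nu0Ut}, since for $E=\C$ the factor $\Tr_{E_x}[\varphi^{E,-1}\tau^E_{t_0}]$ equals $1$ and the remaining factors of \cref{nu0} are precisely those of \cref{nu0Ut} after substituting $\varphi=\varphi_t$ and $t_0=t$. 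I do not anticipate any real obstacle: the content of the argument lies entirely in \cref{quantevolem} and in \cref{sctrth} (hence in \cite{Ioo18c}), the only points needing attention being the identification of the correct lift $\varphi_t^L$ and the verification that the quantum bundle, and therefore $\Tau_{p,t}$, is globally defined over $\R$ in this setting.
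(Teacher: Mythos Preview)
Your proposal is correct and follows exactly the paper's own approach: the paper's proof consists of the single sentence ``Using \cref{quantevolem}, this is a straightforward consequence of \cref{sctrth}.'' Your write-up simply spells out the identifications $\varphi=\varphi_t$, $\varphi^L=\varphi_t^L$, $t_0=t$, $E=\C$ and checks the hypotheses, which is entirely in line with (and more detailed than) what the paper does.
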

\begin{proof}
Using \cref{quantevolem}, this is a straightforward consequence
of \cref{sctrth}.
\end{proof}
The coefficient $\lambda_j\in\R$ appearing in the
expansion \cref{indeqfleUt} has a natural geometric interpretation,
which fits in a more general context.
In fact, note that the evolution equations \cref{Hamflow}
and \cref{liftflow} generalize to \emph{time-dependent Hamiltonians}
$F\in\cinf(\R\times X,\R)$, so that $f\in\cinf(X,\R)$ is replaced
by $f_t\in\cinf(X,\R)$ depending on $t\in\R$, with $f_t(x):=F(t,x)$ for
all $x\in X$. In that case, we also get
a Hamiltonian flow $\varphi_t:X\fl X$ together with a lift
$\varphi_t^L$ to the total space of $L$ preserving metric
and connection. 
The main difference here is that the term
replacing $\left(\nabla^L_{\xi_{f}}-2\pi\sqrt{-1} f\right)$
in the Kostant formula \cref{Kostantfla} will depend on time,
and the corresponding exponentiation as in \cref{alphaf}
at $x\in X$ reads
\begin{equation}\label{alphaft}
(\varphi_{t,p}^{-1}\tau_{t,p})_x=
e^{-2\pi\i p \int_0^t f_s(\varphi_s(x))ds}\,.
\end{equation}
Let now $x\in X$ and $t\in\R$ be such that $\varphi_t(x)=x$, and
via the canonical identification $L_x\otimes L_x^*\simeq\C$,
let us write
\begin{equation}
\varphi_{t,x}^L=:e^{2\i\pi\lambda_t(x)}\,.
\end{equation}
Note that the path $s\mapsto\varphi_s(x)$ defines a loop
$\gamma$ inside $X$. Assuming that this loop bounds an immersed
disk $D\subset X$, by the prequantizaton condition \cref{preq} and
via \cref{alphaft} above, we get
\begin{equation}
\lambda_t(x)=\int_D\om+\int_0^t f_s(\varphi_s(x))ds\,.
\end{equation}
This is a familiar quantity in symplectic topology, called the
\emph{action} of $F$ around the loop $\gamma$. 
Recall that
as $\varphi^L$ preserves the connection $\nabla^L$, the quantity
$\lambda_t(x)\in\R$ is constant when $x\in X$ varies continuously
over a submanifold of fixed points of $\varphi_t$.
This discussion motivates the following definition.

\begin{defi}\label{actiondef}
Let $F\in\cinf(\R\times X,\R)$ be a time-dependent Hamiltonian,
and let $t\in\R$ be such that its Hamiltonian flow $\varphi_t:X\fl X$
at time $t\in\R$ has non-degenerate fixed point set $X^\varphi\subset X$
in the sense of \cref{nondegdef}. Then for any connected component
$Y$ of $X^\varphi$, the associated real number $\lambda_0\in\R$
defined over $Y$ by
\begin{equation}
\varphi_t^L=:e^{2\i\pi\lambda_0}
\end{equation}
is called the \emph{action} of $F$ over $Y$.
\end{defi}
In the general case of a time-dependent Hamiltonian
$F\in\cinf(\R\times X,\R)$, we get a
quantum Hamiltonian as before
from the corresponding quantized Kostant formula \cref{KostantflaHp},
and we can define the associated evolution operator by
equation \cref{quantevodef}. Then the analogue of
\cref{quantevolem} holds in that
case, and the analogues of \cref{Utth} and \cref{sctrthcor}
hold in the same way.

Finally, the situation described above also generalizes to the
case when $(L^p,h^{L^p},\nabla^{L^p})$ over $\R\times X$
is replaced by $E_p=L^p\otimes E$ with induced metric and connection,
where $(E,h^E,\nabla^E)$ is a
Hermitian vector bundle with connection
over $\R\times X$. In that case, we make the further assumption
that the Hamiltonian
flow $\varphi_t:X\fl X$ lifts to a bundle map
$\varphi_t^E:E_0\rightarrow E_t$ over $X$
preserving metric and connection for all $t\in\R$, and we still write
$\varphi_{t,p}$ for the corresponding action on $E_p$ for any $p\in\N$.
Then the analogue of \cref{quantevolem} for the quantum evolution
operator defined by equation \cref{quantevodef} holds as before,
and using the general
setting of \cref{setting}, we also get the corresponding analogues of
\cref{Utth} and \cref{sctrthcor}.
The case of $E=K_X^{1/2}$,
with the lift induced by $\varphi_t^{K_X}:K_{X,0}\fl K_{X,t}$ for all
$t\in\R$, will be of particular interest in the next section.
\section{Gutzwiller trace formula}\label{gutzsec}

Consider the setting of the previous section, with
Hamiltonian $f\in\cinf(X,\R)$ not depending on time,
and a Hermitian vector bundle with connection $(E,h^E,\nabla^E)$
over $\R\times X$, so that $L^p$ is replaced by $E_p=L^p\otimes E$
for all $p\in\N$.
Let $g:\R\fl\R$ be a smooth function with compact support, and
for all $t\in\R$, set
\begin{equation}\label{Fourierg}
\hat{g}(t):=\int_\R g(t)e^{-2\pi\i t}\,dt\,.
\end{equation}
We define
the family of operators $\{\hat{g}(pQ_p(f))\in\End(\HH_p)\}_{p\in\N}$ by
the formula
\begin{equation}\label{psiquant}
\begin{split}
\hat{g}(pQ_p(f)):=\int_\R g(t)
\left(\varphi_{t,p}^*\Tau_{p,t}\right) dt\,.
\end{split}
\end{equation}
In the particular case of $E=\C$, so that \cref{quantevolem} holds,
we get
\begin{equation}\label{psiquantformal}
\hat{g}(pQ_p(f))=\int_\R g(t)\exp(-2\pi\sqrt{-1}tpQ_p(f)) dt\,,
\end{equation}
recovering the usual definition via functional calculus
from \cref{Fourierg}.
Note that formula \cref{psiquantintro} for $\hat{g}(pQ_p(f-c))$
with $c\in\R$
follows from \cref{Hamflow,liftflow,pullbackdef}, as
replacing $f\in\cinf(X,\R)$ by $f-c$
does not change the Hamiltonian flow $\varphi_t:X\fl X$
but multiplies its lift to $L$ by $e^{-2\pi\sqrt{-1}tc}$.
In the context of semi-classical analysis, the
\emph{Gutzwiller trace formula} predicts a semi-classical estimate
for the trace
$\Tr[\hat{g}(pQ_p(f-c))]$ as $p\fl+\infty$, where $c\in\R$ is a regular
value of $f$, showing that it localizes
around the periodic orbits of the Hamiltonian flow of $f$
inside the level set $f^{-1}(c)$.

The following preliminary result shows that the Schwartz kernel of
$\hat{g}(pQ_p(f-c))$ decreases rapidly outside
$f^{-1}(c)$ as $p\fl+\infty$.

\begin{prop}\label{oscvener}
Let $c\in\R$ be a regular value of $f\in\cinf(X,\R)$.
For any $k\in\N$, there exists $C_k>0$ such that
for any $y\in X,\,x\in X\backslash f^{-1}(c)$ and $p\in\N$, we have
\begin{equation}\label{estoscvener}
\left|\hat{g}(pQ_p(f-c))(x,y)\right|_p \leq \frac{C_k}{|f(x)-c\,|^k}
p^{n-\frac{k}{2}}\,.
\end{equation}
\end{prop}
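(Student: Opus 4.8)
The plan is to reduce \cref{estoscvener} to a non-stationary phase estimate in the time variable. By \cref{psiquant} and the discussion following it (or directly by \cref{psiquantintro}), together with the formula for the Schwartz kernel of a composition of operators, exactly as in the computation of \cref{kerunit}, the Schwartz kernel of $\hat g(pQ_p(f-c))$ is
\[
\hat g(pQ_p(f-c))(x,y)=\int_\R g(t)\,e^{2\pi\sqrt{-1}tpc}\,\bigl(\varphi_{t,p}^*\Tau_{p,t}\bigr)(x,y)\,dt\,.
\]
By \cref{pullbackdef} we have $\bigl(\varphi_{t,p}^*\Tau_{p,t}\bigr)(x,y)=\varphi_{t,p}^{-1}\Tau_{p,t}(\varphi_t(x),y)$, and since $f\circ\varphi_t=f$, the Kostant formula \cref{alphaf} (and \cref{alphaft} together with the unitarity of $\varphi_t^E$ in the general $E_p$-case) gives $\varphi_{t,p}^{-1}=e^{-2\pi\sqrt{-1}tpf(x)}\,\tau_{t,p}^{-1}$ as a map $(E_{p,t})_{\varphi_t(x)}\fl(E_{p,0})_x$. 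Hence, writing $A_{t,p}(x,y):=\tau_{t,p}^{-1}\Tau_{p,t}(\varphi_t(x),y)$,
\[
\hat g(pQ_p(f-c))(x,y)=\int_\R g(t)\,e^{2\pi\sqrt{-1}tp(c-f(x))}\,A_{t,p}(x,y)\,dt\,.
\]

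The second ingredient is the amplitude estimate: for every $l\in\N$ and every compact $K\subset\R$ there is $C_l>0$ with $\bigl|\partial_t^l A_{t,p}(x,y)\bigr|_p\leq C_l\,p^{n+l/2}$ for all $t\in K$, $x,y\in X$, $p\in\N$. This follows from \cref{BTasy}. For $d^X(\varphi_t(x),y)>\epsilon p^{-\theta/2}$, both $\Tau_{p,t}(\varphi_t(x),y)$ and all its derivatives are $O(p^{-\infty})$ by \cref{thetafla}; near the diagonal one uses the rescaled expansion \cref{expTau}. Commuting $\partial_t$ through the parallel transport $\tau_{t,p}^{-1}$, one sees that $\partial_t A_{t,p}(x,y)$ equals $\tau_{t,p}^{-1}$ applied to the covariant derivative of the smooth family $\{\Tau_{p,t}(\cdot,y)\}$ along $\partial_t+\xi_f$ at $(t,\varphi_t(x))$. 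The $\partial_t$-part of this derivative is $O(p^n)$ by \cref{expTau} (the time-derivative in \cref{expTau} carries no power of $p$, and the connection of $E_p$ in the time direction is $O(1)$ since $L^p$ is pulled back from $X$), while the $\nabla^{E_p}_{\xi_f}$-part is a covariant derivative of the kernel in the base variable, which by \cref{expTau} corresponds to a derivative in the $\sqrt p$-rescaled variable, hence is $O(p^{n+1/2})$ — the lower-order contribution $p\,\Gamma^L(\xi_f)$ coming from the connection of $L^p$ being tamed to $O(p^{1/2})$ by the Gaussian decay of $\cT_{t,x_0}(\sqrt pZ,\sqrt pZ')$ in $\sqrt p(Z-Z')$ and the vanishing of $\Gamma^L$ at the center of the chart. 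Iterating, $l$ derivatives cost at most a factor $p^{l/2}$ over the base size $p^n$.

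Given these two ingredients, since $f(x)\neq c$ and $g$ has compact support, I integrate by parts $k$ times in $t$, using $e^{2\pi\sqrt{-1}tp(c-f(x))}=\bigl(2\pi\sqrt{-1}p(c-f(x))\bigr)^{-1}\partial_t e^{2\pi\sqrt{-1}tp(c-f(x))}$, with no boundary terms:
\[
\hat g(pQ_p(f-c))(x,y)=\frac{(-1)^k}{\bigl(2\pi\sqrt{-1}p(c-f(x))\bigr)^k}\int_\R \frac{d^k}{dt^k}\bigl[g(t)A_{t,p}(x,y)\bigr]\,e^{2\pi\sqrt{-1}tp(c-f(x))}\,dt\,.
\]
Expanding $\frac{d^k}{dt^k}[g(t)A_{t,p}(x,y)]$ by the Leibniz rule and bounding each term by the amplitude estimate and the compact support of $g$ gives $\bigl|\hat g(pQ_p(f-c))(x,y)\bigr|_p\leq C_k\,|f(x)-c|^{-k}\,p^{\,n-k/2}$, which is \cref{estoscvener}.

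The main obstacle is the amplitude estimate, specifically the power of $p$ gained per $t$-derivative: a priori each covariant derivative along $\xi_f$ costs a full power of $p$ because of the connection of $L^p$, and it is only the localization of the parallel transport kernel at the scale $p^{-1/2}$ furnished by \cref{BTasy} that brings this down to $p^{1/2}$. This is exactly why the exponent in \cref{estoscvener} is $n-k/2$ and not $n-k$.
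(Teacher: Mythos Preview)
Your argument is correct and follows the same overall scheme as the paper: rewrite the kernel using the Kostant formula \cref{alphaf} to isolate the oscillating factor $e^{2\pi i tp(c-f(x))}$, establish the amplitude bound $\bigl|\partial_t^l A_{t,p}(x,y)\bigr|_p\leq C_l\,p^{n+l/2}$, then integrate by parts $k$ times.

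The one genuine difference is in how the amplitude bound is obtained. The paper does not argue via the covariant derivative and the connection form of $L^p$. Instead, for each $t_0$ it chooses the chart \cref{chart} centred at $x_0=\varphi_{t_0}(x)$ so that the radial line in the direction $\xi_{f,x_0}$ is precisely the flow line $s\mapsto\varphi_s(x_0)$. In that chart the radial trivialization of $L^p$ along the flow line \emph{is} $\tau_{t,p}$, so one gets the exact identity
\[
\tau_{t,p}^{-1}\Tau_{p,t_0+t}(\varphi_{t_0+t}(x),y)=\Tau_{p,t_0+t,x_0}(t\xi_{f,x_0},Z)\,,
\]
and \cref{BTasy} applies literally: a $t$-derivative of the right-hand side is a sum of a pure $\partial_t$-derivative (costing no power of $p$) and a $\partial_Z$-derivative in a fixed direction (costing $p^{1/2}$), with no connection term to discuss at all. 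This makes the iteration to order $l$ immediate.

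Your route via $\nabla^{E_p}_{\partial_t+\xi_f}$ also works, and you correctly identify the only delicate point: the term $p\,\Gamma^L_Z(\xi_f)$ is $O(p|Z|)$ in a radial chart, and it is the Gaussian weight $e^{-c\,p|Z-Z'|^2}$ (with the chart centred at $y$, so $Z'=0$) that brings $p|Z|\cdot p^n e^{-cp|Z|^2}$ down to $O(p^{n+1/2})$. For higher $l$ one must then keep track of products and derivatives of connection-form factors; this bookkeeping is doable but left implicit in your sketch. The paper's chart choice simply sidesteps it.
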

\begin{proof}
To simplify notations, we assume $E=\C$, the case of general $E$
being completely analogous.
Recall the definition \cref{tautp} of $\tau_{t,p}$,
and $p\in\N$. For any $x\in X$ and $t_0\in\R$,
consider a chart around $x_0:=\varphi_{t_0}(x)\in X$
as in \cref{chart}, such that the radial line generated by
$\xi_{f,x_0}$ in $B^{T_{x_0}X}(0,\epsilon)$ is sent to the
path $s\mapsto\varphi_s(x_0)$ in $V_{x_0}$.
Then $L^p$ is identified with $L^p_{x_0}$ along this path
by the parallel transport $\tau_{t,p}$, for all $p\in\N$ and
$|t|<\epsilon$. Thus for any $Z\in B^{T_{x_0}X}(0,\epsilon)$ sent to
$y\in V_{x_0}$ in the chart \cref{chart}
and for any $t\in\R$ small enough, we have
\begin{equation}
\tau_{t,p}^{-1}\Tau_{p,t_0+t}(\varphi_{t_0+t}(x),y)=
\Tau_{p,t_0+t,x_0}(t\xi_{f,x_0},Z)\,.
\end{equation}
Using $\tau_{t_0+t,p}=\tau_{t,p}\tau_{t_0,p}$
for all $t_0\in T$ and $|t|<\epsilon$, we can apply
\cref{BTasy} in such charts for all $t_0\in\R$,
so that for any $k\in\N$ we get $C_k>0$ such that for any
$x,\,y\in X,\,t\in\Supp g$ and all $p\in\N$, we have
\begin{equation}\label{majdtTau}
\left|\Dk{k}{t}\tau_{t,p}^{-1}\Tau_{p,t}(\varphi_t(x),y)
\right|_p \leq C_k p^{n+\frac{k}{2}}\,.
\end{equation}
Recall from \cref{hamiltonian} that the Hamiltonian flow of $f$ is
the same as the Hamiltonian flow of $f-c$, for all $c\in\R$.
Then exponentiating Kostant formula as in \cref{alphaf}
and as $\Supp g$ is compact,
we can integrate by parts
to get from the definition \cref{psiquant} of $\hat{g}(pQ_p(f))$ that
for any $x,y\in X$ and $k\in\N$,
\begin{multline}\label{oscvenerfla}
\hat{g}(pQ_p(f-c))(x,y)=\int_\R g(t)\tau_{t,p}^{-1}
\Tau_{p,t}(\varphi_t(x),y) e^{-2\pi\sqrt{-1} tp(f(x)-c)} dt\\
=\frac{1}{(-2\pi\sqrt{-1} p(f(x)-c))^k}\int_\R g(t)
\tau_{t,p}^{-1}\Tau_{p,t}(\varphi_t(x),y) \Dk{k}{t}
e^{-2\pi\sqrt{-1} tp(f(x)-c)} dt\\
=(2\pi\sqrt{-1})^{-k}
\frac{p^{-k}}{(f(x)-c)^k}\int_\R \Dk{k}{t}
\left(g(t)\tau_{t,p}^{-1}\Tau_{p,t}(\varphi_t(x),y)
\right)e^{-2\pi\sqrt{-1} tp(f(x)-c)} dt\,.
\end{multline}
This proves the result by \cref{majdtTau}.
\end{proof}

Let us now estimate the Schwartz kernel of $\hat{g}(pQ_p(f-c))$
as $p\fl+\infty$. \cref{oscvener} shows that it localizes
around the level set $f^{-1}(c)$, in contrast with \cref{Utth}.
In the following theorems and their proofs,
we will use freely the notations of \cref{intro,setting}.

\begin{theorem}\label{exppsiquant}
Let $c\in\R$ be a regular value of $f\in\cinf(X,\R)$.
If $x,\,y\in X$ do not satisfy $\varphi_t(x)=y$ for some $t\in\R$
or do not satisfy $f(x)=f(y)=c$,
then for any $k\in\N$, there exists $C_k>0$ such that for all
$p\in\N$,
\begin{equation}\label{exppsiquant3}
\left|\hat{g}(pQ_p(f-c))(x,y)\right|_p<C_k p^{-k}\,.
\end{equation}
Let $x,\,y\in f^{-1}(c)$, and write $T:=\{t\in\R~|~\varphi_t(x)=y\}$.
Then there exist $b_{t_0,r}\in\C$ for all $r\in\N$ and $t_0\in T$
such that for any $k\in\N^*$ and as $p\fl+\infty$,
\begin{equation}\label{exppsiquant1}
\hat{g}(pQ_p(f-c))(x,y)=p^{n-\frac{1}{2}}\sum_{t_0\in T}g(t_0)
\left(\sum_{r=0}^{k-1} p^{-r}b_{t_0,r} + O(p^{-k})\right)
\left(\varphi^{E,-1}_{t_0}\tau^E_{t_0}\right)\tau_{t_0,p}^{-1}\,.
\end{equation}
Furthermore, for any $t_0\in T$, we have
\begin{equation}\label{exppsiquant1coeff}
b_{t_0,0}^2=
\left(\det(\overline{\Pi}_{t_0}^0)^{-1}\tau^{K_X}_{t_0}\right)^{-1}_x
\<\Pi_0^{t_0}\xi_{f,x},\xi_{f,x}\>_{g^{TX}_0}^{-1}\,.
\end{equation}
In particular, it satisfies $b_{t_0,0}\neq 0$, for all $t_0\in T$.
\end{theorem}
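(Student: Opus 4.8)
The plan is to express the Schwartz kernel of $\hat g(pQ_p(f-c))$ as an oscillatory integral in $t$ and apply the stationary phase method, using the local expansion \cref{BTasy} of the parallel transport kernel $\Tau_{p,t}(\cdot,\cdot)$ as the amplitude. Starting from \cref{psiquant} and exponentiating the Kostant formula as in \cref{oscvenerfla}, we write, for $x,y\in X$,
\begin{equation}
\hat g(pQ_p(f-c))(x,y)=\int_\R g(t)\,\tau_{t,p}^{-1}\Tau_{p,t}(\varphi_t(x),y)\,e^{-2\pi\sqrt{-1}tp(f(x)-c)}\,dt\,.
\end{equation}
By \cref{oscvener} the whole expression is $O(p^{-\infty})$ unless $f(x)=c$, and by the off-diagonal decay \cref{thetafla} the integrand is $O(p^{-\infty})$ unless $d^X(\varphi_t(x),y)$ is of order $p^{-\theta/2}$. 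Since $f$ is a Hamiltonian invariant, $f(y)=f(\varphi_t(x))=f(x)=c$ as well, and the pair $(x,y)$ contributes only if $y=\varphi_{t_0}(x)$ for some $t_0\in T$. This proves \cref{exppsiquant3} and reduces the analysis to $x,y\in f^{-1}(c)$ with $y=\varphi_{t_0}(x)$, $t_0\in T$; since $c$ is a regular value, $\xi_{f,x}\neq 0$, so near each such $t_0$ the set $T$ is discrete and we may localize $g$ to a small neighborhood of $t_0$.

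Next I would fix $t_0\in T$, set $x_0:=y=\varphi_{t_0}(x)$, and work in a chart of the form \cref{chart} centered at $x_0$ chosen (as in the proof of \cref{oscvener}) so that the radial direction $\xi_{f,x_0}$ is sent to the orbit $s\mapsto\varphi_s(x_0)$. Then for $t=t_0+u$ with $|u|$ small, $\varphi_t(x)$ corresponds to $Z=u\,\xi_{f,x_0}+O(u^2)$ in the chart, and using $\tau_{t,p}=\tau_{u,p}\tau_{t_0,p}$ the kernel becomes $\tau_{t_0,p}^{-1}\Tau_{p,t_0+u,x_0}(u\xi_{f,x_0},0)\cdot(\text{lower-order corrections})$. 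Plugging in the expansion \cref{expTau}–\cref{|J|0} of $\Tau_{p,t,x_0}$ gives an amplitude of the form $p^{n}\sum_{r}p^{-r/2}G_r\cT_{t_0,x_0}(\sqrt p\,u\xi_{f,x_0},0)+O(p^{-\infty})$, where by \cref{locmodT}
\begin{equation}
\cT_{t_0,x_0}(\sqrt p\,u\xi_{f,x_0},0)=\exp\!\big(-\pi p\,u^2\langle\Pi_0^{t_0}\xi_{f,x_0},\xi_{f,x_0}\rangle\big)\,,
\end{equation}
a genuine Gaussian in $\sqrt p\,u$ since the real part of $\langle\Pi_0^{t_0}\xi_{f,x_0},\xi_{f,x_0}\rangle$ is positive (here one uses that $\Pi_0^{t_0}$ projects onto $T^{(1,0)}X_0$ along $T^{(0,1)}X_{t_0}$, both complementary to the fixed $J_0$-splitting, so the quadratic form has positive real part on real vectors). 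Substituting $u=v/\sqrt p$, the phase $e^{-2\pi\sqrt{-1}tp(f(x)-c)}=e^{-2\pi\sqrt{-1}t_0p(f(x)-c)}$ is constant in $v$ at leading order (and contributes the overall $f(x)=c$ localization; after rescaling the phase variation is $O(\sqrt p\cdot p^{-1/2})=O(1)$ and enters only the subleading $b_{t_0,r}$), and we obtain
\begin{equation}
\hat g(pQ_p(f-c))(x,y)=p^{n}g(t_0)\,p^{-1/2}\!\int_\R \Big(\sum_r p^{-r/2}\tilde G_r(v)\Big)e^{-\pi v^2\langle\Pi_0^{t_0}\xi_f,\xi_f\rangle}\,dv\cdot\big(\varphi_{t_0}^{E,-1}\tau^E_{t_0}\big)\tau_{t_0,p}^{-1}+\cdots\,,
\end{equation}
where the Gaussian integral produces the factor $\langle\Pi_0^{t_0}\xi_f,\xi_f\rangle^{-1/2}$. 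Matching the $E$-endomorphism part: \cref{|J|0} gives $G_{0,t_0,x_0}=\bar\mu_{t_0}^{-1}(x_0)\tau^E_{t_0,x_0}$ for the $E=\C$ scalar $\bar\mu_{t_0}$, and tracking the pullback $\varphi_{t_0,p}$ through $\varphi_{t,p}^*$ produces the stated $\varphi_{t_0}^{E,-1}\tau^E_{t_0}$ and $\tau_{t_0,p}^{-1}$ factors. Collecting the leading term and using \cref{mubar} to write $\bar\mu_{t_0}^{-2}(x)=\det(\overline\Pi_{t_0}^0)^{-1}\tau_{t_0}^{K_X}$ at $x$ (note $x_0=\varphi_{t_0}(x)$ but $f$ and the relevant data transport correctly along the orbit; the careful check of which base point the determinant is evaluated at is one of the bookkeeping points), we obtain the power $p^{n-1/2}$, the sum over $t_0\in T$ from summing the contributions, and
\begin{equation}
b_{t_0,0}^2=\bar\mu_{t_0}^{-2}(x)\,\langle\Pi_0^{t_0}\xi_{f,x},\xi_{f,x}\rangle^{-1}=\Big(\det(\overline\Pi_{t_0}^0)^{-1}\tau^{K_X}_{t_0}\Big)^{-1}_x\langle\Pi_0^{t_0}\xi_{f,x},\xi_{f,x}\rangle_{g^{TX}_0}^{-1}\,,
\end{equation}
which is \cref{exppsiquant1coeff}; since both factors are nonzero (the first by \cref{Utth}, the second because $\xi_{f,x}\neq 0$ at a regular value), $b_{t_0,0}\neq 0$, and the existence of the full asymptotic expansion \cref{exppsiquant1} follows by carrying the stationary phase expansion to all orders using the polynomial amplitudes $G_r$ and the uniform remainder bound in \cref{expTau}.

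The main obstacle I anticipate is \textbf{not} the stationary phase machinery itself but the careful identification of the leading coefficient — specifically, getting the geometric factors right: tracking which base point ($x$ versus $\varphi_{t_0}(x)$) each of $\det(\overline\Pi_{t_0}^0)$, $\tau_{t_0}^{K_X}$, $\xi_f$, and the various parallel transports $\tau_{t_0,p}$, $\tau^E_{t_0}$, $\varphi_{t_0}^E$ is evaluated at, and checking that the chart normalization (radial line along $\xi_f$) combined with the rescaling $u\mapsto v/\sqrt p$ does not introduce spurious Jacobian factors. A secondary technical point is justifying that the remainder terms in \cref{expTau} — which are controlled only for $|Z|<\epsilon p^{-\theta/2}$ — indeed contribute $O(p^{-\infty})$ after integration in $t$, which follows by combining the cutoff from \cref{thetafla} with the compact support of $g$, exactly as in the proof of \cref{oscvener}.
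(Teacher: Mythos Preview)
Your proposal is correct and essentially identical to the paper's proof: localize via \cref{oscvener} and \cref{thetafla}, work in a chart around $y$ with the radial line along $\xi_f$, apply \cref{BTasy}, Taylor-expand the amplitude in $t$, and compute the resulting Gaussian integral after rescaling $t\mapsto t/\sqrt{p}$. The only detail you leave implicit is the parity argument (the paper's \cref{odd}) showing that the half-integer powers $p^{-r/2}$ drop out so that only integer powers $p^{-r}$ survive in \cref{exppsiquant1}; also note that once you have reduced to $x\in f^{-1}(c)$ the phase $e^{-2\pi\sqrt{-1}tp(f(x)-c)}$ is identically $1$, so your remark about its ``leading-order'' behavior is unnecessary.
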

\begin{proof}
Note first that the estimate \cref{exppsiquant3} is a
straightforward consequence of either \cref{BTasy} or \cref{oscvener}
respectively.

To establish the expansion \cref{exppsiquant1},
fix $x,\,y\in X$ such that $f(x)=f(y)=c\in\R$ is a regular value of
$f$. Then $f(x)-c=0$, and exponentiating the Kostant formula
as in \cref{alphaf}, we can
write the Schwartz kernel of $\hat{g}(pQ_p(f-c))$ as
\begin{equation}\label{kerpsi}
\hat{g}(pQ_p(f-c))(x,y)=\int_\R g(t)\tau_{t,p}^{-1}\varphi^{E,-1}_t
\Tau_{p,t}(\varphi_t(x),y) dt\,.
\end{equation}
As $c\in\R$ is a regular value of $f$,
the Hamiltonian vector field $\xi_f$ does not vanish over
$f^{-1}(c)$.
By \cref{thetafla}, this implies that
for any $\theta\in\,]0,1[$ and $\epsilon>0$,
we get the following estimate as $p\fl+\infty$,
\begin{equation}\label{locpath}
\hat{g}(pQ_p(f-c))(x,y)=\sum_{t_0\in T} \int_{t_0-\epsilon p^{-\frac{\theta}{2}}}^{t_0+\epsilon p^{-\frac{\theta}{2}}} g(t)
\tau_{t,p}^{-1}\varphi^{E,-1}_t
\Tau_{p,t}(\varphi_t(x),y) dt+O(p^{-\infty})\,,
\end{equation}
where all terms but a finite number
vanish by compacity of $\Supp g$.

Consider a chart around $y$ as in \cref{chart},
sending the radial line generated by $\xi_{f,y}$ in
$B^{T_yX}(0,\epsilon)$
to the path $s\mapsto\varphi_s(y)$ in $V_y$.
Then $L^p$ is identified with $L^p_y$ along this path
by the parallel transport $\tau_{t,p}$, for all $p\in\N$ and
$|t|<\epsilon$. Using $\tau_{t_0+t,p}=\tau_{t,p}\tau_{t_0,p}$
for all $t_0\in T$ and $|t|<\epsilon$, we can then
apply \cref{BTasy} to get a family
$\{G_{r,t,y}(Z,Z')\in E_{t,y}\otimes E_{0,y}^*\}_{r\in\N}$
of polynomials in $Z,Z'\in T_yX$
of the same parity as $r$ and smooth in $t\in\R$,
such that for any $\delta\in\,]0,1[$ and $k\in\N^*$, there is
$\theta\in\,]0,1[$ such that for all $p\in\N$,
\begin{multline}\label{kergq}
\hat{g}(pQ_p(f-c))(x,y) =\sum_{t_0\in T} p^n \tau_{p,t_0}^{-1}
\int_{-\epsilon p^{-\frac{\theta}{2}}}^{\epsilon p^{-\frac{\theta}{2}}}
g(t_0+t)\varphi^{E,-1}_{t_0+t}\\
\sum_{r=0}^{k-1} p^{-\frac{r}{2}}
G_r\cT_{t_0+t,y}(\sqrt{p}t\xi_{f,y},0) dt
+ p^{n-\frac{\theta}{2}}O(p^{-\frac{k}{2}+\delta})\,.
\end{multline}
%
Consider the right hand side of \cref{kergq},
and let us
apply the Taylor expansion in $t\in\R$
described in \cref{Taylordef}
on all terms depending on $t\in\R$
respectively inside and outside the exponential of
the local model \cref{locmodT} for $\cT_{t_0+t,y}$.
We then get $F_{r,t_0}(t)\in E_{t_0}\otimes E_0^*$, polynomial in
$t\in\R$ and of the same parity as $r$ for any $r\in\N$,
such that for all $t_0\in T$ and as $|t|\fl 0$,
\begin{multline}\label{Taylorexp}
\sum_{r=0}^{k-1} p^{-\frac{r}{2}}g(t_0+t)\varphi^{E,-1}_{t_0+t}
G_r\cT_{t_0+t,y}(\sqrt{p}t\xi_{f,y},0)=\\
g(t_0)\varphi^{E,-1}_{t_0}
\sum_{r=0}^{k-1}p^{-r/2}F_{r,t_0}(\sqrt{p}t)
\exp\left(-p\pi\<\Pi_0^{t_0}\xi_{f,y},\xi_{f,y}\>t^2\right)
+p^{-\frac{k}{2}}O(|\sqrt{p}t|^{M_k})\,,
\end{multline}
for some $M_k\in\N^*$ depending on the degrees of the polynomials
$G_{r,t_0+t,y}$ for all $|t|<\epsilon$, $t_0\in T\cap\Supp g$
and $1\leq r\leq k$. Furthermore, from the formula
\cref{|J|0} for the first coefficient $G_{0,t,y}$, we know that
$F_{0,t_0}(t)=\bar\mu_{t_0}^{-1}(y)\tau_{t_0,y}^E$
for all $t\in\R$.
Note that by the definition \cref{Pi0t} of $\Pi_0^{t_0}$,
the real part of
$\<\Pi_0^{t_0}\xi_{f},\xi_{f}\>$ is strictly positive,
so that the right hand side of
\cref{Taylorexp} decreases  
exponentially in $t\in\R$. 
Writing
\begin{equation}\label{delta'}
\delta_k=\delta+\frac{M_k(1-\theta)}{2}\,,
\end{equation}
and after a change of variable $t\mapsto t/\sqrt{p}$, we then get
\begin{multline}\label{kergq1}
\hat{g}(pQ_p(f-c))(x,y)=
\sum_{t_0\in T}g(t_0)\bar{\mu}_{t_0}^{-1}(y)
\varphi^{E,-1}_{t_0}\tau^E_{t_0}
\tau_{p,t_0}^{-1}
p^{n-\frac{1}{2}}\\
\sum_{r=0}^{k-1} p^{-\frac{r}{2}}
\int_\R F_{r,t_0}(t)\exp\left(-\pi\<\Pi_0^{t_0}\xi_{f,y},
\xi_{f,y}\>t^2\right) dt+p^{n-\frac{1}{2}}O(p^{-\frac{k}{2}+\delta_k})\,.
\end{multline}
As $F_{2q+1,t_0}(t)$ is odd as a function of $t\in\R$ for all
$q\in\N$, we get
\begin{equation}\label{odd}
\int_\R F_{2q+1,t_0}(t)\exp\left(-\pi\<\Pi_0^{t_0}\xi_{f,y},
\xi_{f,y}\>t^2\right) dt=0\,.
\end{equation}
As $\delta_k\fl \delta$ when $\theta\fl 1$ by \cref{delta'}
and as $\delta$ can be chosen arbitrary small,
this gives the expansion \cref{exppsiquant1},
and we get the formula \cref{exppsiquant1coeff} for the first
coefficient via the classical formula for Gaussian integrals,
using the formula \cref{mubar} and the fact that
$F_{0,t_0}(t)=\bar\mu_{t_0}^{-1}(y)\tau_{t_0}^E$ for all $t\in\R$.
\end{proof}


Consider now the hypotheses and notations of
\cref{gutzwillerperorbintro}. Recall that
$c\in\R$ is a regular
value of $f$, so that $\Sigma:=f^{-1}(c)$ is a smooth manifold.
Then 
there exists $\epsilon>0$
and diffeomorphisms
\begin{equation}\label{idgal}
\Psi_{t_0}:
f^{-1}(]c-\epsilon,c+\epsilon[)\xrightarrow{~~\sim~~}\,
]c-\epsilon,c+\epsilon[\,\times\,\Sigma\,
\end{equation}
for all $t_0\in T$,
such that $f(u,x)=u$ for all
$(u,x)\in\,]c-\epsilon,c+\epsilon[\,\times\,\Sigma$
under this identification, and such that
\begin{equation}\label{idgalYt}
\Psi_{t_0}\left(X^{\varphi_{t_0}}
\cap f^{-1}(]c-\epsilon,c+\epsilon[)\right)
=\bigcup_{\substack{1\leq j\leq m\\ t_j=t_0}}~
]c-\epsilon,c+\epsilon[\,\times\, Y_j\,,
\end{equation}
where the connected components $Y_j$ of
$X^{\varphi_{t_j}}\cap f^{-1}(c)$
are seen as submanifolds of $\Sigma$, for all $1\leq j\leq m$.
We endow $\Sigma$ with the Riemannian metric $g^{T\Sigma}$ induced
by $g^{TX}_0:=\om(J_0\cdot,\cdot)$ via the inclusion
$\Sigma=f^{-1}(c)\subset X$.
For any $1\leq j\leq m$, let $|dv|_{Y_j}$
be the Riemannian density over $Y_j$ induced by $g^{T\Sigma}$
and let $N$ be the normal bundle
of $Y_j$ inside $\Sigma$. We write $P^N:T\Sigma\fl N$ for the
orthogonal projection with respect to $g^{T\Sigma}$ over $Y_j$
for all $1\leq j\leq m$.

Recall that the \emph{Liouville measure}
on the level set $f^{-1}(c)$ is induced by the volume form
\begin{equation}\label{liouvilleham}
\frac{\iota_{v}\,\om^n}{(n-1)!}\in\Om^{2n-1}(f^{-1}(c),\R)\,,
\end{equation}
for any $v\in\cinf(f^{-1}(c),TX)$ satisfying $\om(\xi_f,v)=1$,
and does not depend on such a choice. We write $\Vol_\om(f^{-1}(c))>0$
for the volume of $f^{-1}(c)$ with respect to \cref{liouvilleham}.
Recalling \cref{actiondef},
the following theorem is a version of the
\emph{Gutzwiller trace formula} in geometric quantization of
compact prequantized symplectic manifolds, and
is the main result of this section.

\begin{theorem}\label{gutzwiller}
Under the above assumptions, there exist $b_{j,r}\in\C$
for all $r\in\N$ and $1\leq j\leq m$,
such that for any $k\in\N^*$, we have as $p\fl+\infty$,
\begin{equation}\label{gutzexp}
\Tr\left[\hat{g}(pQ_p(f-c))\right]=\sum_{j=1}^m
p^{(\dim Y_j-1)/2}g(t_j)
e^{-2\pi\i p \lambda_j}\left(\sum_{r=0}^{k-1}
p^{-r}b_{j,r}+O(p^{-k})\right)\,,
\end{equation}
where $\lambda_j\in\C$ is the action of $f$
over $Y_j$. Furthermore, for all $1\leq j\leq m$ we have
\begin{multline}\label{b0gutz}
b_{j,0}=\int_{Y_j}\Tr_{E}[\varphi^{E,-1}_{t_j}\tau^{E}_{t_j}]\,
\left(\det(\overline{\Pi}_{t_j}^0)^{-1}
\tau_{t_j}^{K_X}\right)^{-\frac{1}{2}}\\
\det{}^{-\frac{1}{2}}_{N}
\left[P^N(\Pi_{0}^{t_j}-d\varphi^{-1}_{t_j}\overline{\Pi}_{t_j}^0)
(\Id_{TX}-d\varphi_{t_j})P^N\right]\frac{|dv|_{Y_j}}{|\xi_f|_{g^{TX}_0}}
\,,
\end{multline}
for some natural choices of square roots.
In particular, we have as $p\fl+\infty$,
\begin{equation}\label{Weylterm}
\Tr\left[\hat{g}(pQ_p(f-c))\right]=p^{n-1}g(0)\rk(E)
\Vol_\om(f^{-1}(c))
+O(p^{n-2})\,.
\end{equation}
\end{theorem}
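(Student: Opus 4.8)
The plan is to express $\Tr[\hat g(pQ_p(f-c))]$ as a double integral over base point and time, to localize it near the sets $\{t_j\}\times Y_j$ using the off-diagonal decay of $\Tau_{p,t}(\cdot,\cdot)$ together with an integration by parts in $t$, and then to evaluate each contribution by Gaussian and stationary-phase integrals. This runs parallel to the proof of \cref{sctrth} in \cite{Ioo18c} --- which handles the integration over the directions normal to $X^{\varphi_{t_j}}$ --- and to the proof of \cref{exppsiquant} above --- which handles the time integration. Concretely, combining the trace formula \cref{Trfla} with the representation \cref{oscvenerfla} of the diagonal of the Schwartz kernel of $\hat g(pQ_p(f-c))$, one has, with the conventions of the proof of \cref{exppsiquant},
\begin{multline*}
\Tr\big[\hat g(pQ_p(f-c))\big]\\
=\int_X\!\!\int_\R g(t)\,e^{-2\pi\i tp(f(x)-c)}\,\Tr_{E_x}\!\big[\tau_{t,p}^{-1}\varphi^{E,-1}_t\,\Tau_{p,t}(\varphi_t(x),x)\big]\,dt\,dv_X(x).
\end{multline*}

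First, I would carry out the localization. On $\Supp g$, the estimate \cref{thetafla} shows that the integrand is $O(p^{-\infty})$ unless $d^X(\varphi_t(x),x)\leq\epsilon p^{-\theta/2}$; together with the non-degeneracy of $X^{\varphi_t}$ over a neighborhood of $f^{-1}(c)$, this confines $x$ to an $O(p^{-\theta/2})$-neighborhood of $X^{\varphi_t}$. Since $X^{\varphi_t}\cap f^{-1}(c)=\emptyset$ for $t\in\Supp g\setminus T$, uniformly in $t$ by a compactness argument, on that neighborhood the quantity $f(x)-c$ stays bounded away from $0$ for such $t$, so a repeated integration by parts in $t$ --- as in the proof of \cref{oscvener} --- shows that these values of $t$ contribute $O(p^{-\infty})$. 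Hence, modulo $O(p^{-\infty})$, the integral reduces to a finite sum over $1\leq j\leq m$ of contributions localized in an arbitrarily small neighborhood of $\{t_j\}\times Y_j$.

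Next, I would fix $j$ and work, as in the proofs of \cref{oscvener} and \cref{exppsiquant}, in a chart of the form \cref{chart} centred at the point $x_0\in Y_j$ nearest to $x$, adapted to the Hamiltonian flow and trivializing $L^p$ along the orbit of $x_0$ by parallel transport. Writing $t=t_j+s$, $x=\exp_{x_0}(Z)$ and decomposing $Z=Z_\parallel+Z_\perp+Z_u$ according to $T_{x_0}X=T_{x_0}Y_j\oplus N_{x_0}\oplus\R\,(\grad f)_{x_0}$ --- where $N$ is the normal bundle of $Y_j$ in $\Sigma:=f^{-1}(c)$ and $Z_u$ is the $f$-direction component --- one substitutes the local model \cref{expTau} for $\Tau_{p,t_j+s,x_0}$ and uses that $\Id-d\varphi_{t_j}$ vanishes on $T_{x_0}X^{\varphi_{t_j}}\supset T_{x_0}Y_j$ and is invertible on $N_{x_0}$. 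The $Z_\parallel$-directions are then (to leading order) free and produce the integral $\int_{Y_j}$; the $Z_\perp$-directions produce a genuine Gaussian which, after the remaining integrations, reduces to $\int_{N_{x_0}}\cT_{t_j,x_0}(d\varphi_{t_j}.Z_\perp,Z_\perp)\,dZ_\perp$, computed exactly as in \cite[Lemma 5.1]{Ioo18c} and in the proof of \cref{sctrth}; and the pair $(Z_u,s)$, together with the oscillatory factor $e^{-2\pi\i(t_j+s)p(f(x)-c)}$ and the imaginary part $\i\om(\cdot,\cdot)$ of \cref{locmodT}, produces a non-degenerate stationary-phase integral whose Hessian contributes the factor $|\xi_f|^{-1}_{g^{TX}_0}$, via $\iota_{\xi_f}\om=df$ from \cref{hamiltonian}. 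After rescaling $s,Z_\perp,Z_u$ by $p^{-1/2}$ and discarding the odd-order terms of the polynomials $G_r$ (as for \cref{exppsiquant}), this yields an expansion of the form \cref{gutzexp} with $p$-exponent $n-\tfrac{1}{2}(\dim N+2)=(\dim Y_j-1)/2$, in which $b_{j,0}$ is the product of $\Tr_{E}[\varphi^{E,-1}_{t_j}\tau^E_{t_j}]$ (the trivial $E$-factor), $\big(\det(\overline{\Pi}_{t_j}^0)^{-1}\tau_{t_j}^{K_X}\big)^{-1/2}$ (the leading coefficient $\bar\mu_{t_j}^{-1}$ of \cref{|J|0} and \cref{mubar}), $\det{}^{-1/2}_{N}[\,P^N(\Pi_0^{t_j}-d\varphi^{-1}_{t_j}\overline{\Pi}_{t_j}^0)(\Id_{TX}-d\varphi_{t_j})P^N\,]$ (the $Z_\perp$-Gaussian) and $|\xi_f|^{-1}_{g^{TX}_0}$ (the $(Z_u,s)$-stationary phase); integrating this density over $Y_j$ gives \cref{b0gutz}, and the action $\lambda_j$ enters through the constant value of $\varphi^L_{t_j}$ over $Y_j$, as in \cref{sctrth}.

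Finally, for \cref{Weylterm} I would specialize to the indices with $t_j=0$: there $\lambda_j=0$, $X^{\varphi_0}=X$, $\dim Y_j=2n-1$ and $Y_j$ is a connected component of $f^{-1}(c)$, so $N=\{0\}$, hence $\det{}_N[\cdots]=1$, while $\det(\overline{\Pi}_0^0)^{-1}\tau_0^{K_X}=1$ and $\Tr_{E}[\varphi^{E,-1}_0\tau^E_0]=\rk(E)$; summing \cref{b0gutz} over these components and using the identity $\Vol_\om(f^{-1}(c))=\int_{f^{-1}(c)}|dv|_{T\Sigma}/|\xi_f|_{g^{TX}_0}$ --- which follows from the coarea formula together with $dv_X=\om^n/n!$ --- gives the leading term $p^{n-1}g(0)\rk(E)\Vol_\om(f^{-1}(c))$, all other $Y_j$ contributing at strictly lower order in $p$. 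The main difficulty lies in the localization and stationary-phase step: one must combine the two localization mechanisms --- the off-diagonal decay of the parallel-transport kernel, at scale $p^{-\theta/2}$ around $X^{\varphi_t}$, and the oscillation in $t$ at frequency $p|f(x)-c|$, which localizes around $f^{-1}(c)$ --- uniformly over the compact families $Y_j$, and then perform a genuinely complex Gaussian integration in the mixed time-and-$f$-direction variables, where there is no real exponential decay and one must exploit the imaginary part of the model \cref{locmodT}. The care this requires is precisely that already present in the proofs of \cref{sctrth} and \cref{exppsiquant}, with one further integration variable.
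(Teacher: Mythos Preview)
Your proposal is correct and follows essentially the same route as the paper: localize the trace integral near $\bigcup_j\{t_j\}\times Y_j$ using the off-diagonal decay \cref{thetafla} together with the integration-by-parts argument of \cref{oscvener}, then plug in the local model \cref{expTau} and carry out a Gaussian/stationary-phase computation in the time and normal variables, exactly as in the proofs of \cref{sctrth} and \cref{exppsiquant}.

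The only noteworthy differences are organizational. The paper first invokes \cref{oscvener} directly to localize $x$ to a $p^{-\theta/2}$-tube $U(\epsilon p^{-\theta/2})$ around $f^{-1}(c)$, and only then uses \cref{thetafla} to localize $t$; your argument interleaves the two, which also works but is slightly less clean to make uniform. More substantively, the paper uses the transversality hypothesis to introduce the product coordinates $(u,x)\in\,]-\epsilon,\epsilon[\,\times\Sigma$ via the diffeomorphism $\Psi_{t_0}$ of \cref{idgal}, so that the mixed time--energy integral becomes the explicit iterated Gaussian \cref{Gfla4}, which equals $1$ independently of the normal variable $w$; the factor $|\xi_f|^{-1}$ then enters separately, through the Jacobian $\varrho$ of $dv_X=\varrho\,du\,dv_\Sigma$. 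Your packaging of this as a two-variable stationary-phase integral in $(s,Z_u)$ whose Hessian determinant is $\propto|\xi_f|^2$ is equivalent, but one must check that the $s$--$Z_\perp$ cross terms coming from \cref{locmodT} drop out after the $(s,Z_u)$-integration --- this is precisely the content of the computation \cref{Gfla4} in the paper, and is the point you flag at the end as the ``main difficulty''.
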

\begin{proof}
First note that replacing $f\in\cinf(X,\R)$ by $f-c$,
we are reduced to the case $c=0$.
Consider thus $f\in\cinf(X,\R)$ satisfying the hypotheses of
\cref{gutzwillerperorbintro} with $c=0$.
Using the trace
formula \cref{Trfla} and the definition of $\hat{g}(pQ_p(f))$ in \cref{psiquant} for all $p\in\N$, we know that
\begin{equation}\label{trpsiquant}
\begin{split}
\Tr\big[\hat{g}(pQ_p(f))\big]
&=\int_X \Tr\left[\hat{g}(pQ_p(f))(x,x)\right]dv_X(x)\\
&=\int_X\int_\R g(t)\Tr\left[
\varphi_{t,p}^{-1}\Tau_{p,t}(\varphi_t(x),x)\right]dt\,dv_X(x)\,.
\end{split}
\end{equation}
For any $\epsilon>0$, write
\begin{equation}
U(\epsilon):=f^{-1}(]-\epsilon,\epsilon\,[)\,.
\end{equation}
Then by \cref{oscvener}, for any $\theta\in\,]0,1[$ and $k\in\N$,
we get $C_k>0$ such that for all
$x\in X\backslash U(\epsilon p^{-\frac{\theta}{2}})$,
\begin{equation}\label{estUepsphastat}
\left|\hat{g}(pQ_p(f))(x,x)\right|_p\leq \frac{C_k}{\epsilon^k}
p^{n-\frac{k(1-\theta)}{2}}\,,
\end{equation}
so that in particular, we have as $p\fl+\infty$,
\begin{equation}\label{trpsiloc1}
\Tr\big[\hat{g}(pQ_p(f))\big]=
\int_{U(\epsilon p^{-\theta/2})}\Tr\left[
\hat{g}(pQ_p(f))(x,x)\right]dv_X(x)+O(p^{-\infty})\,.
\end{equation}
Recall that
$T:=\{t\in\Supp g~|~\exists\,x\in f^{-1}(0),~\varphi_t(x)=x\}$
is finite, and let $\epsilon>0$ be such that all
$u\in\,]-\epsilon,\epsilon[$ are regular values of $f$, so that
the Hamiltonian vector field $\xi_f$ does not vanish on 
the closure of $U(\epsilon)$. Then in the same way as in
\cref{locpath}, we get from the rapid decrease \cref{thetafla}
of $\Tau_{p,t}(\cdot,\cdot)$ outside the diagonal
that as $p\fl+\infty$,
\begin{multline}\label{gpfla1}
\Tr\big[\hat{g}(pQ_p(f))\big]=\\
\sum_{t_0\in T}
\int_{U(\epsilon p^{-\theta/2})}
\int_{t_0-\epsilon p^{-\theta/2}}^{t_0+\epsilon p^{-\theta/2}}
g(t)\Tr\left[\varphi_{t,p}^{-1}
\Tau_{p,t}(\varphi_t(x),x)\right]dt\,dv_X(x)+O(p^{-\infty})\,.
\end{multline}
Take $\epsilon>0$ small enough so that the identification
$\Psi_{t_0}$ of \cref{idgal} holds for any $t_0\in T$.
Recall from \cref{hamiltonian} that the Hamiltonian vector field
$\xi_f$ is tangent to the level sets of $f$. Then for any
$t_0\in T$, we get
diffeomorphisms $\varphi_{u,t}:\Sigma\fl\Sigma$, depending
smoothly on
$u\in\,]-\epsilon,\epsilon[$ and $t\in\,]t_0-\epsilon,t_0+\epsilon[$,
such that
\begin{equation}\label{varphigal}
\varphi_t(u,x)=(u,\varphi_{u,t}(x))~~\text{and}~~
\varphi_{u,t_0}(x)=x~~\text{for all}~~x\in Y_j\,,
\end{equation}
in the coordinates $(u,x)\in\Psi_{t_0}(U(\epsilon))$ of \cref{idgal}
and for all $1\leq j\leq m$ such that $t_j=t_0$.
For any $\epsilon>0$ and $1\leq j \leq m$,
consider the normal geodesic neighbourhood
$V_{j}(\epsilon)\subset\Sigma$ of
$Y_{j}$ inside $(\Sigma,g^{T\Sigma})$.
Then by the non-degeneracy assumption of \cref{nondegdef}
and as all $u\in\,]-\epsilon,\epsilon[$ are regular values of $f$,
the map $\Phi:(u,t,x)\mapsto(u,t,\varphi_{u,t}(x))$ is also
non-degenerate
around the fixed point set
$]-\epsilon,\epsilon[\,\times\,\{t_j\}\times Y_j$ inside
$]-\epsilon,\epsilon[\,\times\,\R\times\Sigma$,
so that working in local charts, we see that
there exists $\epsilon'>0$ such that
for all $\theta\in\,]0,1[$ and $p\in\N$, 
\begin{multline}\label{coord}
d^{\Sigma}(x,\varphi_{u,t}(x))>\epsilon' p^{-\theta/2}\quad~~
\text{as soon as}\\
(t,x)\,\notin\bigcup_{1\leq j\leq m}~
]t_j-\epsilon p^{-\theta/2},t_j+\epsilon p^{-\theta/2}[\,
\,\,\times\,\,
V_{j}(\epsilon p^{-\theta/2})\,,
\end{multline}
for all $u\in\,]-\epsilon,\epsilon[$.
On the other hand, as $f(x,u)=u$
in the coordinates $(u,x)\in\Psi_{t_j}(U(\epsilon))$ of \cref{idgal},
by the definition \cref{hamiltonian} of the Hamiltonian vector field
$\xi_f$ and the definition \cref{gTXintro}
of $g^{TX}_0$,
we get a function $\varrho\in\cinf(U(\epsilon),\R)$
such that over $U(\epsilon)$, we have
\begin{equation}
dv_X=\varrho\,du\,dv_\Sigma\quad\text{and}\quad
\varrho(0,x)=|\xi_{f,x}|^{-1}_{g^{TX}_0}\,
~~\text{for all}~~x\in\Sigma\,.
\end{equation}
We can then rewrite \cref{gpfla1} as
\begin{multline}\label{Gpfla0}
\Tr\big[\hat{g}(pQ_p(f))\big]=\\
\sum_{j=1}^m
\int_{V_{j}(\epsilon p^{-\theta/2})}
\int_{-\epsilon p^{-\frac{\theta}{2}}}
^{\epsilon p^{-\frac{\theta}{2}}}
\int_{-\epsilon p^{-\frac{\theta}{2}}}
^{\epsilon p^{-\frac{\theta}{2}}}
\Tr\left[I_{j,p}(t,u,x)\right]dt\,du\,dv_\Sigma(x)+O(p^{-\infty})\,,
\end{multline}
where in the coordinates $(u,x)\in\Psi_{t_j}(U(\epsilon))$ and
for any $t\in\,]t_j-\epsilon,t_j+\epsilon[$, we set
\begin{equation}\label{Gfla1}
I_{j,p}(t,u,x)=
g(t_j+t)
\varphi_{t_j+t,p}^{-1}\Tau_{p,t_j+t}((u,\varphi_{u,t_j+t}(x)),(u,x))
\varrho(u,x)\,.
\end{equation}
Recall that $N$ denotes the normal bundle of
$Y_{j}$ inside $\Sigma$ equipped with
the metric $g^N$ induced by $g^{T\Sigma}$,
and consider the natural identification
\begin{equation}\label{idBN}
V_{j}(\epsilon)\xrightarrow{~~\sim~~}B^N(\epsilon):=\{w\in N~|~
|w|_N<\epsilon\}\,.
\end{equation}
As $\varphi_{t_j}(x)=x$ implies $\varphi_{t_j}(\varphi_t(x))=\varphi_t(x)$ for all $t\in\R$ and $x\in X$ by the $1$-parameter
group property of $\varphi_t$, we know that its flow is transverse
to the fibres of
the ball bundle $B^N(\epsilon)$ via the identification \cref{idBN}
for $\epsilon>0$ small enough. 
We can then pick $x_0\in Y_{j}$ and $u\in\,]-\epsilon,\epsilon[$,
and consider the natural embedding
defined from the fibre $B^{N}_{x_0}(\epsilon)$ of the ball bundle
\cref{idBN} over $x_0$
and a neighbourhood $I\subset\R$ of $0$ by
\begin{equation}\label{coordU}
\begin{split}
I\times B^N_{x_0}(\epsilon)&\xrightarrow{~~\sim~~}W_u\subset\Sigma\\
(t,w)&\longmapsto\varphi_{u,t_j+t}(w)\,.
\end{split}
\end{equation}
We identify in turn $W_u$ with a subset of $T_{x_0}\Sigma$ via
the inclusion
\begin{equation}\label{coordTSig}
\begin{split}
I\times B^N_{x_0}(\epsilon)&\longrightarrow T_{x_0}\Sigma\\
(t,w)&\longmapsto w+t\xi_{f,u}\,,
~~\text{where}~~\xi_{f,u}:=\D{t}\varphi_{u,t}(x_0)\in T_{x_0}\Sigma\,.
\end{split}
\end{equation}
For any $u\in\,]-\epsilon,\epsilon[$,
we identify $L$ over $W_u$ with the pullback of $L$ over
$B^N_{x_0}(\epsilon)$ via parallel transport with respect
to $\nabla^L$ along flow lines of $t\mapsto\varphi_{u,t}$,
then trivialize $L$ over $B^N_{x_0}(\epsilon)$
via parallel transport with respect to $\nabla^L$ along radial
lines. Then $L^p$ is trivialized by the parallel transport $\tau_{t,p}$
along the flow lines of $\varphi_t$, and by
the exponentiation of Kostant formula \cref{alphaf}, we have
in this trivialization,
\begin{equation}\label{tauL2}
\varphi_{t,p}^{-1}=e^{2\i\pi ptu}~~\text{for all}~~|t|<\epsilon
~~\text{and}~~p\in\N\,.
\end{equation}
Now by the definition \cref{hamiltonian} of $\xi_f$
and as $\{u\}\times\Sigma$ corresponds to the level set $f^{-1}(u)$
via the identification $\Psi_{t_j}$ of \cref{idgal}, we know
that for any vector field $v\in\cinf(W_u,T\Sigma)$, we have
\begin{equation}
R^L\left(v,\xi_f\right)=\frac{2\pi}{\i}\om\left(v,\xi_f\right)=0\,.
\end{equation}
This shows that the trivialization of $L$ described above
coincides with a trivialization along radial lines
of $W_u$ in \cref{coordTSig}, for all $u\in\,]-\epsilon,\epsilon[$,
so that we are under the hypotheses of \cref{BTasy}.
As in \cref{actiondef} and using \cref{idgalYt}
under the identification $\Psi_{t_j}$ of \cref{idgal}, define the action $\lambda_j\in\R$ by
\begin{equation}
\varphi_{t_j,p}
=:e^{2\pi\i p\lambda_j}~~\text{over}~~
]-\epsilon,\epsilon[\,\times\,Y_{j}\,.
\end{equation}
By a standard computation,
which can be found for example in \cite[(1.2.31)]{MM07}
and which holds in any trivialization of $L$ along radial lines,
the connection $\nabla^L$ at $w\in B^{N}_{x_0}(\epsilon)$
inside $W_u\subset T_{x_0}\Sigma$ as in \cref{coordTSig} has the form
\begin{equation}\label{nablatriv}
\nabla^L=d+\frac{1}{2}R^L(w,.)+O(|w|^2)\,.
\end{equation}
Using this formula together with the fact that $\varphi^L$ preserves
$\nabla^L$, we get in our coordinates a smooth bounded function 
$\lambda_{x_0}$ of $u\in\,]-\epsilon,\epsilon[$ and
$w\in B^{N}_{x_0}(0,\epsilon)$ such that
\begin{equation}
\varphi_{t_j,p}^{-1}
=e^{-2\pi\i p\lambda_j}\exp(p|w|^3\lambda_{x_0}(u,w))
~~\text{over}~~
]-\epsilon,\epsilon[\,\times\,V_j(\epsilon)\,.
\end{equation}
We can then apply \cref{BTasy} in a chart of the form \cref{chart}
around $(u,x_0)$ and containing $W_u\subset T_{x_0}\Sigma\subset
T_{(u,x_0)}X$
via the identifications \cref{coordTSig} and \cref{idgal}, to get from
\cref{Gfla1} as $p\fl+\infty$,
\begin{multline}\label{Gfla1bis}
I_{j,p}(t,u,w)=p^n
e^{-2\pi\i p\lambda_{j}}g(t_j+t)
\exp(p|w|^3\lambda_{x_0}(u,w))
e^{2\i\pi p tu}\rho(u,w)\\
\varphi^{E,-1}_{t_j+t}\sum_{r=0}^{k-1} p^{-\frac{r}{2}}
G_r\cT_{t_j+t,(u,x_0)}(\sqrt{p}\varphi_{u,t_j+t}(w),
\sqrt{p}w)+ p^{n} O(p^{-\frac{k}{2}+\delta})\,,
\end{multline}
for all $t,\,u\in\R$
with $|t-t_j|,\,|u|<\epsilon p^{-\theta/2}$ and
$w\in B^N_{x_0}(\epsilon p^{-\theta/2})$.
Following \cref{Taylorexp}, we
apply the Taylor expansion described in \cref{Taylordef}
to the right hand side of
\cref{Gfla1bis} in $t,\,u$ and $w$,
inside and outside the exponential of
the local model \cref{locmodT} for $\cT_{t_j+t,(u,x_0)}$.
We then get a family
$\{F_{r,x_0}(t,u,w)\in E_{t_j,x_0}\otimes E_{0,x_0}^*\}_{r\in\N}$
of polynomials in
$w\in N_{x_0}$ and $t,\,u\in\R$, of the
same parity as $r$, depending smoothly in
$x_0\in Y_{j}$ and with
$F_{0,x_0}(t,u,w)=\bar{\mu}^{-1}_{t_j}(x_0)\tau_{t_j,x_0}^E$
for all $t,\,u$ and $w$,
such that for any $k\in\N$, there is $M_k\in\N^*$
such that as $p\fl+\infty$,
\begin{multline}\label{Gfla2}
I_{j,p}(t,u,w)
=p^ne^{-2\pi\i p\lambda_{j}}g(t_j)
|\xi_{f,x_0}|^{-1}_{g^{TX}_0}
\varphi^{E,-1}_{t_j,x_0}\sum_{r=0}^{k-1} p^{-\frac{r}{2}}
F_{r,x_0}(\sqrt{p}t,\sqrt{p}u,\sqrt{p}w)\\
e^{2\i\pi p tu}\cT_{t_j,x_0}(\sqrt{p}t\xi_{f,x_0}+
\sqrt{p}d\varphi_{t_j}.w,\sqrt{p}w)
+ p^{n-\frac{k}{2}+\delta} O(|\sqrt{p}t|^{M_k}+1)\,.
\end{multline}
Using the non-degeneracy assumption of
\cref{nondegdef} together with the explicit formula \cref{locmodT}
for the local model, we see that \cref{Gfla2}
decreases exponentially in $t\in\R$ and $w\in N_{x_0}$, but not
in $u\in\R$. Now to get an exponential decrease in
$u\in\R$, we will make the change of
variables $t\mapsto t/\sqrt{p}$ and $u\mapsto u/\sqrt{p}$,
integrate first with respect to $t\in\R$ and then with respect to
$u\in\R$. With this in mind, from the local model \cref{locmodT},
we get for any $t,\,u\in\R$ and $w\in N_{x_0}$,
\begin{multline}\label{Gfla3}
e^{2\i\pi tu}\cT_{t_j,x_0}(t\xi_{f,x_0}+d\varphi_{t_j}.w,w)=
\cT_{t_j,x_0}(d\varphi_{t_j}.w,w)\\
\exp\left[-\pi t^2\<\Pi_0^{t_j}\xi_{f,x_0},\xi_{f,x_0}\>-\pi t
\<(\Pi_0^{t_j}+(\Pi_0^{t_j})^*)\xi_{f,x_0},d\varphi_{t_j}.w-w\>
+2\i\pi tu\right]\,.
\end{multline}
Using the classical formula for the Fourier transform
of Gaussian integrals, we compute
\begin{equation}\label{Gfla4}
\begin{split}
\int_\R\Big(\int_\R \exp\Big[-\pi t^2
\<\Pi_0^{t_j}&\xi_{f,x_0},\xi_{f,x_0}\>\\
&-\pi t\<(\Pi_0^{t_j}+(\Pi_0^{t_j})^*)\xi,d\varphi_{t_j}.w-w\>
+2\i\pi tu\Big]dt\Big)du\\
=\<\Pi_0^{t_j}\xi_{f,x_0},\xi_{f,x_0}\>^{-\frac{1}{2}}&\\
\int_\R
\exp&\left[-\frac{\pi}{4}
\frac{\left(2u+\sqrt{-1}\<(\Pi_0^{t_j}+(\Pi_0^{t_j})^*)\xi_{f,x_0}
,d\varphi_{t_j}.w-w\>
\right)^2}{\<\Pi_0^{t_j}\xi_{f,x_0},\xi_{f,x_0}\>}\right]du\\
=\<\Pi_0^{t_j}\xi_{f,x_0},\xi_{f,x_0}\>^{-\frac{1}{2}}&\int_\R
\exp\left[-\pi
\frac{u^2}{\<\Pi_0^{t_j}\xi_{f,x_0},\xi_{f,x_0}\>}\right]du
=1\,,
\end{split}
\end{equation}
for a suitable choice of square root in the middle terms.
In particular, this computation shows that we
get an exponential decrease in $u\in\R$ after integration with respect
to $t\in\R$. Using successive integration by parts, this computation
readily generalizes to the case when the exponential
is multiplied by a polynomial $F_{r,x_0}(t,u,w)$
of the same parity as $r\in\N$, to get as a result a polynomial
$H_{r,x_0}(w)\in E_{t_j,x_0}\otimes E_{0,x_0}^*$ in $w\in N_{x_0}$
of the same parity as $r\in\N$, with
$H_{0,x_0}(w)=\bar{\mu}^{-1}_{t_j}(x_0)
\tau_{t_j,x_0}^E$ for all $w\in N_{x_0}$.
Thus from \cref{Gfla2}, for all $k\in\N$ we get $\delta_k\in\,]0,1[$
as in \cref{delta'} such that for all
$w\in B^N_{x_0}(\epsilon p^{-\theta/2})$ as $p\fl+\infty$,
\begin{multline}\label{Gpfla5}
\int_{-\epsilon p^{-\frac{\theta}{2}}}
^{\epsilon p^{-\frac{\theta}{2}}}
\int_{-\epsilon p^{-\frac{\theta}{2}}}
^{\epsilon p^{-\frac{\theta}{2}}}
\Tr\left[I_{j,p}(t,u,w)\right]dt\,du\\
=p^{-1}\int_{-\epsilon p^{\frac{1-\theta}{2}}}
^{\epsilon p^{\frac{1-\theta}{2}}}
\int_{-\epsilon p^{\frac{1-\theta}{2}}}
^{\epsilon p^{\frac{1-\theta}{2}}}
\Tr\left[I_{j,p}(p^{-1/2}t,p^{-1/2}u,w)\right]dt\,du\\
=p^{-1}\int_\R\left(\int_\R
\Tr\left[I_{j,p}(p^{-1/2}t,p^{-1/2}u,w)\right]\,dt\right)du
+O(p^{-\infty})\\
=p^{n-1}e^{-2\pi\i p\lambda_{j}}g(t_j)\sum_{r=0}^{k-1}
p^{-\frac{r}{2}}\Tr\left[\varphi^{E,-1}_{t_j,x_0}
H_{r,x_0}(\sqrt{p}w)\right]\\
\cT_{t_j,x_0}
(\sqrt{p}d\varphi_{t_j}.w,\sqrt{p}w)
+p^{n-1} O(p^{-\frac{k}{2}+\delta_k})\,.
\end{multline}
Working locally, we can suppose that $Y_{j}$ is orientable.
Let $dv_{Y_{j}}$ be the Riemannian volume form on
$Y_{j}$ induced by $g^{T\Sigma}$, let
$dw$ be the Euclidean volume form on the fibres of $(N,g^N)$
and let $\rho\in\cinf(B^N(\epsilon),\R)$ be
such that via the identification \cref{idBN} of $V_j(\epsilon)$ with
the ball bundle $B^N(\epsilon)$ over $Y_j$, we have
\begin{equation}
dv_{\Sigma}=\rho\,dw\,dv_{Y_{j}}\,~~\text{and}~~\rho(0,x)=1~~\text{for all}~~x\in Y_j\,.
\end{equation}
Through the change of variable
$w\mapsto w/\sqrt{p}$, using the exponential decrease
of \cref{Gpfla5} in $w\in N_{x_0}$ and
taking the Taylor expansion in $w\in N_{x_0}$ of $\rho(w,x_0)$
as described in \cref{Taylordef} for all $x_0\in Y_j$,
we then get
polynomials $K_{r,x_0}\in\C[w]$ of the same parity as $r\in\N$,
with $K_{0,x_0}(w)=\bar{\mu}^{-1}_{t_j}(x_0)
\Tr\left[\varphi^{E,-1}_{t_j,x_0}\tau_{t_j,x_0}^E \right]$,
such that using \cref{Gpfla5}, we can rewrite
\cref{Gpfla0} as
\begin{multline}\label{Gpfla6}
\Tr\big[\hat{g}(pQ_p(f))\big]=p^{(\dim Y_{j}-1)/2}\Big[
\int_{x\in Y_{j}}\Big(\int_{B^N_{x}(\epsilon
p^{(1-\theta)/2})}
e^{-2\pi\i p\lambda_{j}}g(t_j)\\
\rho(p^{-1/2}w,x)\sum_{r=0}^{k-1}
p^{-\frac{r}{2}}K_{r,x}(w)\cT_{t_j,x}(d\varphi_{t_j}.w,w)dw\Big)\,dv_{Y_{j}}(x)+O(p^{-\frac{k}{2}+\delta_k'})\,\Big]\\
=p^{(\dim Y_{j}-1)/2}\Big[
e^{-2\pi\i p\lambda_j}g(t_j)\sum_{q=0}^{\lfloor \frac{k-1}{2}\rfloor}
p^{-q}\\
\int_{x\in Y_j}\Big(\int_{N_x}
K_{2q,x}(w)\cT_{t_j,x}(d\varphi.w,w)dw\Big)\,
\frac{dv_{Y_j}}{|\xi_{f}|_{g^{TX}_0}}(x)
+O(p^{-\frac{k}{2}+\delta_k'})\Big]\,,
\end{multline}
for some $\delta_k'\in\,]0,1[$ satisfying $\delta_k'\fl\delta$ as
$\theta\fl 1$,
where all the terms with $r$ odd vanished
in the same way as in \cref{odd} by the odd parity of $K_{2q+1,x}$
for all $q\in\N$.
This shows the expansion \cref{gutzexp}, and formula \cref{b0gutz}
follows from \cref{Gpfla6} and the second equality of
\cref{nu0}. Finally, formula \cref{Weylterm}
follows from the fact that function
$\mu_t\in\cinf(X,\C)$ of \cref{g0t} is constant equal to $1$
for $t=0$ and the fact that the vector field $v\in\cinf(f^{-1}(c),TX)$
defining the volume form \cref{liouvilleham}
over $f^{-1}(c)$ can be chosen to
be $v=J_0\xi_f/|\xi_f|_{g^{TX}_0}^2$.
\end{proof}
%

%
%

Under the assumptions of \cref{gutzwillerperorbintro}, consider
now the case
when $Y_j\subset\Sigma$ satisfy $\dim Y_j=1$ for some
$1\leq j\leq m$, so that
$Y_j=\{\varphi_s(x)\}_{0\leq s< t(Y_j)}$,
for some $x\in f^{-1}(c)$ satisfying
$\varphi_t(x)=x$. Here $t(Y_j)>0$ is the smallest time
$t>0$ for which $\varphi_t(x)=x$, called
the \emph{primitive period} of $Y_j$.
We then get the following special case of \cref{gutzwiller},
recovering the explicit geometric term
associated with isolated periodic orbits in the Gutzwiller
trace formula of \cref{b0gutzperorbintroth}.

\begin{theorem}\label{gutzwillerperorb}
Consider the hypotheses of \cref{gutzwillerperorbintro},
and let $1\leq j\leq m$ be such that $\dim Y_j=1$
and such that $[J_0\xi_f,\xi_f]=0$ over $Y_j$.
Then
the term $b_{j,0}\in\C$ of \cref{b0gutz} is given by
\begin{equation}\label{b0gutzper}
b_{j,0}=(-1)^{\frac{n-1}{2}}\int_{Y_j}
\frac{(\varphi^{K_X,-1}_{t_j}\tau^{K_X}_{t_j}
)^{-\frac{1}{2}}\Tr[\varphi^{E,-1}_{t_j}\tau^{E}_{t_j}]}
{|\det{}_{N}(\Id_{N}-d\varphi_{t_j}|_N)|^{1/2}}\,
\frac{|dv|_{Y_j}}{|\xi_{f}|_{g^{TX}_0}}\,,
\end{equation}
for some natural choices of square roots.
If $X$ admits a metaplectic structure \cref{metafla}
and taking $E=K_X^{1/2}$
to be the associated metaplectic correction, then formula
\cref{b0gutzper} becomes
\begin{equation}\label{b0gutzpermeta}
b_{j,0}=(-1)^{\frac{n-1}{2}}
\frac{t(Y_j)}
{|\det_{N_x}(\Id_{N}-d\varphi_{t_j}|_N)|^{1/2}}
\,,
\end{equation}
not depending on $x\in Y_j$.
\end{theorem}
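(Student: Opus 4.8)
The plan is to start from the general formula \eqref{b0gutz} for $b_{j,0}$ and to simplify its integrand pointwise along the orbit $Y_j$ by means of \cref{loctrcT}, then to integrate over $Y_j$. The first task is to verify that the linear-algebraic invariance hypothesis of \cref{loctrcT} holds at every point of $Y_j$. Since $c$ is a regular value, $\xi_f$ does not vanish on $\Sigma=f^{-1}(c)$, and since $\dim Y_j=1$, the transversality of $X^{\varphi_{t_j}}$ with $f^{-1}(c)$ (part of the hypotheses of \cref{gutzwillerperorbintro}) forces $\dim X^{\varphi_{t_j}}=2$ near $Y_j$. From \cref{hamiltonian} and \cref{Hamflow} we have $d\varphi_s.\xi_f=\xi_f$ and $f\circ\varphi_s=f$, so along $Y_j$ the vector $\xi_f$ lies in both $TX^{\varphi_{t_j}}$ and $T\Sigma=\Ker df$, while $J_0\xi_f$ is, up to sign, the $g^{TX}_0$-gradient of $f$. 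The assumption $[J_0\xi_f,\xi_f]=0$ over $Y_j$ means $(\varphi_s)_*(J_0\xi_f)=J_0\xi_f$, hence $d\varphi_{t_j}.(J_0\xi_f)=J_0\xi_f$ along $Y_j$; combined with $\dim X^{\varphi_{t_j}}=2$ this gives $TX^{\varphi_{t_j}}=\Span\{\xi_f,J_0\xi_f\}$ over $Y_j$, a $J_0$-invariant $2$-plane field.

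Next I would identify the normal bundle $N$ of $Y_j$ in $\Sigma$ with the $g^{TX}_0$-orthogonal (equivalently, symplectic) complement of $TX^{\varphi_{t_j}}$ over $Y_j$: indeed $T\Sigma=\{v:\om(\xi_f,v)=0\}$ by \cref{hamiltonian}, and $v\in T\Sigma$ is $g^{TX}_0$-orthogonal to $TY_j=\Span\{\xi_f\}$ precisely when $\om(J_0\xi_f,v)=0$, so $N=\{v:\om(\xi_f,v)=\om(J_0\xi_f,v)=0\}$ is the symplectic orthogonal of $TX^{\varphi_{t_j}}=\Span\{\xi_f,J_0\xi_f\}$, which is transverse to $TX^{\varphi_{t_j}}$ since the latter is a symplectic subspace. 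Because $\varphi_{t_j}$ preserves $\om$ and restricts to the identity on $TX^{\varphi_{t_j}}$ over $Y_j$, it preserves $N$, and because $J_0$ preserves $\om$ and $TX^{\varphi_{t_j}}$ is $J_0$-invariant, so is $N$. Thus the pointwise identity \eqref{loctrcTfla} of \cref{loctrcT}, whose verification at a point $x$ only uses the invariance of $T_xX^{\varphi_{t_j}}$ and $N_x$ under $J_0$ and $d\varphi_{t_j}$, applies at each $x\in Y_j$, with $\dim N_x=2n-2$ so that $(-1)^{\dim N_x/4}=(-1)^{(n-1)/2}$. Rewriting the factor $\det{}^{-1/2}_N[P^N(\cdot)P^N]$ in \eqref{b0gutz} as the Gaussian integral $\int_{N_x}\cT_{t_j,x}(d\varphi_{t_j}.Z,Z)\,dZ$ via the second equality in \eqref{nu0} and then substituting \eqref{loctrcTfla}, the integrand of \eqref{b0gutz} becomes $(-1)^{(n-1)/2}(\varphi^{K_X,-1}_{t_j}\tau^{K_X}_{t_j})^{-1/2}\,|\det{}_{N}(\Id_{N}-d\varphi_{t_j}|_N)|^{-1/2}\,\Tr_E[\varphi^{E,-1}_{t_j}\tau^E_{t_j}]\,|dv|_{Y_j}/|\xi_f|_{g^{TX}_0}$, which is precisely \eqref{b0gutzper}.

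For the metaplectic case $E=K_X^{1/2}$, I would use that $E$ has rank one with $E^2=K_X$, so $\Tr_E[\varphi^{E,-1}_{t_j}\tau^E_{t_j}]=\varphi^{E,-1}_{t_j}\tau^E_{t_j}$ and $(\varphi^{E,-1}_{t_j}\tau^E_{t_j})^2=\varphi^{K_X,-1}_{t_j}\tau^{K_X}_{t_j}$; choosing the square root of $\varphi^{K_X,-1}_{t_j}\tau^{K_X}_{t_j}$ to be the one induced by the metaplectic lift $\varphi^E$ makes $(\varphi^{K_X,-1}_{t_j}\tau^{K_X}_{t_j})^{-1/2}\Tr_E[\varphi^{E,-1}_{t_j}\tau^E_{t_j}]=1$, so \eqref{b0gutzper} reduces to $(-1)^{(n-1)/2}\int_{Y_j}|\det{}_{N}(\Id_{N}-d\varphi_{t_j}|_N)|^{-1/2}\,|dv|_{Y_j}/|\xi_f|_{g^{TX}_0}$. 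Parametrizing $Y_j$ by $s\mapsto\varphi_s(x)$ for $s\in[0,t(Y_j))$ turns $|dv|_{Y_j}/|\xi_f|_{g^{TX}_0}$ into $ds$, so that density integrates to $t(Y_j)$, while $|\det{}_{N}(\Id_{N}-d\varphi_{t_j}|_N)|$ is constant along $Y_j$: since $\varphi_{t_j}$ commutes with $\varphi_s$, the differential $d\varphi_s$ conjugates $d\varphi_{t_j}|_{T_xX}$ to $d\varphi_{t_j}|_{T_{\varphi_s(x)}X}$ and maps $N_x$ onto $N_{\varphi_s(x)}$ (using that $\varphi_s$ preserves $\om$, $\Sigma$, the orbit $Y_j$, and hence the symplectic normal $N$). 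Pulling this constant out of the integral yields \eqref{b0gutzpermeta}.

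I expect the main obstacle to be the bookkeeping around \cref{loctrcT}: justifying that its invariance hypothesis genuinely holds along $Y_j$ — which is exactly the purpose of assuming $[J_0\xi_f,\xi_f]=0$ over $Y_j$ together with the transversality forcing $\dim X^{\varphi_{t_j}}=2$ — and carefully tracking the compatible choices of square roots through \eqref{nu0}, \eqref{loctrcTfla}, and the metaplectic reduction, so that the sign $(-1)^{(n-1)/2}$ and all normalizations come out exactly as stated.
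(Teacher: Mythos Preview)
Your proposal is correct and follows essentially the same route as the paper: verify the invariance hypotheses of \cref{loctrcT} along $Y_j$ using $[J_0\xi_f,\xi_f]=0$, substitute \eqref{loctrcTfla} into the integrand of \eqref{b0gutz}, and then handle the metaplectic case by showing $|\det_{N}(\Id_N-d\varphi_{t_j}|_N)|$ is constant along $Y_j$ via conjugation by $d\varphi_s$. Your write-up is in fact more explicit than the paper's in identifying $TX^{\varphi_{t_j}}=\Span\{\xi_f,J_0\xi_f\}$ and in checking that both this plane and its symplectic orthogonal $N$ are $J_0$-invariant, which the paper leaves implicit.
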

\begin{proof}
The assumption $[J_0\xi_f,\xi_f]=0$ over $Y_j$ means that
$d\varphi_t.\,J\xi_f=J\xi_f$ over $Y_j$
for all $t\in\R$. As $d\varphi_{t}.\xi_f=\xi_f$
by definition and as $d\varphi_t$ preserves
the symplectic form $\om$, by definition \cref{gTXintro}
of $g^{TX}_0$, this implies that $d\varphi_t$
preserves the normal bundle $N\subset T\Sigma$ of $Y_j$
inside $\Sigma$, for all $t\in\R$.
We are then under the assumptions of \cref{loctrcT}, and
formula \cref{b0gutzper} is a consequence of \cref{gutzwiller}.
%

Note now that from
the $1$-parameter group property of $\varphi_t$,
for any $t\in\R$ and $x\in Y_j$, we have
\begin{equation}
d\varphi_{t_j,\varphi_t(x)}=d\varphi_{t,x}d\varphi_{t_j,x}
d\varphi_{t,x}^{-1}\,.
\end{equation}
This shows that the quantity
$\det{}_{N_x}(\Id_{N}-d\varphi_{t_j}|_N)$ is actually independent
of $x\in Y_j$. Considering the case $E=K_X^{1/2}$ and
recalling that $\xi_f$ is the tangent vector field of the curve
$Y_j$, we get
\begin{equation}
\int_{Y_j}(\varphi_{t_j}^{K_X,-1}\tau_{t_j}^{K_X}
)^{-\frac{1}{2}}\Tr_{E}[\varphi^{E,-1}_{t_j}\tau^{E}_{t_j}]\frac{|dv|_{Y_j}}
{|\xi_f|_{g^{TX}_0}}=\int_{Y_j}\frac{|dv|_{Y_j}}{|\xi_f|_{g^{TX}_0}}
=t(Y_j)
\,.
\end{equation}
This gives formula \cref{b0gutzpermeta}.
\end{proof}
%

Let us now show how these results can be applied to
contact topology. To that end, consider $f\in\cinf(X,\R)$
and an almost complex structure
$J\in\End(TX)$
satisfying
%
%
%
\begin{equation}\label{contactfla}
d\iota_{J\xi_f}\om=\om~~\text{over}~~f^{-1}(I)\,,
\end{equation}
over some interval $I\subset\R$ of regular values of $f$.
Then considering the Riemannian metric $g^{TX}=\om(\cdot,J\cdot)$
as in \cref{gTXintro}, the form
\begin{equation}\label{contactform}
-\frac{\iota_{J\xi_f}\om}{|\xi_f|_{g^{TX}}^2}\in\Om^1(X,\R)
\end{equation}
restricts to
a \emph{contact form} $\alpha\in\Om^1(\Sigma,\R)$
over $\Sigma:=f^{-1}(c)$ for any $c\in I$, meaning that
$\alpha\wedge d\alpha^{n-1}$ is a volume form over $\Sigma$.
The restriction of the Hamiltonian
vector field $\xi_f\in\cinf(X,TX)$ over $f^{-1}(c)$
induces the \emph{Reeb vector field} of $(\Sigma,\alpha)$,
which is the unique vector field $\xi\in\cinf(\Sigma,T\Sigma)$
satisfying
\begin{equation}\label{Reebdef}
\left\{
\begin{array}{l}
  \iota_\xi \alpha=1\,, \\
  \\
  \iota_\xi d\alpha = 0\,.
\end{array}
\right.
\end{equation}
The corresponding \emph{Reeb flow} is the flow of diffeomorphisms
$\varphi_t:\Sigma\fl\Sigma$ generated by $\xi$, and its periodic
orbits are called the \emph{Reeb orbits} of $(\Sigma,\alpha)$.
An isolated Reeb orbit of period $t_0\in\R$
is said to be \emph{non-degenerate} if it satisfies \cref{nondegdef}
as a fixed point set of $\varphi_{t_0}$ inside $\Sigma$.

Conversely, given a compact manifold $\Sigma$ endowed with
a contact form $\alpha\in\Om^1(\Sigma,\R)$, we define
a symplectic form $\om^{S\Sigma}\in\Om^2(S\Sigma,\R)$
over $S\Sigma:=\R\times\Sigma$ by the formula
\begin{equation}\label{symplectization}
\begin{split}
\om^{S\Sigma}:=-d(e^{u}\pi^*\alpha)\,,
~~\quad\text{with}~~\quad
\pi:S\Sigma:=\R\times\Sigma&\longrightarrow\Sigma
\,.\\
(u,x)&\longmapsto x
\end{split}
\end{equation}
The symplectic manifold $(S\Sigma,\om^{S\Sigma})$ is called
the \emph{symplectization} of $(\Sigma,\alpha)$. Consider the function
$f\in\cinf(S\Sigma,\R)$ defined by
\begin{equation}\label{fu=u}
f(u,x):=e^u~~\text{for all}~~(u,x)\in\R\times\Sigma\,.
\end{equation}
Then its Hamiltonian
vector field $\xi_f\in\cinf(S\Sigma,TS\Sigma)$
restricts over any level set of $f$ to the
Reeb vector field $\xi\in\cinf(\Sigma,T\Sigma)$ of $(\Sigma,\alpha)$.
On the other hand, consider
an almost complex structure $J\in\End(TS\Sigma)$
compatible with $\om^{S\Sigma}$ satisfying
\begin{equation}\label{Jxi=du}
J\xi_f=\D{u}\,,
\end{equation}
in the coordinates $(u,x)\in\R\times\Sigma=S\Sigma$.
Such an almost complex structure always exists.
Finally, consider the situation when an open set of the form
$I\times\Sigma\subset S\Sigma$
can be symplectically embedded
into a compact prequantized symplectic manifold $(X,\om)$ without
boundary.
We can then extend the Hamiltonian and the almost complex
structure defined by \cref{fu=u} and \cref{Jxi=du}
to the whole of $(X,\om)$ using partitions of unity.
The typical case when such an embedding
exists is when $\Sigma$ is the boundary of a star-shaped domain
in $\R^{2n}$, with contact form $\alpha\in\Om^1(\Sigma,\R)$
given by the restriction of the standard \emph{Liouville form}
$\lambda\in\Om^1(\R^{2n},\R)$. Then using a Darboux chart,
an open set of the form
$I\times\Sigma\subset S\Sigma$ can always
be symplectically embedded in $(X,\om)$.
More generally, by the results of
\cite[Th.\,1.3]{EH02} and
\cite[Cor.\,1.11]{Laz18}, any \emph{fillable} contact manifold
satisfies this property.
The following result then shows how to use the above picture to
extract informations on isolated non-degenerate Reeb orbits
of $(\Sigma,\alpha)$ from the geometric quantization of $(X,\om)$.

\begin{prop}\label{contactcor}
Let $\Sigma$ be a compact manifold without boundary
endowed with a contact form $\alpha\in\Om^1(\Sigma,\R)$, and assume
that an open set of the form $I\times\Sigma$ in its symplectization
$(S\Sigma,\om^{S\Sigma})$
can be symplectically embedded in a compact prequantized
symplectic manifold $(X,\om)$ without boundary.
Consider the Hamiltonian $f\in\cinf(X,\R)$ and
the almost complex structure $J\in\End(TX)$ defined via \cref{fu=u}
and \cref{Jxi=du}.

Then for any $c\in I$, \cref{gutzwillerperorbintro}
holds as soon as the fixed point set of the Reeb flow
$\varphi_t:\Sigma\fl\Sigma$ is non-degenerate for all $t\in\Supp g$,
and \cref{b0gutzperorbintroth} holds for the
non-degenerate isolated Reeb orbits of $(\Sigma,\alpha)$
in case $(X,\om)$ is endowed with a metaplectic structure.
\end{prop}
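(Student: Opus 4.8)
The plan is to deduce the statement from \cref{gutzwillerperorbintro} and \cref{b0gutzperorbintroth} by checking that, in the contact situation, all the hypotheses of those results are automatically fulfilled, and then invoking them verbatim. First I would set up the extension carefully: shrinking $I$ around $c$ before extending by partitions of unity, arrange that the resulting $f\in\cinf(X,\R)$ and $J\in\End(TX)$ coincide with the symplectization model on a fixed open neighborhood $U$ of $f^{-1}(c)$ in $X$, so that $f(u,x)=e^u$ and $J\xi_f=\partial_u$ on $U$ and $f^{-1}(c)\subset U$ is the slice $\{u=\log c\}\times\Sigma\cong\Sigma$. On $U$, $df=e^u\,du$ is nowhere zero, so $c$ is a regular value of $f$; and since a Hamiltonian vector field is tangent to the level sets of its generator, $\xi_f$ is tangent to the slices of $U$, so by the discussion following \cref{fu=u} it coincides there with the (pulled-back, $u$-independent) Reeb field $\xi$ of $(\Sigma,\alpha)$. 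Consequently the integral curves of $\xi_f$ through $U$ remain in their slice, and the Hamiltonian flow \cref{Hamflow} restricts on $U$ to $\varphi_t(u,x)=(u,\psi_t(x))$ for all $t\in\R$, where $\psi_t\colon\Sigma\to\Sigma$ is the Reeb flow.

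Then I would check the non-degeneracy and transversality requirements. With respect to the product splitting $T_{(u,x)}U=\R\partial_u\oplus T_x\Sigma$ one has $d\varphi_t=\Id\oplus d\psi_t$; hence along $U\cap X^{\varphi_t}$, $\Ker(\Id-d\varphi_t)=\R\partial_u\oplus\Ker(\Id-d\psi_t)$, while $X^{\varphi_t}\cap U$ is the product of the $u$-interval with $\Sigma^{\psi_t}$ and so has tangent space $\R\partial_u\oplus T\Sigma^{\psi_t}$. Thus condition \cref{nondegfla} for $X^{\varphi_t}$ over $U$ is equivalent to $T_x\Sigma^{\psi_t}=\Ker(\Id-d\psi_{t,x})$, i.e.\ to non-degeneracy of the Reeb fixed set $\Sigma^{\psi_t}$ for $t\in\Supp g$, which is precisely the standing hypothesis. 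The intersection with $f^{-1}(c)$ is automatically transversal, since $\partial_u\in TX^{\varphi_t}$ while $T_{(u,x)}f^{-1}(c)=\{0\}\oplus T_x\Sigma$; and under $f^{-1}(c)\cong\Sigma$ it identifies with $\Sigma^{\psi_t}$.

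The one point that demands a genuine argument — and which I expect to be the main obstacle — is the finiteness of $T=\{t\in\Supp g:\Sigma^{\psi_t}\neq\emptyset\}$, i.e.\ the discreteness of the period spectrum of $(\Sigma,\alpha)$ near any element at which the fixed set is non-degenerate. I would argue as follows. Suppose $t_n\to t_*$ in $\Supp g$ with $t_n\neq t_*$ and $\psi_{t_n}(x_n)=x_n$; passing to a subsequence, $x_n\to x_*$ with $\psi_{t_*}(x_*)=x_*$, and by hypothesis $Z:=\Sigma^{\psi_{t_*}}$ is a non-degenerate submanifold. For $(t,x)$ near $(t_*,x_*)$ with $\psi_t(x)=x$ we have, by the group law and a first-order Taylor expansion in time, $\psi_{t_*}(x)-x=\psi_{t_*-t}(\psi_t(x))-x=\psi_{t_*-t}(x)-x=(t_*-t)\xi(x)+O(|t-t_*|^2)$. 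Now $Z$ is $\psi$-invariant, hence $\xi$ is tangent to $Z$, so the component of $(t_*-t)\xi(x)$ normal to $Z$ has size $O(|t-t_*|\,\mathrm{dist}(x,Z))$; on the other hand, non-degeneracy ($\Ker(\Id-d\psi_{t_*})=TZ$) makes $\Id-d\psi_{t_*}$ injective on directions normal to $Z$, so the normal component of $\psi_{t_*}(x)-x$ has size $\geq c\,\mathrm{dist}(x,Z)$ for a fixed $c>0$. Comparing the two yields $\mathrm{dist}(x,Z)=O(|t-t_*|^2)$, hence $\psi_{t_*}(x)-x=O(|t-t_*|^2)$, and therefore $(t_*-t)\xi(x)=O(|t-t_*|^2)$; since $\xi$ is nowhere zero this forces $t=t_*$, a contradiction for $n$ large. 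Covering the compact manifold $Z$ by finitely many such neighborhoods then shows $\Sigma^{\psi_t}=\emptyset$ for $t$ in a punctured neighborhood of $t_*$, so $\{t:\Sigma^{\psi_t}\neq\emptyset\}$ is discrete and $T$, its intersection with the compact set $\Supp g$, is finite. With all three conditions verified, \cref{gutzwillerperorbintro} applies and gives the first assertion.

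For the second assertion, assume $(X,\om)$ admits a metaplectic structure, and let $Y_j$ correspond to a non-degenerate isolated Reeb orbit $\gamma\subset\Sigma$, fixed by $\psi_{t_j}$; then $Y_j\cong\gamma$ is a circle, so $\dim Y_j=1$. Over $U\supset f^{-1}(c)\supset Y_j$ we have $J\xi_f=\partial_u$ and $\xi_f=\xi$, and $\xi$ is the pullback of a vector field on $\Sigma$, so $[J\xi_f,\xi_f]=[\partial_u,\xi]=0$ over $Y_j$. Thus the hypotheses of \cref{b0gutzperorbintroth} hold, and its conclusion \cref{b0gutzperorbintro} — read with $N$ the normal bundle of $\gamma$ inside $f^{-1}(c)\cong\Sigma$, $d\varphi_{t_j}|_N$ the linearized Reeb return map (which, since $d\varphi_{t_j}$ fixes $\xi$, has the same characteristic polynomial as the return map on $\ker\alpha_x$), and $t(Y_j)$ the primitive period of $\gamma$ — is exactly the stated formula, which is therefore established.
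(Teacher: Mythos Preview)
Your overall strategy—verify the hypotheses of \cref{gutzwillerperorbintro} and \cref{b0gutzperorbintroth} and invoke them—is exactly the paper's. The non-degeneracy and transversality checks via the product splitting $d\varphi_t=\Id\oplus d\psi_t$ are correct, and your verification that $[J\xi_f,\xi_f]=[\partial_u,\xi]=0$ matches the paper's.

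The gap is in your discreteness argument for $T$. You write that non-degeneracy ``makes $\Id-d\psi_{t_*}$ injective on directions normal to $Z$, so the normal component of $\psi_{t_*}(x)-x$ has size $\geq c\,\mathrm{dist}(x,Z)$''. The injectivity is correct, but it is injectivity of the map $N_y\to T_y\Sigma$, and does \emph{not} imply that the $N$-component of $(\Id-d\psi_{t_*})|_N$ is injective. Concretely, take $\Sigma=S^1\times(-\epsilon,\epsilon)$ with coordinates $(\theta,s)$ and $\xi=(1+s)\partial_\theta$: then $Z=\Sigma^{\psi_{2\pi}}=\{s=0\}$ is non-degenerate, yet the normal ($\partial_s$) component of $\psi_{2\pi}(x)-x=(2\pi s,0)$ vanishes identically, and the period spectrum $\{2\pi/(1+s)\}$ is an interval, not discrete. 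So discreteness does \emph{not} follow from non-degeneracy and $\xi\neq0$ alone, and your chain of inequalities breaks at exactly this step.

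What the paper uses instead is the contact structure: since the Reeb flow preserves $\alpha$, the linearization $d\psi_t$ preserves $\Ker\alpha$ and fixes $\xi$, so $\mathrm{Im}(\Id-d\psi_t)\subset\Ker\alpha$ and hence $\xi_x\notin\mathrm{Im}(\Id-d\psi_t)$ at every periodic point. For the map $\Gamma(t,x)=(t,\psi_t(x))$ on $\R\times\Sigma$ this gives $\Ker(\Id-d\Gamma)_{(t_*,z)}=\{0\}\times\Ker(\Id-d\psi_{t_*})=\{0\}\times T_zZ$ for all $z\in Z$, i.e.\ $\{t_*\}\times Z$ is a \emph{non-degenerate} fixed-point set of $\Gamma$; the standard argument (linearize in tubular coordinates around $\{t_*\}\times Z$) then shows $\mathrm{Fix}(\Gamma)=\{t_*\}\times Z$ locally, giving the discreteness of $T$. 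Your argument is easily repaired by inserting this use of $\alpha$.
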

\begin{proof}

Recall that the Hamiltonian vector field
$\xi_f\in\cinf(S\Sigma,TS\Sigma)$ of
$f\in\cinf(S\Sigma,\R)$ defined by \cref{fu=u}
restricts to the Reeb vector field $\xi\in\cinf(\Sigma,T\Sigma)$
of $(\Sigma,\alpha)$ over $\{u\}\times\Sigma$, for all $u\in\R$.
Then the Hamiltonian
flow $\varphi_t:X\fl X$ of $f$ over $I\times\Sigma$
is of the form $\varphi_t(u,x)=(u,\varphi_t(x))$ for all
$(u,x)\in I\,\times\,\Sigma$. It thus satisfies the hypotheses of
\cref{gutzwillerperorbintro} as long as the fixed point set of
the Reeb flow $\varphi_t:\Sigma\fl\Sigma$ is non-degenerate
for all $t\in\Supp g$, the discreteness of $T\subset\Supp g$
following from the fact that $d\varphi_t$ preserves
$\Ker\alpha\subset T\Sigma$ for all $t\in\R$,
so that by \cref{Reebdef} there is no $v\in\Ker\alpha_x$
such that $\xi_x+d\varphi_{t}.v=v$ for any $x\in\Sigma$
satisfying $\varphi_t(x)=x$.
On the other hand, the formula \cref{Jxi=du} shows
\begin{equation}
[J\xi_f,\xi_f]=\Big[\D{u}\,,\,\xi\Big]=0~~\text{over}
~~\R\times\Sigma\,,
\end{equation}
so that the hypotheses of \cref{b0gutzperorbintroth} are
satisfied as well. This shows the result.
\end{proof}

\bibliographystyle{amsplain}
\providecommand{\bysame}{\leavevmode\hbox to3em{\hrulefill}\thinspace}
\providecommand{\MR}{\relax\ifhmode\unskip\space\fi MR }
\providecommand{\MRhref}[2]{%
  \href{http://www.ams.org/mathscinet-getitem?mr=#1}{#2}
}
\providecommand{\href}[2]{#2}

\ \\
Tel Aviv University - School of Mathematical Sciences,\\
Ramat Aviv, Tel Aviv 69978, Israël\\
\\
\emph{E-mail adress}: louisioos@mail.tau.ac.il
\end{document}